\newtheorem{theorem}{Theorem}[section]
\newtheorem{corollary}[theorem]{Corollary}
\newtheorem{lemma}[theorem]{Lemma}
\newtheorem{proposition}[theorem]{Proposition}
\newtheorem{remark}[theorem]{Remark}
\newcommand{\pow}[1]{^{[\natural #1]}}
\newcommand{\dpow}[1]{^{\natural #1}}
\newcommand{\diam}{{\rm diam}}
\newcommand{\rad}{{\rm rad}}
\newcommand{\cp}{\,\square\,}
\begin{document}

\title{Exact distance graphs of product graphs}

\author{
Boštjan Brešar $^{a,b}$
\and
Nicolas Gastineau $^{c}$
\and
Sandi Klav\v zar\footnote{corresponding author} $^{,a,b,d}$
\and
Olivier Togni $^{c}$
}

\date{\today}

\maketitle

\begin{center}
$^a$ Faculty of Natural Sciences and Mathematics, University of Maribor, Slovenia\\
{\tt bostjan.bresar@um.si}
\medskip

$^b$ Institute of Mathematics, Physics and Mechanics, Ljubljana, Slovenia\\
\medskip

$^c$ Laboratoire LE2I, Université de Bourgogne Franche-Comté, France\\
{\tt Nicolas.Gastineau@u-bourgogne.fr} \\
{\tt olivier.togni@u-bourgogne.fr}
\medskip

$^d$ Faculty of Mathematics and Physics, University of Ljubljana, Slovenia\\
{\tt sandi.klavzar@fmf.uni-lj.si}
\end{center}

\begin{abstract}
Given a graph $G$, the exact distance-$p$ graph $G^{[\natural p]}$ has $V(G)$ as its vertex set, and two vertices are adjacent whenever the distance between them in $G$ equals $p$. We present formulas describing the structure of exact distance-$p$ graphs of the Cartesian, the strong, and the lexicographic product. We prove such formulas for the exact distance-$2$ graphs of direct products of graphs. We also consider infinite grids and some other product structures. We characterize the products of graphs of which exact distance graphs are connected. The exact distance-$p$ graphs of hypercubes $Q_n$ are also studied. As these graphs contain generalized Johnson graphs as induced subgraphs, we use some known and find some new constructions of their colorings. These constructions are applied for colorings of the exact distance-$p$ graphs of hypercubes with the focus on the chromatic number of $Q_{n}^{[\natural p]}$ for $p\in \{n-2,n-3,n-4\}$.
\end{abstract}

\noindent\medskip
{\bf Keywords:} exact distance graph; graph product; connectivity; hypercube; generalized Johnson graph; generalized Kneser graph \\

\noindent\medskip
{\bf AMS Subj.\ Class.\ (2010)}: 05C12, 05C15, 05C76\\

\noindent\medskip
{\bf Running head}: Exact distance graphs of products
\maketitle

\newpage

\baselineskip16pt
%%%%%%%%%%%%%%%%%%%%%%%%%%%%%%%%%%%%%%%%%%%%%%%
%%%%%%%%%%%%%%%%%%%%%%%%%%%%%%%%%%%%%%%%%%%%%%%
\section{Introduction}
%%%%%%%%%%%%%%%%%%%%%%%%%%%%%%%%%%%%%%%%%%%%%%%
%%%%%%%%%%%%%%%%%%%%%%%%%%%%%%%%%%%%%%%%%%%%%%%

Ne\v set\v ril and Ossona de Mendez introduced in~\cite[Section 11.9]{NESET1} the concept of exact distance-$p$ graph, where $p$ is a positive integer, as follows. If $G$ is a graph, then the {\em exact distance-$p$ graph} $G\pow{p}$ of $G$ is the graph with $V(G\pow{p})=V(G)$ and two vertices in $G\pow{p}$ are adjacent if and only if they are at distance exactly $p$ in $G$. Note that $G\pow{1}=G$.  

The main focus in earlier investigations of exact distance graphs was on their chromatic number. One of the main reasons for this interest is the problem asking whether there exists a constant $C$ such that for every odd integer $p$ and every planar graph $G$ we have $\chi(G\pow{p}) \le C$. The problem that was explicitly stated in~\cite[Problem 11.1]{NESET1} and attributed to van den Heuvel and Naserasr (see also~\cite[Problem 1]{NESET2}) has been very recently answered in negative by considering the exact distance graphs of large complete $q$-ary tree~\cite{BOUSQUET}. 
Results on the chromatic number of exact distance graphs are in particular known for trees~\cite{BOUSQUET} and chordal graphs~\cite{QUIROZ}. Also very recently, van den Heuvel, Kierstead and Quiroz~\cite{HEUVEL} proved that for any graph $G$ and odd positive integer $p$, $\chi(G\pow{p})$ is bounded by the weak $(2p-1)$-colouring number of $G$.

The exact distance-$p$ graphs have been much earlier considered for the case when $G$ is a hypercube in the frame of the so-called cube-like graphs~\cite{DVORAK, HARARY, LINIAL, PAYAN, PJWAN, ZIEGLER}, see also the book of Jensen and Toft~\cite{JENSEN}. 
Initially, the notion of the cube-like graph was introduced by Lov\'asz~\cite{HARARY} who proved that every cube-like has integral spectrum. Apparently, many authors had conjectured that the chromatic number of cube-like graphs is always some power of 2. It turned out that there is no cube-like graph of chromatic number $3$ but there exists a cube-like graph of chromatic number $7$~\cite{PAYAN}. 
Ziegler also studied the cube-like graphs (under the name Hamming graphs), and determined the chromatic number in numerous cases.
Finally, the chromatic number of exact distance-$2$ hypercube is a problem which has been intensively studied \cite{LINIAL, PJWAN}.

% $\chi(Q_n\pow{2})\le 2^{ \lceil \log_2(n) \rceil}$\cite{LINIAL}. 

We believe that the concept of exact distance graphs is not only interesting because of the chromatic number, but also as a general metric graph theory concept. With this paper we thus hope to initiate an interest for general properties of the construction. Actually, using a different language, back in 2001  Ziegler proved the following property for bipartite graphs.

\begin{lemma}{\rm (\cite{ZIEGLER})}
\label{lem:zieg} Let $G$ be a bipartite graph.
\begin{itemize}
\item[(i)] If $p$ is even, then $G\pow{p}$ is not connected.
\item[(ii)] If $p$ is odd, then $G\pow{p}$ is a bipartite graph (and has the same bipartition than $G$).
\end{itemize}
\end{lemma}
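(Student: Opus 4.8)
The plan rests on the elementary parity property of distances in bipartite graphs: if $G$ is (connected and) bipartite with bipartition $(A,B)$, then $d_G(u,v)$ is even exactly when $u,v$ lie in the same part, and odd exactly when $u,v$ lie in different parts. First I would record this fact, either as folklore or with the one-line argument: a shortest $u$--$v$ path alternates between $A$ and $B$, so its length has the same parity as ``$u,v$ are separated by the bipartition''. (If $G$ is disconnected one applies this componentwise; vertices in distinct components are simply non-adjacent in $G\pow{p}$, which only makes things easier.)

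For part~(i), assume $p$ is even. If $uv\in E(G\pow{p})$ then $d_G(u,v)=p$ is even, so by the parity fact $u$ and $v$ belong to the same part of the bipartition. Hence no edge of $G\pow{p}$ joins a vertex of $A$ to a vertex of $B$; that is, $V(G)=A\cup B$ splits $G\pow{p}$ into two parts with no edge between them, so $G\pow{p}$ is disconnected.

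For part~(ii), assume $p$ is odd. If $uv\in E(G\pow{p})$ then $d_G(u,v)=p$ is odd, so by the parity fact $u$ and $v$ lie in different parts. Thus every edge of $G\pow{p}$ joins $A$ to $B$, which says precisely that $(A,B)$ is a bipartition of $G\pow{p}$; in particular $G\pow{p}$ is bipartite with the same bipartition as $G$.

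There is essentially no obstacle here: the whole statement is immediate once the parity of distances in bipartite graphs is in hand. The only point deserving a word of care is the hypothesis in~(i) ensuring that both $A$ and $B$ are nonempty (e.g.\ $G$ connected with at least one edge), since otherwise ``disconnected'' could fail for degenerate reasons.
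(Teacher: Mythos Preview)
Your argument is correct and is the natural parity argument one would expect. Note, however, that the paper does not supply its own proof of this lemma: it is quoted from Ziegler with a citation and no proof appears in the text. So there is nothing to compare against beyond saying that your reasoning is exactly the standard one underlying the cited result. Your closing remark about needing both parts of the bipartition to be nonempty (equivalently, $G$ having at least one edge) is the only delicate point, and you have handled it.
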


In this paper we focus on the exact distance graphs of graph products and proceed as follows. In the rest of this section we give required definitions and fix notation. Then, in Section~\ref{sec:products}, we present formulas describing the structure of exact distance-$p$ graphs of the Cartesian, the strong, and the lexicographic product, respectively, of arbitrary two graphs. In the case of the direct product of graphs only exact distance-$2$ graphs could be expressible with a nice formula, which in turn simplifies to $(G\times H)\pow{2}=G\pow{2}\boxtimes H\pow{2}$ when $G$ and $H$ are both triangle-free graphs. Nice expressions are found also for the exact distance-$2$ graphs of some products of the 2-way infinite paths, which yields the chromatic number of the corresponding grids.
In Section~\ref{sec:connectivity}, we consider the characteristic conditions for the connectivity of exact distance graphs with respect to all four products. This time, for the Cartesian and the direct product we can only deal with the case $p=2$, while for the other two products the result covers exact distance-$p$ graphs for an arbitrary integer $p$. In Section~\ref{sec:hypercubes}, we study the exact distance-$p$ graphs of hypercubes. We start by showing that $Q_{n}\pow{n-1} \cong Q_{n}$, and by describing some structural properties of $Q_{n}\pow{p}$ for an arbitrary $p\le n$. Noting that some generalized Johnson graphs appear as induced subgraphs in $Q_{n}\pow{p}$, we consider the chromatic number of these graphs, combining some results from the literature with some new constructions. This enables us to give upper bounds for the chromatic number of $Q_{n}\pow{p}$ for $p\in \{n-2,n-3,n-4\}$, which are $8$, $15$, and $26$, respectively. 

If $G$ is a graph, then $d_G(x,y)$ is the standard shortest-path distance between vertices $x$ and $y$ in $G$. The maximum distance between $u$ and all the other vertices is the {\em eccentricity} of $u$. The maximum and the minimum eccentricity among the vertices of $G$ are the {\em diameter} $\diam(G)$ and the {\em radius} $\rad(G)$.

 We define $G^{\pow{0}}$ as the graph with the vertex set $V(G)$ and with a loop added to each of its vertices. If $G$ and $H$ are graphs on the same vertex set, then $G\uplus H$ is the graph with vertex set $V(G)=V(H)$ and edge set $E(G) \cup E(H)$. If $G$ is a graph, then $kG$ denotes the disjoint union of $k$ copies of the graph $G$. 

The vertex set of each of the four standard graph products of graphs $G$ and $H$ is equal to $V(G)\times V(H)$. In the \emph{direct product} $G\times H$ vertices $(g_{1},h_{1})$ and $(g_{2},h_{2})$ are adjacent when $g_1g_2\in E(G)$ and $h_1h_2\in E(H)$. In the \emph{lexicographic product} $G\circ H$, vertices $(g_{1},h_{1})$ and $(g_{2},h_{2})$ are adjacent if either $g_{1}g_{2}\in E(G)$, or $g_{1}=g_{2}$ and $h_{1}h_{2}\in E(H)$. In the \emph{strong product} $G\boxtimes H$ vertices $(g_{1},h_{1})$ and $(g_{2},h_{2})$ are adjacent whenever either
$g_1g_2\in E(G)$ and $h_1=h_2$, or $g_1=g_2$ and $h_1h_2\in E(H)$, or $g_1g_2\in E(G)$ and $h_1h_2\in E(H)$.
Finally, in the \emph{Cartesian product} $G\cp H$ vertices $(g_{1},h_{1})$ and $(g_{2},h_{2})$ are adjacent if either
$g_1g_2\in E(G)$ and $h_1=h_2$, or $g_1=g_2$ and $h_1h_2\in E(H)$. All these products are associative and, with the exception of the lexicographic product, also commutative. 
Let $G * H$ be any of the four standard graph products. Then the subgraph of $G * H$
induced by $\{g\} \times V (H)$ is called an {\em $H$-layer} of $G * H$ and denoted $^gH$. For more on products graphs see the book~\cite{HAMMACK}.

%%%%%%%%%%%%%%%%%%%%%%%%%%%%%%%%%%%%%%%%%%%%%%%
%%%%%%%%%%%%%%%%%%%%%%%%%%%%%%%%%%%%%%%%%%%%%%%
\section{Exact distance graphs of graph products}
\label{sec:products}
%%%%%%%%%%%%%%%%%%%%%%%%%%%%%%%%%%%%%%%%%%%%%%%
%%%%%%%%%%%%%%%%%%%%%%%%%%%%%%%%%%%%%%%%%%%%%%%

We first recall the distance function of the four standard products, cf.~\cite{HAMMACK}.

\begin{lemma}{\rm (\cite{HAMMACK})} 
\label{lem:dist}
If $G$ and $H$ are graphs and $(g_1,h_1), (g_2,h_2)\in V(G)\times V(H)$, then
\begin{enumerate}
\item[(i)] $d_{G\cp H}((g_1,h_1),(g_2,h_2))=d_G (g_1,g_2)+d_H (h_1,h_2)$;
\item[(ii)] $d_{G\boxtimes H}((g_1,h_1),(g_2,h_2))=\max\{d_G (g_1,g_2), d_H (h_1,h_2)\}$;
\item[(iii)] $d_{G\times H}((g_1,h_1),(g_2,h_2))=k$, where $k$ is the smallest integer such that there exists a $g_1,g_2$-walk of length $k$ in $G$ and a $h_1,h_2$-walk of length $k$ in $H$;
\item[(iv)] $d_{G\circ H}((g_1,h_1),(g_2,h_2)) = 
\left\{\begin{array}{ll}
d_G (g_1,g_2), & \text{ if } g_1\ne g_2;\\
\min\{d_H (h_1,h_2),2\}, & \text{ if } g_1=g_2 \text{ and } \deg_{G}(g_1)>0;\\
d_H (h_1,h_2), & \text{ otherwise.}\end{array}\right.$
\end{enumerate}
\end{lemma}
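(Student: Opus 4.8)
The plan is to prove all four formulas by the same two-sided scheme: a lower bound obtained by projecting a walk in the product onto the two factors, and a matching upper bound obtained by exhibiting an explicit walk in the product of the claimed length. Throughout I would use that $d_G(x,y)$ equals the minimum length of an $x,y$-\emph{walk} in $G$, so that it suffices to reason about walks.

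For (i), the key point is that every edge of $G\cp H$ changes exactly one coordinate. Given a walk $W$ of length $k$ from $(g_1,h_1)$ to $(g_2,h_2)$, deleting from the sequence of first coordinates the repetitions caused by the $H$-steps yields a $g_1,g_2$-walk in $G$, and symmetrically an $h_1,h_2$-walk in $H$; their lengths sum to $k$, so $k\ge d_G(g_1,g_2)+d_H(h_1,h_2)$. For the reverse inequality, concatenate a shortest $g_1,g_2$-path lifted into $V(G)\times\{h_1\}$ with a shortest $h_1,h_2$-path lifted into the layer $^{g_2}H$. For (ii), the first-coordinate sequence of a walk of length $k$ in $G\boxtimes H$ has consecutive entries equal or adjacent, hence $d_G(g_1,g_2)\le k$, and likewise $d_H(h_1,h_2)\le k$, giving "$\ge\max$." For "$\le\max$," assume without loss of generality $a:=d_G(g_1,g_2)\le b:=d_H(h_1,h_2)$, take shortest paths in $G$ and $H$, pad the $G$-path by repeating $g_2$ until it has length $b$, and run the two padded walks synchronously; each synchronous step is an edge of $G\boxtimes H$ (a diagonal edge while the $G$-walk is still moving, a pure $H$-edge afterwards), producing a walk of length $b$.

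For (iii), observe that $(g,h)(g',h')\in E(G\times H)$ iff $gg'\in E(G)$ and $hh'\in E(H)$; consequently a walk of length $k$ in $G\times H$ from $(g_1,h_1)$ to $(g_2,h_2)$ is \emph{exactly} a pair consisting of a $g_1,g_2$-walk of length $k$ in $G$ and an $h_1,h_2$-walk of length $k$ in $H$, and conversely any such pair assembles coordinatewise into a walk of length $k$ in the product; so the distance is the least such $k$ (and is undefined, i.e.\ the vertices lie in different components, when no such $k$ exists). For (iv), split into the three stated cases. If $g_1\ne g_2$, projecting onto $G$ shows the distance is at least $d_G(g_1,g_2)$, while walking along a shortest $g_1,g_2$-path $x_0,\dots,x_d$ and using that $(x_i,a)(x_{i+1},b)\in E(G\circ H)$ for all $a,b$ reaches $(g_2,h_2)$ in $d$ steps. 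If $g_1=g_2$ and $\deg_G(g_1)>0$, then distance $0$ (when $h_1=h_2$) and distance $1$ (when $h_1h_2\in E(H)$) are immediate; otherwise $d_H(h_1,h_2)\ge 2$, the distance cannot be $1$ since the only length-$1$ moves fixing the first coordinate are $H$-edges, and the detour $(g_1,h_1),(g',\ast),(g_1,h_2)$ through a neighbour $g'$ of $g_1$ has length $2$, so the distance equals $2=\min\{d_H(h_1,h_2),2\}$. Finally, if $g_1=g_2$ and $\deg_G(g_1)=0$, then $g_1$ is isolated in $G$, no edge of $G\circ H$ leaves the layer $^{g_1}H\cong H$, and the distance equals $d_H(h_1,h_2)$.

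I expect the main obstacle to be the careful book-keeping in the projection arguments: verifying that deleting consecutive repetitions in a projected coordinate sequence preserves endpoints and behaves correctly with respect to length (exactly additive for $\cp$, non-increasing and bounded below by the factor distance for $\boxtimes$ and $\circ$), together with the synchronisation-and-padding construction for the strong product and the exhaustive enumeration of the short moves in the lexicographic case, where the hypothesis $\deg_G(g_1)>0$ is exactly what enables the length-$2$ detour.
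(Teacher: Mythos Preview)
Your proof is correct and follows the standard route. Note, however, that the paper does not give its own proof of this lemma: it is quoted from the Handbook of Product Graphs \cite{HAMMACK} and used as a black box, so there is no ``paper's proof'' to compare against. Your two-sided scheme (project to get the lower bound, build an explicit walk for the upper bound) is exactly the classical argument found in that reference, including the padding trick for the strong product and the length-$2$ detour in the lexicographic case enabled by $\deg_G(g_1)>0$.
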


\begin{theorem}
\label{thm:cart}
If $G$ and $H$ are graphs, then
$$(G\cp H)\pow{p}=\biguplus_{i=0}^{p} \left(G\pow{i}\times H\pow{p-i}\right)\,.$$
Equivalently, 
$$(G\cp H)\pow{p}=\biguplus_{i=1}^{p-1} \left(G\pow{i}\times H\pow{p-i}\right)\uplus \left(G\pow{p}\cp H\pow{p}\right)\,.$$
\end{theorem}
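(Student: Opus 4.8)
The plan is to prove both identities by checking, for an arbitrary (possibly coinciding) pair of vertices $(g_1,h_1),(g_2,h_2)\in V(G)\times V(H)$, that it is an edge of $(G\cp H)\pow{p}$ if and only if it is an edge of the graph on the right-hand side; all graphs in the statement have the same vertex set $V(G)\times V(H)$, so this suffices. The only nontrivial input is the distance formula for the Cartesian product, Lemma~\ref{lem:dist}(i), $d_{G\cp H}((g_1,h_1),(g_2,h_2))=d_G(g_1,g_2)+d_H(h_1,h_2)$.

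First I would unpack the right-hand side. By the definition of $\uplus$ and of the direct product, $(g_1,h_1)$ is adjacent to $(g_2,h_2)$ in $\biguplus_{i=0}^{p}(G\pow{i}\times H\pow{p-i})$ exactly when there is some $i\in\{0,\dots,p\}$ with $g_1g_2\in E(G\pow{i})$ and $h_1h_2\in E(H\pow{p-i})$. Here one must be careful about the loops introduced in $G\pow{0}$ and $H\pow{0}$: with the convention $d_G(g,g)=0$, the relation $g_1g_2\in E(G\pow{i})$ is equivalent to $d_G(g_1,g_2)=i$ for every $i\ge 0$, the case $i=0$ being precisely $g_1=g_2$. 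Hence the adjacency above amounts to the existence of $i\in\{0,\dots,p\}$ with $d_G(g_1,g_2)=i$ and $d_H(h_1,h_2)=p-i$; since distances are nonnegative integers (or $\infty$), such an $i$ exists if and only if $d_G(g_1,g_2)$ and $d_H(h_1,h_2)$ are both finite and sum to $p$, in which case necessarily $i=d_G(g_1,g_2)\in\{0,\dots,p\}$. By Lemma~\ref{lem:dist}(i) this last condition is exactly $d_{G\cp H}((g_1,h_1),(g_2,h_2))=p$, i.e.\ adjacency in $(G\cp H)\pow{p}$. This establishes the first identity. In passing one checks there are no loops on either side when $p\ge1$: no vertex is at distance $p$ from itself, and on the right a loop at $(g,h)$ would force $i=0=p-i$.

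For the ``Equivalently'' part I would split off the extreme summands $i=0$ and $i=p$ from the middle ones. The summand $G\pow{0}\times H\pow{p}$ has as its edges exactly the pairs $(g,h_1)(g,h_2)$ with $d_H(h_1,h_2)=p$, that is, the edges lying inside $H$-layers of $G\pow{p}\cp H\pow{p}$; symmetrically $G\pow{p}\times H\pow{0}$ contributes exactly the edges inside $G$-layers of $G\pow{p}\cp H\pow{p}$. Since every edge of a Cartesian product lies inside a layer of one of the two types, $(G\pow{0}\times H\pow{p})\uplus(G\pow{p}\times H\pow{0})=G\pow{p}\cp H\pow{p}$, and regrouping the union in the first identity yields the second. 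The degenerate case $p=1$, where the middle union is empty and $(G\cp H)\pow{1}=G\cp H=G\pow{1}\cp H\pow{1}$, is consistent.

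I do not expect a genuine obstacle here: the argument is a direct translation of Lemma~\ref{lem:dist}(i). The only points demanding attention are bookkeeping ones — correctly accounting for the loops of $G\pow{0}$ and $H\pow{0}$ inside a direct product, handling the boundary indices $i\in\{0,p\}$, and verifying that no loops are created — which is why I would fix the loop convention explicitly at the outset.
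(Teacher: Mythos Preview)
Your proposal is correct and follows essentially the same route as the paper's proof: both use Lemma~\ref{lem:dist}(i) to characterize adjacency in $(G\cp H)\pow{p}$ as the existence of an index $i$ with $d_G(g_1,g_2)=i$ and $d_H(h_1,h_2)=p-i$, then translate this into the direct-product description, and finally split off the $i=0$ and $i=p$ terms to get the second formula via $(G\pow{0}\times H\pow{p})\uplus(G\pow{p}\times H\pow{0})=G\pow{p}\cp H\pow{p}$. Your write-up is simply more explicit about the loop convention and boundary bookkeeping than the paper's terse version.
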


\begin{proof}
By Lemma~\ref{lem:dist}(i), $d_{G\cp H}((g_1,h_1),(g_2,h_2))=p$ if and only if there exists $i, \ 0\leq i\le p$, such that $d_G (g_1,g_2)=i$ and $d_H (h_1,h_2)=p-i$. This in turn holds if and only if $g_1g_2\in E(G\pow{i})$ and $h_1h_2\in E(G\pow{p-i})$. From this the first equality follows by the definition of the direct product. The second equality follows from the fact that $\left(G\pow{0}\times H\pow{p}\right)\uplus \left(G\pow{p}\times H\pow{0}\right) = G\pow{p}\cp H\pow{p}$. 
\end{proof}

Fig.~\ref{power2cartesian} illustrates Theorem~\ref{thm:cart} on the case $G=P_4$, $H = P_3$, and $p=2$.

\begin{figure}[ht!]
\begin{center}
\begin{tikzpicture}[scale=0.85]

\draw (0,0) .. controls (-0.2,1) .. (0,2);
\draw (1,0) .. controls (0.8,1) .. (1,2);
\draw (2,0) .. controls (1.8,1) .. (2,2);
\draw (3,0) .. controls (2.8,1) .. (3,2);

\draw (0,0) .. controls (1,-0.2) .. (2,0);
\draw (1,0) .. controls (2,-0.2) .. (3,0);
\draw (0,1) .. controls (1,0.8) .. (2,1);
\draw (1,1) .. controls (2,0.8) .. (3,1);
\draw (0,2) .. controls (1,1.8) .. (2,2);
\draw (1,2) .. controls (2,1.8) .. (3,2);

\node at (0,0)[circle,draw=black,fill=black,scale=0.4]{};
\node at (1,0)[circle,draw=black,fill=black,scale=0.4]{};
\node at (2,0)[circle,draw=black,fill=black,scale=0.4]{};
\node at (3,0)[circle,draw=black,fill=black,scale=0.4]{};
\node at (0,1)[circle,draw=black,fill=black,scale=0.4]{};
\node at (1,1)[circle,draw=black,fill=black,scale=0.4]{};
\node at (2,1)[circle,draw=black,fill=black,scale=0.4]{};
\node at (3,1)[circle,draw=black,fill=black,scale=0.4]{};
\node at (0,2)[circle,draw=black,fill=black,scale=0.4]{};
\node at (1,2)[circle,draw=black,fill=black,scale=0.4]{};
\node at (2,2)[circle,draw=black,fill=black,scale=0.4]{};
\node at (3,2)[circle,draw=black,fill=black,scale=0.4]{};

\node at (1.5,-0.6){\small{$ \left(P_4\pow{2}\times P_3\pow{0}\right)\uplus \left(P_4\pow{0}\times P_3\pow{2}\right) $}};

\draw (0+6,0) -- (2+6,2);
\draw (1+6,0) -- (3+6,2);
\draw (0+6,1) -- (1+6,2);
\draw (2+6,0) -- (3+6,1);

\draw (3+6,0) -- (1+6,2);
\draw (2+6,0) -- (0+6,2);
\draw (3+6,1) -- (2+6,2);
\draw (1+6,0) -- (0+6,1);

\node at (0+6,0)[circle,draw=black,fill=black,scale=0.4]{};
\node at (1+6,0)[circle,draw=black,fill=black,scale=0.4]{};
\node at (2+6,0)[circle,draw=black,fill=black,scale=0.4]{};
\node at (3+6,0)[circle,draw=black,fill=black,scale=0.4]{};
\node at (0+6,1)[circle,draw=black,fill=black,scale=0.4]{};
\node at (1+6,1)[circle,draw=black,fill=black,scale=0.4]{};
\node at (2+6,1)[circle,draw=black,fill=black,scale=0.4]{};
\node at (3+6,1)[circle,draw=black,fill=black,scale=0.4]{};
\node at (0+6,2)[circle,draw=black,fill=black,scale=0.4]{};
\node at (1+6,2)[circle,draw=black,fill=black,scale=0.4]{};
\node at (2+6,2)[circle,draw=black,fill=black,scale=0.4]{};
\node at (3+6,2)[circle,draw=black,fill=black,scale=0.4]{};

\node at (1.5+6,-0.6){ \small{$P_4\pow{1}\times P_3\pow{1} $}};

\draw (0+12,0) -- (2+12,2);
\draw (1+12,0) -- (3+12,2);
\draw (0+12,1) -- (1+12,2);
\draw (2+12,0) -- (3+12,1);

\draw (3+12,0) -- (1+12,2);
\draw (2+12,0) -- (0+12,2);
\draw (3+12,1) -- (2+12,2);
\draw (1+12,0) -- (0+12,1);

\draw (0+12,0) .. controls (-0.2+12,1) .. (0+12,2);
\draw (1+12,0) .. controls (0.8+12,1) .. (1+12,2);
\draw (2+12,0) .. controls (1.8+12,1) .. (2+12,2);
\draw (3+12,0) .. controls (2.8+12,1) .. (3+12,2);

\draw (0+12,0) .. controls (1+12,-0.2) .. (2+12,0);
\draw (1+12,0) .. controls (2+12,-0.2) .. (3+12,0);
\draw (0+12,1) .. controls (1+12,0.8) .. (2+12,1);
\draw (1+12,1) .. controls (2+12,0.8) .. (3+12,1);
\draw (0+12,2) .. controls (1+12,1.8) .. (2+12,2);
\draw (1+12,2) .. controls (2+12,1.8) .. (3+12,2);

\node at (0+12,0)[circle,draw=black,fill=black,scale=0.4]{};
\node at (1+12,0)[circle,draw=black,fill=black,scale=0.4]{};
\node at (2+12,0)[circle,draw=black,fill=black,scale=0.4]{};
\node at (3+12,0)[circle,draw=black,fill=black,scale=0.4]{};
\node at (0+12,1)[circle,draw=black,fill=black,scale=0.4]{};
\node at (1+12,1)[circle,draw=black,fill=black,scale=0.4]{};
\node at (2+12,1)[circle,draw=black,fill=black,scale=0.4]{};
\node at (3+12,1)[circle,draw=black,fill=black,scale=0.4]{};
\node at (0+12,2)[circle,draw=black,fill=black,scale=0.4]{};
\node at (1+12,2)[circle,draw=black,fill=black,scale=0.4]{};
\node at (2+12,2)[circle,draw=black,fill=black,scale=0.4]{};
\node at (3+12,2)[circle,draw=black,fill=black,scale=0.4]{};

\node at (1.5+12,-0.6){ \small{$(P_4\cp P_3)\pow{2} $}};
\end{tikzpicture}
\end{center}
\caption{Illustration of the structure of $(P_4\cp P_3)\pow{2} $ which is isomorphic to $\left(P_4\pow{2}\times P_3\pow{0}\right)\uplus \left(P_4\pow{0}\times P_3\pow{2}\right) \uplus \left( P_4\pow{1}\times P_3\pow{1} \right)$.}
\label{power2cartesian}
\end{figure}

\begin{theorem}
\label{thm:strong}
If $G$ and $H$ are graphs, then
$$(G\boxtimes H)\pow{p}=\biguplus_{i=0}^{p} \left((G\pow{p}\times H\pow{i})\uplus (G\pow{i}\times H\pow{p})\right)\,.$$ 
\end{theorem}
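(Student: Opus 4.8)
The plan is to argue exactly as in the proof of Theorem~\ref{thm:cart}, but replacing the additive distance formula of the Cartesian product by the maximum-distance formula of the strong product given in Lemma~\ref{lem:dist}(ii). First I would fix a pair of vertices $(g_1,h_1),(g_2,h_2)\in V(G)\times V(H)$ and record that, by Lemma~\ref{lem:dist}(ii), they are adjacent in $(G\boxtimes H)\pow{p}$ if and only if $\max\{d_G(g_1,g_2),d_H(h_1,h_2)\}=p$.

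Next I would split this condition into the two (not mutually exclusive) ways in which the maximum can be attained: either $d_G(g_1,g_2)=p$ and $d_H(h_1,h_2)=i$ for some $i$ with $0\le i\le p$, or $d_H(h_1,h_2)=p$ and $d_G(g_1,g_2)=i$ for some $i$ with $0\le i\le p$. Translating each equality $d_G(g_1,g_2)=j$ into the statement $g_1g_2\in E(G\pow{j})$, and likewise for $H$ — recalling that $G\pow{0}$ carries a loop at every vertex, so that $g_1g_2\in E(G\pow{0})$ simply means $g_1=g_2$ — the first case says precisely that $(g_1,h_1)(g_2,h_2)$ is an edge of $G\pow{p}\times H\pow{i}$ for some $i$ with $0\le i\le p$ (by the definition of the direct product), and the second case that it is an edge of $G\pow{i}\times H\pow{p}$ for some such $i$. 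Taking the union over all admissible $i$ and over the two cases, and using that $G\uplus H$ is by definition the graph on the common vertex set $V(G\boxtimes H)=V(G)\times V(H)$ with edge set the union of the two edge sets, yields the claimed identity.

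I do not expect a genuine obstacle here; the proof is essentially a one-line case distinction, and the only points deserving a word of care are the boundary terms. The summand with $i=0$ contributes exactly those pairs realizing the maximum with one coordinate fixed (the edges $\{(g_1,h),(g_2,h)\}$ with $d_G(g_1,g_2)=p$, and symmetrically), and the two families overlap in the single graph $G\pow{p}\times H\pow{p}$ arising at $i=p$ on both sides; this overlap is harmless, since $\uplus$ merely unions edge sets and thus records each edge once no matter how many summands contain it.
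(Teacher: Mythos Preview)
Your proposal is correct and follows essentially the same approach as the paper's own proof: apply Lemma~\ref{lem:dist}(ii) to characterize $d_{G\boxtimes H}((g_1,h_1),(g_2,h_2))=p$ as the disjunction of the two cases $d_G=p,\ d_H=i\le p$ and $d_H=p,\ d_G=i\le p$, and then read off the direct-product summands. Your extra remarks on the boundary term $i=0$ and the harmless overlap at $i=p$ are sound and, if anything, make the argument more explicit than the paper's one-sentence version.
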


\begin{proof}
By Lemma~\ref{lem:dist}(ii), $d_{G\boxtimes H}((g_1,h_1),(g_2,h_2))=p$ if and only if 
%\max(d_G (g_1,g_2), d_H (h_1,h_2))$. This statement is equivalent with the property that for every two vertices $(g_1,h_1)$ and $(g_2,h_2)$ at distance $p$ satisfy 
either $d_G (g_1,g_2)=p$ and $d_H (h_1,h_2)=i$, where $0\le i\le p$, or $d_G (g_1,g_2)=i$ and $d_H (h_1,h_2)=p$, where $0\le i\le p$. Hence, the theorem follows.
\end{proof}

In view of Lemma~\ref{lem:dist}(iii), it is not surprising that the situation with the direct product is more tricky (as it is often the case with the direct product). To state a formula for $(G\times H)\pow{2}$, we need the following concept, see~\cite[Section 11.9]{NESET1}. If $G$ is a graph, then $G\dpow{p}$ is the graph with $V(G\dpow{p})=V(G)$, vertices $x$ and $y$ being adjacent if and only if they are connected in $G$ with a path of length $p$. 

\begin{theorem}
\label{thm:direct}
If $G$ and $H$ are graphs without isolated vertices, then 
$$(G\times H)\pow{2}=(G\dpow{2}\square H\dpow{2} )\uplus (G\dpow{2}\times H\pow{2} )\uplus (G\pow{2}\times H\dpow{2})\,.$$
In particular, if $G$ and $H$ are triangle-free, then
$$(G\times H)\pow{2}=G\pow{2}\boxtimes H\pow{2}\,.$$
\end{theorem}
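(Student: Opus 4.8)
The plan is to read off everything from Lemma~\ref{lem:dist}(iii): $d_{G\times H}((g_1,h_1),(g_2,h_2))=2$ exactly when there are $g_1,g_2$- and $h_1,h_2$-walks of length $2$ in $G$ and in $H$, while no length-$0$ walk and no length-$1$ walk exists simultaneously in both coordinates. I would first isolate two elementary facts about short walks. First, since $G$ has no isolated vertex, there is an $x,y$-walk of length $2$ in $G$ if and only if either $x=y$, or $x\ne y$ and $x,y$ have a common neighbour; and in the latter case this is equivalent to $xy\in E(G\dpow{2})$, because a common neighbour of two distinct vertices of a loopless graph is automatically different from both of them, so the length-$2$ walk is in fact a path. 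Second -- and this is what forces $G\dpow{2}$ rather than $G\pow{2}$ into the formula -- for distinct $x,y$ one has $xy\in E(G\dpow{2})$ if and only if $d_G(x,y)=2$, or $xy\in E(G)$ and $x,y,w$ is a triangle for some $w$; that is, $E(G\dpow{2})$ is the disjoint union of $E(G\pow{2})$ and the set of edges of $G$ lying on a triangle. The same two facts hold for $H$.

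With these in hand I would verify the claimed edge relation coordinatewise, splitting into four cases according to whether $g_1=g_2$ and whether $h_1=h_2$. If $g_1=g_2$ and $h_1=h_2$, both sides have no edge at this pair (all of $G\pow2,H\pow2,G\dpow2,H\dpow2$ are loopless, hence so are their Cartesian and direct products), and indeed the distance is $0$. If exactly one coordinate is constant, say $g_1=g_2$, then the distance is at least $2$, a length-$2$ walk in the first coordinate exists trivially because $G$ has no isolated vertex, and so the distance equals $2$ iff a length-$2$ walk exists in the second coordinate, iff $h_1h_2\in E(H\dpow2)$; on the right-hand side the summands $G\dpow2\times H\pow2$ and $G\pow2\times H\dpow2$ contribute no such edge (both $G\dpow2$ and $G\pow2$ are loopless), while $G\dpow2\square H\dpow2$ contributes exactly the edge $(g_1,h_1)(g_2,h_2)$ precisely when $h_1h_2\in E(H\dpow2)$. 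The symmetric subcase is identical. The main subcase is $g_1\ne g_2$ and $h_1\ne h_2$: here no length-$0$ walk exists in either coordinate, so the distance equals $2$ iff $g_1g_2\in E(G\dpow2)$, $h_1h_2\in E(H\dpow2)$, and not both $g_1g_2\in E(G)$ and $h_1h_2\in E(H)$; meanwhile the Cartesian summand contributes nothing and one must match this condition with $[g_1g_2\in E(G\dpow2)$ and $h_1h_2\in E(H\pow2)]$ or $[g_1g_2\in E(G\pow2)$ and $h_1h_2\in E(H\dpow2)]$. This equivalence drops out of the triangle decomposition above: if $g_1g_2\in E(G\dpow2)$ is witnessed by distance $2$ it lies in $E(G\pow2)$ and then $h_1h_2\in E(H\dpow2)$ suffices, giving the second disjunct; if it is witnessed by a triangle edge then $g_1g_2\in E(G)$, so the exclusion forces $h_1h_2\notin E(H)$, whence $h_1h_2\in E(H\dpow2)$ collapses to $h_1h_2\in E(H\pow2)$, giving the first disjunct; the converse directions are immediate since $E(G\pow2)\subseteq E(G\dpow2)$ and $E(G\pow2)\cap E(G)=\emptyset$. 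A short boolean check completes this subcase and hence the first formula.

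For the triangle-free specialisation, the observation is that in a triangle-free graph no edge lies on a triangle, so the triangle decomposition degenerates to $E(G\dpow2)=E(G\pow2)$, i.e. $G\dpow2=G\pow2$, and likewise $H\dpow2=H\pow2$. Substituting these equalities into the general formula turns its last two summands into two copies of $G\pow2\times H\pow2$, so the whole expression becomes $(G\pow2\square H\pow2)\uplus(G\pow2\times H\pow2)$; and this is exactly $G\pow2\boxtimes H\pow2$, since by definition the edge set of a strong product is the disjoint union of the edge sets of the corresponding Cartesian and direct products.

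I expect the only real difficulty to be bookkeeping rather than ideas: one must keep the three relations $E(G)$, $E(G\pow2)$, $E(G\dpow2)$ (and their $H$-analogues) carefully apart, remember that Lemma~\ref{lem:dist}(iii) speaks of \emph{walks} and not paths -- which is precisely why $G\dpow{2}$, not $G\pow{2}$, is the right object and why edges lying on triangles need special handling -- and use the hypothesis that $G$ and $H$ have no isolated vertices in exactly the right spot, namely to guarantee a length-$2$ closed walk at every vertex, which is what makes the $G\dpow2\square H\dpow2$ summand correct when one coordinate is held fixed. Without that hypothesis the identity fails: if $g$ is isolated in $G$ and $h_1h_2\in E(H\dpow2)$, then $(g,h_1)$ and $(g,h_2)$ lie in different components of $G\times H$, yet would be adjacent in $G\dpow2\square H\dpow2$.
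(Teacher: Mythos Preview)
Your proof is correct and follows essentially the same route as the paper: invoke Lemma~\ref{lem:dist}(iii), split according to whether $g_1=g_2$ and whether $h_1=h_2$, and for the triangle-free specialisation substitute $G\dpow{2}=G\pow{2}$ and $H\dpow{2}=H\pow{2}$ before collapsing $(G\pow{2}\cp H\pow{2})\uplus(G\pow{2}\times H\pow{2})$ to $G\pow{2}\boxtimes H\pow{2}$. The one place you go beyond the paper is the main subcase $g_1\ne g_2$, $h_1\ne h_2$: the paper asserts the biconditional with a terse parenthetical, whereas you make the underlying boolean equivalence explicit via the decomposition $E(G\dpow{2})=E(G\pow{2})\sqcup\{\text{edges on triangles}\}$, which is a genuine clarification rather than a different method.
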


\begin{proof}
Let $(g_1,h_1), (g_2,h_2)$ be vertices of $G\times H$ with $d_{G\times H}((g_1,h_1),(g_2,h_2))=2$. Then by Lemma~\ref{lem:dist}(iii) there exist a $g_1,g_2$-walk of length $2$ in $G$ and a $h_1,h_2$-walk of length $2$ in $H$, and not both $g_1g_2\in E(G)$ and $h_1h_2\in E(H)$ hold.

If $g_1=g_2$, then, $d_{G\times H}((g_1,h_1),(g_2,h_2))=2$ if and only if there is a path of length 2 between $h_1$ and $h_2$ in $H$. Note that the sufficiency of this assertion holds because $G$ is isolate-free. Similarly, if $h_1=h_2$, then, $d_{G\times H}((g_1,h_1),(g_2,h_2))=2$ if and only if there is a path of length 2 between $g_1$ and $g_2$ in $G$, where we use the fact that $H$ is isolate-free. It follows that $G\dpow{2}\square H\dpow{2}$ is a spanning subgraph of $(G\times H)\pow{2}$.

Suppose next that $g_1\neq g_2$ and $h_1\neq h_2$. Then $d_{G\times H}((g_1,h_1),(g_2,h_2))=2$ if and only if 
\begin{itemize}
\item either there is a path of length 2 between $h_1$ and $h_2$ in $H$ and $d_{G}(g_1,g_2)=2$, 
\item or there is a path of length 2 between $g_1$ and $g_2$ in $G$ and $d_{H}(h_1,h_2)=2$.  
\end{itemize}
(Indeed, if $g_1g_2\in E(G)$ and $h_1h_2\in E(H)$, then $(g_1,h_1)(g_2,h_2)\in E(G\times H)$, and if there is no $g_1,g_2$-path of length $2$ in $G$ or no $h_1, h_2$-path of length $2$ in $H$, then $d_{G\times H}((g_1,h_1),(g_2,h_2))>2$.) The first possibility implies that $G\dpow{2}\times H\pow{2}$ is a spanning subgraph of $(G\times H)\pow{2}$, while the second possibility implies that same for $G\pow{2}\times H\dpow{2}$. This proves the first formula of the theorem. 

Suppose now that $G$ and $H$ are triangle-free. Then $G\pow{2} = G\dpow{2}$ and $H\pow{2} = H\dpow{2}$. By the already proved formula we have 
\begin{eqnarray*}
(G\times H)\pow{2} & = & (G\dpow{2}\cp H\dpow{2} )\uplus (G\dpow{2}\times H\pow{2} )\uplus (G\pow{2}\times H\dpow{2})  \\
& = & (G\pow{2}\cp H\pow{2} )\uplus (G\pow{2}\times H\pow{2}) \\
& = & G\pow{2}\boxtimes H\pow{2}\,,
\end{eqnarray*}
where the last equality holds by the basic relation between the three products in question.
\end{proof}

For the lexicographic product, the case where $G$ is trivial is special since we have $(K_1\circ H)\pow{p}=H\pow{p}$. If $G$ has no isolated vertex, we have the following.

\begin{theorem}
\label{thm:lexicographic}
If $G$ is a graph without isolated vertices and $H$ an arbitrary graph, then 
$$(G\circ H)\pow{p}=\left\{\begin{array}{ll}
G\pow{2}\circ \overline{H}, &\text{if } p=2;\\
G\pow{p}\circ \overline{K}_{n(H)}, & \text{otherwise}.\end{array}\right.$$
\end{theorem}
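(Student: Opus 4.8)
The plan is to read off the adjacency relation of $(G\circ H)\pow{p}$ directly from the distance formula of Lemma~\ref{lem:dist}(iv). Since $G$ has no isolated vertices, $\deg_G(g_1)>0$ for every $g_1\in V(G)$, so for \emph{all} pairs of vertices the distance in $G\circ H$ is given by the first two lines of Lemma~\ref{lem:dist}(iv): it equals $d_G(g_1,g_2)$ when $g_1\ne g_2$, and equals $\min\{d_H(h_1,h_2),2\}$ when $g_1=g_2$. In particular, the distance between two vertices of a common $H$-layer never exceeds $2$, which is exactly why the regimes $p=2$ and $p\ge 3$ must be treated separately.

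First I would handle $p\ge 3$. Here $\min\{d_H(h_1,h_2),2\}\le 2<p$, so no two vertices in a common $H$-layer are at distance $p$; hence $d_{G\circ H}((g_1,h_1),(g_2,h_2))=p$ forces $g_1\ne g_2$, and is then equivalent to $d_G(g_1,g_2)=p$, i.e. to $g_1g_2\in E(G\pow{p})$, with $h_1,h_2$ arbitrary. This is precisely the adjacency relation of $G\pow{p}\circ\overline{K}_{n(H)}$, because the empty graph $\overline{K}_{n(H)}$ contributes no edges within a layer.

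Next, for $p=2$ I would split according to the two cases of Lemma~\ref{lem:dist}(iv). If $g_1\ne g_2$, then $d_{G\circ H}=2$ iff $d_G(g_1,g_2)=2$, i.e. $g_1g_2\in E(G\pow{2})$, with $h_1,h_2$ arbitrary. If $g_1=g_2$, then $d_{G\circ H}=2$ iff $\min\{d_H(h_1,h_2),2\}=2$, i.e. $h_1\ne h_2$ and $h_1h_2\notin E(H)$, i.e. $h_1h_2\in E(\overline{H})$. Combining the two cases, $(g_1,h_1)(g_2,h_2)$ is an edge of $(G\circ H)\pow{2}$ iff $g_1g_2\in E(G\pow{2})$, or $g_1=g_2$ and $h_1h_2\in E(\overline{H})$ — which is exactly the adjacency relation of $G\pow{2}\circ\overline{H}$.

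There is no deep obstacle here; the points that require care are (a) explicitly invoking the hypothesis that $G$ is isolate-free, so that the ``$\min\{\cdot,2\}$'' clause of Lemma~\ref{lem:dist}(iv) applies to every layer (without it, intra-layer distances could exceed $2$ and the $p=2$ formula would break), and (b) noting that $\overline{H}$ carries no loops, so ``$d_H(h_1,h_2)\ge 2$'' coincides with ``$h_1h_2\in E(\overline{H})$'' precisely because $h_1\ne h_2$ is built in. I would also add a short remark that $G\pow{p}$ may have isolated vertices when $p\ge 3$, but this is harmless: such a vertex $g$ produces an $H$-layer that is independent both in $(G\circ H)\pow{p}$ and in $G\pow{p}\circ\overline{K}_{n(H)}$, since any two vertices of that layer lie at distance at most $2$ in $G\circ H$.
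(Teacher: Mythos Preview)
Your proposal is correct and follows essentially the same approach as the paper's proof: both read off the adjacency relation of $(G\circ H)\pow{p}$ directly from Lemma~\ref{lem:dist}(iv), splitting into the cases $g_1=g_2$ versus $g_1\ne g_2$ and distinguishing $p=2$ from $p\ge 3$. Your version is in fact a bit more explicit than the paper's about where the isolate-free hypothesis on $G$ enters.
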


\begin{proof}
By Lemma~\ref{lem:dist}, $d_{G\circ H}((g_1,h_1),(g_2,h_2))=\min\{d_H (h_1,h_2),2\}$ if $g_1=g_2$ or $d_G (g_1,g_2)$, otherwise. 
First, if $p=2$, then two vertices $(g,h_1)$ and $(g,h_2)$ are at distance two in $G\circ H$ if and only if $h_1\neq h_2$ and they are not adjacent. 
Also, vertices $(g_1,h_1)$ and $(g_2,h_2)$, where $g_1\neq g_2$, are at distance $2$ if and only if $d_G(g_1,g_2)=2$. Consequently, $(G\circ H)\pow{2}=G\pow{2}\circ \overline{H}$.  

Second, if $p\ge3$, then no vertices $(g,h_1)$ and $(g,h_2)$ are adjacent in $(G\circ H)\pow{p}$.
Also, vertices  $(g_1,h_1)$ and $(g_2,h_2)$, where $g_1\neq g_2$, are at distance $p$ if and only if $d_G(g_1,g_2)=p$. Consequently, $(G\circ H)\pow{p}=G\pow{p}\circ \overline{K}_{n(H)}$.  
\end{proof}

We now turn to infinite graphs and state the following interesting representations of exact distance-2 graphs of infinite grids. 

\begin{proposition}
\label{prp:infinite-grids}
If $P_\infty$ is the $2$-way infinite path, then 
\begin{itemize}
\item[(1)] $(P_\infty \cp P_\infty)\pow{2} = 2(P_\infty \boxtimes P_\infty)$, and
\item[(2)] $(P_\infty \times P_\infty)\pow{2} = 4(P_\infty \boxtimes P_\infty)$.
\end{itemize}
\end{proposition}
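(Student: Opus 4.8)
The plan is to coordinatise all three graphs by $V(P_\infty)=\mathbb{Z}$, so that each of $P_\infty\cp P_\infty$, $P_\infty\boxtimes P_\infty$ and $P_\infty\times P_\infty$ has vertex set $\mathbb{Z}^2$, and to rewrite each adjacency as a condition on the difference vector $(a-c,b-d)$ of a pair of vertices $(a,b),(c,d)$. By Lemma~\ref{lem:dist}(i), $(a,b)$ and $(c,d)$ lie at distance $2$ in $P_\infty\cp P_\infty$ precisely when $|a-c|+|b-d|=2$, i.e.\ when $(a-c,b-d)$ is one of $(\pm2,0),(0,\pm2),(\pm1,\pm1)$; and by Lemma~\ref{lem:dist}(ii), $(a,b)$ and $(c,d)$ are adjacent in $P_\infty\boxtimes P_\infty$ precisely when $\max\{|a-c|,|b-d|\}=1$, i.e.\ when $(a-c,b-d)$ is one of the eight ``king moves'' $(\pm1,0),(0,\pm1),(\pm1,\pm1)$.

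For part~(1), note first that each of the eight difference vectors allowed in $(P_\infty\cp P_\infty)\pow{2}$ has even coordinate sum, so every edge of this graph joins two vertices whose coordinate sums have the same parity; hence there is no edge between $E=\{(a,b):a+b\text{ even}\}$ and $O=\{(a,b):a+b\text{ odd}\}$, and it suffices to identify each of the two induced subgraphs with $P_\infty\boxtimes P_\infty$. For that I would use the ``rotate by $45^\circ$ and rescale'' bijection $\varphi\colon E\to\mathbb{Z}^2$, $\varphi(a,b)=((a+b)/2,(a-b)/2)$, with inverse $(u,v)\mapsto(u+v,u-v)$: a direct computation shows $\varphi$ maps the eight difference vectors above onto the eight king moves (for instance $(2,0)\mapsto(1,1)$ and $(1,1)\mapsto(1,0)$), so $\varphi$ is an isomorphism from the subgraph of $(P_\infty\cp P_\infty)\pow{2}$ induced by $E$ onto $P_\infty\boxtimes P_\infty$. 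Since the subgraph induced by $O$ is the image of the one induced by $E$ under the translation $(a,b)\mapsto(a+1,b)$, it is likewise isomorphic to $P_\infty\boxtimes P_\infty$, and part~(1) follows.

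For part~(2), the preliminary step is to read off the distance function of $P_\infty\times P_\infty$ from Lemma~\ref{lem:dist}(iii): as $P_\infty$ has an $a,c$-walk of length $k$ exactly when $k\ge|a-c|$ and $k\equiv|a-c|\pmod 2$, one gets $d_{P_\infty\times P_\infty}((a,b),(c,d))=\max\{|a-c|,|b-d|\}$ when $a+b\equiv c+d\pmod 2$ (then $|a-c|$ and $|b-d|$ have equal parity, which their maximum shares), while otherwise $(a,b)$ and $(c,d)$ lie in different components. In particular $P_\infty\times P_\infty$ has no edge between $E$ and $O$, so it is the disjoint union of the two subgraphs $F_E,F_O$ it induces on $E$ and on $O$; since only vertices of one component can lie at a finite distance, $(P_\infty\times P_\infty)\pow{2}$ is the disjoint union of $F_E\pow{2}$ and $F_O\pow{2}$. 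Now the map $\varphi$ carries the four direct-product difference vectors $(\pm1,\pm1)$ onto the four grid moves $(\pm1,0),(0,\pm1)$, so $F_E\cong P_\infty\cp P_\infty$, and $F_O$ (a translate of $F_E$) is isomorphic to it; hence by part~(1) each of $F_E\pow{2},F_O\pow{2}$ is isomorphic to $(P_\infty\cp P_\infty)\pow{2}=2(P_\infty\boxtimes P_\infty)$, and therefore $(P_\infty\times P_\infty)\pow{2}\cong 4(P_\infty\boxtimes P_\infty)$.

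The whole argument is elementary; the one place that needs care — and the main potential pitfall — is the parity bookkeeping, namely verifying that the relevant exact-distance-$2$ graph really has no edge crossing between the even and odd sublattices (so that the claimed disjoint pieces are unions of components) and that $\varphi$ genuinely is a bijection of the even sublattice onto all of $\mathbb{Z}^2$ taking the prescribed difference set exactly onto the king moves. As an alternative, part~(2) can be established in one stroke by partitioning $\mathbb{Z}^2$ into the four classes $\{(a,b):a\equiv\varepsilon_1,\ b\equiv\varepsilon_2\pmod 2\}$, $\varepsilon_1,\varepsilon_2\in\{0,1\}$, and checking directly, via the distance formula above, that $(P_\infty\times P_\infty)\pow{2}$ restricted to each class is a copy of $P_\infty\boxtimes P_\infty$ while no edge joins two distinct classes.
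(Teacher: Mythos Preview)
Your proof is correct. For part~(1) your route differs from the paper's: you exhibit the explicit linear bijection $\varphi(a,b)=((a+b)/2,(a-b)/2)$ from the even sublattice onto $\mathbb{Z}^2$ and check that it carries the eight distance-$2$ difference vectors bijectively onto the eight king moves, whereas the paper instead slices the even component into diagonal layers $V_k=\{(i,j):i+j=2k\}$ and argues, by inspecting the local structure of $V_k\cup V_{k+1}$, that successive layers build up $P_\infty\boxtimes P_\infty$. Your explicit change of coordinates is shorter and gives the isomorphism in one line; the paper's approach has the mild advantage of being picture-driven and of not requiring the reader to verify the difference-set computation.

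For part~(2) the two proofs swap roles: your primary argument (show $P_\infty\times P_\infty\cong 2(P_\infty\cp P_\infty)$ via $\varphi$ and then invoke part~(1)) is exactly what the paper mentions as an alternative after its proof, while the paper's primary argument is the direct four-class partition by $(a\bmod 2,\,b\bmod 2)$ that you list as your alternative. Both are equally elementary; your reduction to~(1) is tidier once~(1) is in hand, while the paper's direct approach is self-contained and does not need the intermediate identification of $F_E$ with the Cartesian grid.
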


\begin{proof}
Throughout the proof let $V(P_\infty) = \mathbb{Z}$, so that the vertex set of each of the products considered as well as of their distance-2 graphs is $\mathbb{Z} \times \mathbb{Z}$. 

(1) A vertex $(i,j)\in \mathbb{Z} \times \mathbb{Z}$ of $P_\infty \cp P_\infty$ is adjacent to the four vertices $(i,j\pm 1)$ and $(i\pm 1, j)$. Consequently, in $(P_\infty \cp P_\infty)\pow{2}$, the vertex $(i,j)$ is adjacent to the vertices $(i\pm 1, j\pm 1)$, $(i,j\pm 2)$, and $(i\pm 2, j)$. (Note that $(P_\infty \cp P_\infty)\pow{2}$ is $8$-regular). It follows that $(P_\infty \cp P_\infty)\pow{2}$ consists of two connected components, one component being induced by the vertices $(i,j)$ such that $i+j$ is even, and the other component being induced by the vertices $(i,j)$ such that $i+j$ is odd. Let these components be called {\em even} and {\em odd}, respectively. 

Consider the even component of $(P_\infty \cp P_\infty)\pow{2}$ and for $k\in {\mathbb Z}$ set $V_k = \{(i,j):\ i+j = 2k\}$. A vertex from $V_k$ has two neighbors in $V_k$, and three neighbors in each of $V_{k-1}$ and $V_{k+1}$. Hence the set $V_k$ induces a subgraph isomorphic to $P_\infty$ and, moreover, $V_k\cup V_{k+1}$ (as well as $V_k\cup V_{k-1}$) induces a subgraph isomorphic to $P_2\boxtimes P_\infty$. This fact is illustrated in Fig.~\ref{fig:strong-layers} for $k=0$, that is, for the sets $V_0$, $V_1$, and $V_{-1}$. 

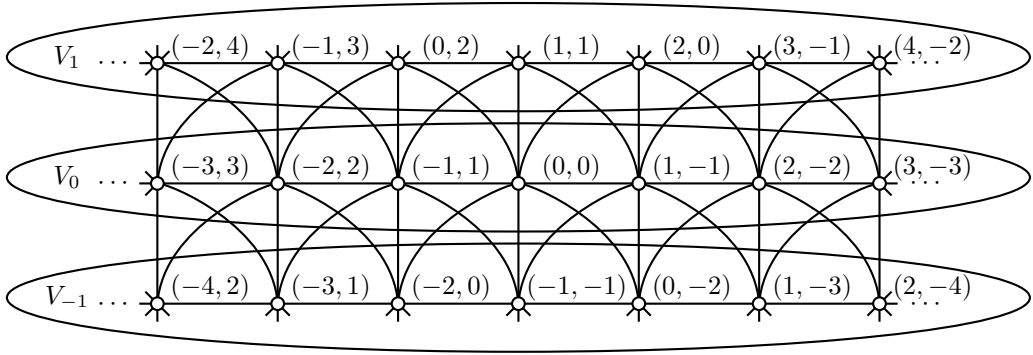
\begin{figure}[ht!]
\begin{center}
\begin{tikzpicture}[scale=0.8,style=thick]
\def\vr{3pt}
%%% define points
\path (0,0) coordinate (start);

\foreach \i in {1,2,...,7}
{
    \path (start) + (\i*2,0) coordinate (x\i);
    \path (start) + (\i*2,2) coordinate (y\i);
    \path (start) + (\i*2,4) coordinate (z\i);
}
%%% edges 
\foreach \i in {1,2,...,7}
  {\draw (x\i) -- (y\i) -- (z\i);}
\draw (x1) -- (x2) -- (x3) -- (x4) --(x5) -- (x6) -- (x7); 
\draw (y1) -- (y2) -- (y3) -- (y4) --(y5) -- (y6) -- (y7); 
\draw (z1) -- (z2) -- (z3) -- (z4) --(z5) -- (z6) -- (z7); 
\draw (x1) .. controls (2.1,1.3) and (3.8,2) .. (y2);
\draw (x2) .. controls (3.9,1.3) and (2.2,2) .. (y1);
\draw (x2) .. controls (4.1,1.3) and (5.8,2) .. (y3);
\draw (x3) .. controls (5.9,1.3) and (4.2,2) .. (y2);
\draw (x3) .. controls (6.1,1.3) and (7.8,2) .. (y4);
\draw (x4) .. controls (7.9,1.3) and (6.2,2) .. (y3);
\draw (x4) .. controls (8.1,1.3) and (9.8,2) .. (y5);
\draw (x5) .. controls (9.9,1.3) and (8.2,2) .. (y4);
\draw (x5) .. controls (10.1,1.3) and (11.8,2) .. (y6);
\draw (x6) .. controls (11.9,1.3) and (10.2,2) .. (y5);
\draw (x6) .. controls (12.1,1.3) and (13.8,2) .. (y7);
\draw (x7) .. controls (13.9,1.3) and (12.2,2) .. (y6);
\draw (y1) .. controls (2.1,3.3) and (3.8,4) .. (z2);
\draw (y2) .. controls (3.9,3.3) and (2.2,4) .. (z1);
\draw (y2) .. controls (4.1,3.3) and (5.8,4) .. (z3);
\draw (y3) .. controls (5.9,3.3) and (4.2,4) .. (z2);
\draw (y3) .. controls (6.1,3.3) and (7.8,4) .. (z4);
\draw (y4) .. controls (7.9,3.3) and (6.2,4) .. (z3);
\draw (y4) .. controls (8.1,3.3) and (9.8,4) .. (z5);
\draw (y5) .. controls (9.9,3.3) and (8.2,4) .. (z4);
\draw (y5) .. controls (10.1,3.3) and (11.8,4) .. (z6);
\draw (y6) .. controls (11.9,3.3) and (10.2,4) .. (z5);
\draw (y6) .. controls (12.1,3.3) and (13.8,4) .. (z7);
\draw (y7) .. controls (13.9,3.3) and (12.2,4) .. (z6);
\draw (z1) -- ++(0.2,0.2); \draw (z1) -- ++(0.0,0.3); 
\draw (z1) -- ++(-0.2,0.2); \draw (z1) -- ++(-0.3,0.0);
\draw (z1) -- ++(-0.2,-0.2);
\draw (z7) -- ++(0.2,0.2); \draw (z7) -- ++(0.0,0.3); 
\draw (z7) -- ++(-0.2,0.2); \draw (z7) -- ++(0.3,0.0);
\draw (z7) -- ++(0.2,-0.2);
\draw (z2) -- ++(-0.2,0.2); \draw (z2) -- ++(0,0.3);
\draw (z2) -- ++(0.2,0.2);
\draw (z3) -- ++(-0.2,0.2); \draw (z3) -- ++(0,0.3);
\draw (z3) -- ++(0.2,0.2);
\draw (z4) -- ++(-0.2,0.2); \draw (z4) -- ++(0,0.3);
\draw (z4) -- ++(0.2,0.2);
\draw (z5) -- ++(0.2,0.2);
\draw (z5) -- ++(-0.2,0.2); \draw (z5) -- ++(0,0.3);
\draw (z5) -- ++(0.2,0.2);
\draw (z6) -- ++(-0.2,0.2); \draw (z6) -- ++(0,0.3);
\draw (z6) -- ++(0.2,0.2);
\draw (y1) -- ++(-0.2,0.2); \draw (y1) -- ++(-0.3,0.0);
\draw (y1) -- ++(-0.2,-0.2);
\draw (y7) -- ++(0.2,0.2); \draw (y7) -- ++(0.3,0.0);
\draw (y7) -- ++(0.2,-0.2);
\draw (x1) -- ++(-0.2,0.2); \draw (x1) -- ++(-0.3,0.0); 
\draw (x1) -- ++(-0.2,-0.2); \draw (x1) -- ++(0,-0.3);
\draw (x1) -- ++(0.2,-0.2);
\draw (x7) -- ++(0.2,0.2); \draw (x7) -- ++(0.3,0.0); 
\draw (x7) -- ++(0.2,-0.2); \draw (x7) -- ++(0,-0.3);
\draw (x7) -- ++(-0.2,-0.2);
\draw (x2) -- ++(-0.2,-0.2); \draw (x2) -- ++(0,-0.3);
\draw (x2) -- ++(0.2,-0.2);
\draw (x3) -- ++(-0.2,-0.2); \draw (x3) -- ++(0,-0.3);
\draw (x3) -- ++(0.2,-0.2);
\draw (x4) -- ++(-0.2,-0.2); \draw (x4) -- ++(0,-0.3);
\draw (x4) -- ++(0.2,-0.2);
\draw (x5) -- ++(-0.2,-0.2); \draw (x5) -- ++(0,-0.3);
\draw (x5) -- ++(0.2,-0.2);
\draw (x6) -- ++(-0.2,-0.2); \draw (x6) -- ++(0,-0.3);
\draw (x6) -- ++(0.2,-0.2);
%%% vertices
\foreach \i in {1,2,...,7}
{
    \draw (x\i) [fill=white] circle (\vr);   
    \draw (y\i) [fill=white] circle (\vr);   
    \draw (z\i) [fill=white] circle (\vr);   
}
%%% text
\draw (x1) node[xshift=20pt, yshift=6pt]  {$(-4,2)$};
\draw (x2) node[xshift=20pt, yshift=6pt]  {$(-3,1)$};
\draw (x3) node[xshift=20pt, yshift=6pt]  {$(-2,0)$};
\draw (x4) node[xshift=22pt, yshift=6pt]  {$(-1,-1)$};
\draw (x5) node[xshift=20pt, yshift=6pt]  {$(0,-2)$};
\draw (x6) node[xshift=20pt, yshift=6pt]  {$(1,-3)$};
\draw (x7) node[xshift=20pt, yshift=6pt]  {$(2,-4)$};
\draw (y1) node[xshift=20pt, yshift=6pt]  {$(-3,3)$};
\draw (y2) node[xshift=20pt, yshift=6pt]  {$(-2,2)$};
\draw (y3) node[xshift=20pt, yshift=6pt]  {$(-1,1)$};
\draw (y4) node[xshift=20pt, yshift=6pt]  {$(0,0)$};
\draw (y5) node[xshift=20pt, yshift=6pt]  {$(1,-1)$};
\draw (y6) node[xshift=20pt, yshift=6pt]  {$(2,-2)$};
\draw (y7) node[xshift=20pt, yshift=6pt]  {$(3,-3)$};
\draw (z1) node[xshift=20pt, yshift=6pt]  {$(-2,4)$};
\draw (z2) node[xshift=20pt, yshift=6pt]  {$(-1,3)$};
\draw (z3) node[xshift=20pt, yshift=6pt]  {$(0,2)$};
\draw (z4) node[xshift=20pt, yshift=6pt]  {$(1,1)$};
\draw (z5) node[xshift=20pt, yshift=6pt]  {$(2,0)$};
\draw (z6) node[xshift=20pt, yshift=6pt]  {$(3,-1)$};
\draw (z7) node[xshift=20pt, yshift=6pt]  {$(4,-2)$};
\draw (0.5,4.1) node {$V_{1}$};
\draw (0.5,2.1) node {$V_{0}$};
\draw (0.5,0.1) node {$V_{-1}$};
\draw (1.3,0) node {$\cdots$};
\draw (1.3,2) node {$\cdots$};
\draw (1.3,4) node {$\cdots$};
\draw (14.8,0) node {$\cdots$};
\draw (14.8,2) node {$\cdots$};
\draw (14.8,4) node {$\cdots$};

\draw (8,0.1) ellipse (8.5cm and 0.9cm);
\draw (8,2.1) ellipse (8.5cm and 0.9cm);
\draw (8,4.1) ellipse (8.5cm and 0.9cm);
\end{tikzpicture}
\end{center}
\caption{Central parts of the sets $V_0$, $V_1$, and $V_{-1}$ of the even component of $(P_\infty \cp P_\infty)\pow{2}$}
\label{fig:strong-layers}
\end{figure}

By the above local strong product structure induced by the sets $V_{k}\cup V_{k+1}$, $k\in {\mathbb Z}$, we inductively conclude that the even component of $(P_\infty \cp P_\infty)\pow{2}$ is isomorphic to $P_\infty \boxtimes P_\infty$. A parallel argument applies to the odd component. This proves the first assertion of the proposition. 

\medskip
(2) A vertex $(i,j)\in \mathbb{Z} \times \mathbb{Z}$ of $P_\infty \times P_\infty$ is adjacent to the vertices $(i\pm 1,j\pm 1)$ and consequently the vertex $(i,j)$ of $(P_\infty \times P_\infty)\pow{2}$ is adjacent to the vertices $(i\pm 2, j\pm 2)$, $(i,j\pm 2)$, and $(i\pm 2, j)$. Let $X_{00} = \{(i,j):\ i,j\ {\rm even}\}$, 
$X_{01} = \{(i,j):\ i\ {\rm even}, j\ {\rm odd}\}$, 
$X_{10} = \{(i,j):\ i\ {\rm odd}, j\ {\rm even}\}$, and
$X_{11} = \{(i,j):\ i,j\ {\rm odd}\}$. Then $(P_\infty \times P_\infty)\pow{2}$ consists of four connected components $G_{ij}$, $i,j\in \{0,1\}$, where $G_{ij}$ is induced by the vertex set $X_{ij}$. It is straightforward to see that each of the $G_{ij}$ induces a subgraph of $(P_\infty \times P_\infty)\pow{2}$ isomorphic to $P_\infty \boxtimes P_\infty$, hence the second assertion of the proposition. 
\end{proof}

Formula (2) of the above proposition could also be proven in the following way. One should first observe (and prove) that the direct product $P_\infty \times P_\infty$ is isomorphic to the disjoint union of two copies of the square grid $P_\infty \cp P_\infty$, and then apply Proposition~\ref{prp:infinite-grids}(1). 

Note that in view of Proposition~\ref{prp:infinite-grids} it is obvious that $$\chi((P_\infty \cp P_\infty)\pow{2}) = \chi((P_\infty \times P_\infty)\pow{2}) = 4.$$
The graph $(P_\infty \boxtimes P_\infty)\pow{2}$ is $16$-regular, but its structure is not so transparent. Nevertheless, $\chi((P_\infty \boxtimes P_\infty)\pow{2}) = 4$ as can be demonstrated by first coloring the vertices $(i,j)$, $i, j\in \{1,2,3,4\}$, with the following pattern:

\medskip
\begin{tabular}{ c c c c c c c c}
  1 & 1 & 2 & 2 \\
  1 & 1 & 2 & 2 \\
  3 & 3 & 4 & 4 \\
  3 & 3 & 4 & 4 \\ 
\end{tabular}

\medskip\noindent
and then repeatedly extending the pattern to the whole graph $(P_\infty \boxtimes P_\infty)\pow{2}$. This pattern can be generalized to an arbitrary $p\ge 1$ to get a 4-coloring of $(P_\infty \boxtimes P_\infty)\pow{p}$ as follows:

\medskip
\begin{tabular}{ c c c c c c c c}
  1 & \ldots & 1 & 2 & \ldots & 2 \\
  \vdots &   &  \vdots  & \vdots   &        & \vdots \\ 
  1 & \ldots & 1 & 2 & \ldots & 2 \\
  3 & \ldots & 3 & 4 & \ldots & 4 \\
   \vdots &   &  \vdots  & \vdots   &        & \vdots \\ 
  3 & \ldots & 3 & 4 & \ldots & 4 \\ 
\end{tabular}

\medskip\noindent 
Hence, we infer that 
$$\chi((P_\infty \boxtimes P_\infty)\pow{p}) = 4$$ 
holds for every positive integer $p$. 

It seems intriguing to find a nice expression for $(P_\infty \cp P_\infty)\pow{p}$ and $(P_\infty \times P_\infty)\pow{p}$ when $p>2$.

%\begin{question}
%What is the chromatic number of $(P_\infty \cp P_\infty)\pow{p}$ and $(P_\infty \times P_\infty)\pow{p}$ when $p$ is even and $p>2$? TRIVIAL?
%\end{question}

%%%%%%%%%%%%%%%%%%%%%%%%%%%%%%%%%%%%%%%%%%%%%%%
%%%%%%%%%%%%%%%%%%%%%%%%%%%%%%%%%%%%%%%%%%%%%%%
\section{Connectivity}
\label{sec:connectivity}
%%%%%%%%%%%%%%%%%%%%%%%%%%%%%%%%%%%%%%%%%%%%%%%
%%%%%%%%%%%%%%%%%%%%%%%%%%%%%%%%%%%%%%%%%%%%%%%

We start with the following easy observation.

\begin{lemma}
\label{lem:rad-con}
If $G$ is a non-trivial graph and $p>\rad(G)$, then $G\pow{p}$ is not connected.
\end{lemma}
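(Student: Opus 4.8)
The plan is to produce an isolated vertex in $G\pow{p}$ and then use non-triviality to conclude that the graph is disconnected. First I would dispose of a degenerate situation: if $G$ is disconnected, then the eccentricity of every vertex is infinite, so $\rad(G)=\infty$ and the hypothesis $p>\rad(G)$ cannot hold. Hence we may assume throughout that $G$ is connected and that $\rad(G)$ is a finite nonnegative integer.

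Next, let $u\in V(G)$ be a vertex realizing the radius, that is, a vertex of eccentricity $\rad(G)$; such a vertex exists by the definition of the radius as the minimum eccentricity. Then for every $v\in V(G)\setminus\{u\}$ we have $d_G(u,v)\le \rad(G)<p$, so in particular $d_G(u,v)\ne p$. By the definition of the exact distance-$p$ graph, this means that $u$ has no neighbour in $G\pow{p}$; in other words, $u$ is an isolated vertex of $G\pow{p}$.

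Finally, since $G$ is non-trivial, $|V(G\pow{p})|=|V(G)|\ge 2$. A graph on at least two vertices that has an isolated vertex is disconnected, so $G\pow{p}$ is not connected, as required.

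As the paper itself signals by calling this an ``easy observation'', there is no genuine obstacle. The only points deserving a word of care are the two boundary cases: a disconnected $G$ (handled by the vacuity remark, since then $\rad(G)=\infty$) and a one-vertex $G$ (excluded precisely by the non-triviality hypothesis, which is what guarantees the existence of a second vertex witnessing the disconnection).
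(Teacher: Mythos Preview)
Your proof is correct and follows exactly the paper's approach: exhibit a central vertex (one of eccentricity $\rad(G)$), observe that it is isolated in $G\pow{p}$ since all other vertices are at distance strictly less than $p$, and use non-triviality to conclude disconnectedness. The paper's own justification is a single sentence to this effect; your added remarks on the disconnected and one-vertex boundary cases are fine but not needed for the paper's level of detail.
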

Indeed, if $p>\rad(G)$, then every vertex whose eccentricity equals $\rad(G)$ is an isolated vertex of $G\pow{p}$.

\begin{theorem}
Let $G$ and $H$ be connected graphs with $\rad(G)\ge \rad(H)$, and let $p\ge 2$. The graph $(G\boxtimes H)\pow{p}$ is connected if and only if the following conditions hold:
\begin{itemize}
\item[(1)] $\rad(G)\ge p$, and
\item[(2)]$G\pow{p}$ is connected or $\diam(H)\ge p$.
\end{itemize}
\end{theorem}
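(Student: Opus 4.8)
The plan is to argue directly from the distance formula $d_{G\boxtimes H}((g_1,h_1),(g_2,h_2))=\max\{d_G(g_1,g_2),d_H(h_1,h_2)\}$ of Lemma~\ref{lem:dist}(ii) (one could instead unwind Theorem~\ref{thm:strong}, but the metric statement is cleaner here). Throughout I may assume $G$ is non-trivial; otherwise $\rad(G)\ge\rad(H)$ forces $G=H=K_1$, a case the statement tacitly excludes (there $(G\boxtimes H)\pow{p}=K_1$ is connected while~(1) fails). The engine of the proof is the following claim, which uses only hypothesis~(1).

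\emph{Vertical layers.} If $\rad(G)\ge p$, then for every $g_0\in V(G)$ all vertices of the layer $\{g_0\}\times V(H)$ lie in one component of $(G\boxtimes H)\pow{p}$. Indeed, since $H$ is connected it suffices to join $(g_0,h)$ to $(g_0,h')$ whenever $hh'\in E(H)$. As $\rad(G)\ge p$, the eccentricity of $g_0$ in $G$ is at least $p$, so $G$ connected yields a vertex $g_1$ with $d_G(g_0,g_1)=p$; then $(g_0,h)(g_1,h)$ and $(g_1,h)(g_0,h')$ are edges of $(G\boxtimes H)\pow{p}$, because $\max\{p,0\}=p$ and $\max\{p,1\}=p$ (recall $p\ge 2$), giving the desired path.

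For \emph{sufficiency}, assume (1) and (2). By the claim it remains to show that all vertical layers $\{g\}\times V(H)$, $g\in V(G)$, lie in one component. If $G\pow{p}$ is connected, then for any edge $uv$ of $G\pow{p}$ (i.e.\ $d_G(u,v)=p$) and any $h$, the vertices $(u,h),(v,h)$ are at distance $\max\{p,0\}=p$ in $G\boxtimes H$, so the layers over $u$ and $v$ are joined; connectedness of $G\pow{p}$ then puts all layers in one component. If instead $\diam(H)\ge p$, fix $h^*,h^{**}$ with $d_H(h^*,h^{**})=p$; for any edge $uv$ of $G$ the vertices $(u,h^*),(v,h^{**})$ are at distance $\max\{1,p\}=p$, so the layers over $u$ and $v$ are joined, and connectedness of $G$ puts all layers in one component. (If $\diam(H)<p$ — in particular if $|V(H)|=1$ — then (2) forces $G\pow{p}$ connected, so the first case applies.)

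For \emph{necessity}, assume $(G\boxtimes H)\pow{p}$ is connected. If $\rad(G)<p$, choose $g_0,h_0$ of minimum eccentricity in $G,H$; then every vertex is at distance $\max\{d_G(g_0,\cdot),d_H(h_0,\cdot)\}\le\max\{\rad(G),\rad(H)\}=\rad(G)<p$ from $(g_0,h_0)$, so $(g_0,h_0)$ is isolated — impossible, since $G\boxtimes H$ has at least two vertices. (Equivalently, $\rad(G\boxtimes H)=\rad(G)$ and Lemma~\ref{lem:rad-con} applies.) This gives (1). For (2), suppose for contradiction that $G\pow{p}$ is disconnected and $\diam(H)<p$. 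Any edge $(g_1,h_1)(g_2,h_2)$ of $(G\boxtimes H)\pow{p}$ satisfies $d_H(h_1,h_2)\le\diam(H)<p$, hence $d_G(g_1,g_2)=p$, i.e.\ $g_1g_2\in E(G\pow{p})$; thus the first-coordinate projection sends walks of $(G\boxtimes H)\pow{p}$ to walks of $G\pow{p}$. Taking $g,g'$ in distinct components of $G\pow{p}$ and any $h\in V(H)$, no walk of $(G\boxtimes H)\pow{p}$ joins $(g,h)$ to $(g',h)$, a contradiction; hence (2) holds.

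The only genuinely non-routine point is the vertical-layer claim — realizing that hypothesis~(1) by itself already lets one move freely within every $H$-layer, via a two-edge detour through a vertex at distance exactly $p$ in $G$ (this is also why the hypothesis is about $\rad(G)$, the larger radius, rather than $\rad(H)$). After that, sufficiency is just the observation that adding edges to a connected graph preserves connectedness (applied to $G$, resp.\ to $G\pow{p}$), and necessity is the standard projection argument together with Lemma~\ref{lem:rad-con}.
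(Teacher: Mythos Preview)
Your proof is correct and follows essentially the same approach as the paper's: necessity via Lemma~\ref{lem:rad-con} plus the first-coordinate projection argument, and sufficiency by linking $H$-layers through vertices at distance exactly $p$ in the appropriate factor. The only difference is organizational---you isolate the within-layer connectivity claim once (using hypothesis~(1) alone) and then treat the two branches of~(2) uniformly, whereas the paper splits sufficiency into the cases $\diam(H)\ge p$ versus $G\pow{p}$ connected with $\diam(H)<p$ and handles the layer connectivity separately in each.
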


\begin{proof}
First, suppose that $(G\boxtimes H)\pow{p}$ is connected. Since $\rad(G\boxtimes H)=\max\{\rad(G),\rad(H)\}=\rad(G)$, it follows, by applying Lemma~\ref{lem:rad-con}, that $\rad(G)\ge p$. Suppose next that condition {\em (2)} does not hold, that is, $G\pow{p}$ is not connected and $\diam(H)< p$.  Let $P$ be a shortest path between $(g,h)$ and $(g',h')$ of length $p$ in $G\boxtimes H$. Then, since $\diam(H)<p$, the projection of $P$ on $G$ is a (shortest) $g,g'$-path of length $p$ in $G$. 
In other words, starting from a vertex $(g,h)$ one can reach by shortest paths of length $p$ in $G\boxtimes H$ only the vertices in the layers $^{g'}\!H$, where $d_G(g,g')=p$. Hence, if $g_1$ and $g_2$ are vertices that belong to different connected components of $G\pow{p}$, and $h$ is an arbitrary vertex of $H$, then $(g_1,h)$ and $(g_2,h)$ belong to different connected components of $(G\boxtimes H)\pow{p}$.

For the converse, assume that conditions {\em (1)} and {\em (2)} hold. We distinguish two cases.

In the first case, suppose that $\rad(G)\ge p$, and $\diam(H)\ge p$. Let $(g,h)$ be a vertex in $G\boxtimes H$. In the same way as in the first paragraph we can show that all the vertices in $^{g}\!H$ are in the same connected component of $(G\boxtimes H)\pow{p}$. Let $g'g\in E(G)$ for some $g'\in V(G)$, and let $h$ and $h'$ be vertices in $H$ at distance $p$ (as $p\le \diam(H)$ such two vertices exist). Let $h''$ be a neighbor of $h$ that lies on a shortest $h,h'$-path. Note that $d_{G\boxtimes H}[(g,h),(g',h')]=p$, where the neighbor on a shortest $(g,h),(g',h')$-path is $(g,h'')$. Hence $(g',h')$ is in the same connected component of $(G\boxtimes H)\pow{p}$ as all the vertices of $^{g}\!H$. By the same reasoning as before, all vertices in $^{g}\!H$ are in the same component as $(g',h)$ of $(G\boxtimes H)\pow{p}$. As $G$ is connected, an inductive argument implies that all $H$-layers are in one and the same component. 

In the second case, let $G\pow{p}$ be connected (and $\rad(G)\ge p$). By excluding the first case, suppose moreover that $\diam(H)<p$.
Let $(g,h)$ be a vertex in $G\boxtimes H$, and let $g'\in V(G)$ be a neighbor of $g$ in $G\pow{p}$. Hence, all vertices from the layer $^{g'}\!H$ are adjacent in $(G\boxtimes H)\pow{p}$ to the vertex $(g,h)$. In turn, by reversing the roles of $g$ and $g'$, all vertices in $^{g}\!H$ is adjacent to all vertices from the layer $^{g'}\!H$. Since $G\pow{p}$ is connected, an inductive arguments yields that $(G\boxtimes H)\pow{p}$ is connected. 
\end{proof}

The situation of the lexicographic product is the following.

\begin{proposition}
If $p\ge 1$ and $G$ is a non-trivial graph, then $(G\circ H)\pow{p}$ is connected if and only if $G\pow{p}$ is connected.
\end{proposition}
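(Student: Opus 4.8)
The plan is to deduce the proposition from an elementary fact about lexicographic products: if $F$ is a graph with $n(F)\ge 2$ and $K$ is an arbitrary graph, then $F\circ K$ is connected if and only if $F$ is connected. Granting this, suppose first that $G$ has no isolated vertex. Then Theorem~\ref{thm:lexicographic} identifies $(G\circ H)\pow{p}$ with $G\pow{p}\circ K$, where $K=\overline{H}$ if $p=2$ and $K=\overline{K}_{n(H)}$ otherwise, and since $n(G\pow{p})=n(G)\ge 2$ the auxiliary fact applied with $F=G\pow{p}$ yields precisely the desired equivalence. It remains to cover the possibility that $G$, though non-trivial, has an isolated vertex $v$. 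In that case $d_G(v,x)\in\{0,\infty\}$ for every $x\in V(G)$, so $p\ge 1$ implies that $v$ is isolated in $G\pow{p}$; as $n(G\pow{p})=n(G)\ge 2$, the graph $G\pow{p}$ is disconnected. Moreover, by Lemma~\ref{lem:dist}(iv) with $\deg_G(v)=0$, every vertex of the layer ${}^{v}H$ is at infinite distance in $G\circ H$ from every vertex outside ${}^{v}H$ — and such vertices exist because $n(G)\ge 2$ — so no edge of $(G\circ H)\pow{p}$ leaves ${}^{v}H$, and $(G\circ H)\pow{p}$ is disconnected as well. Hence in this case both sides of the biconditional are false, and we are done.

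It then remains to prove the auxiliary fact. For the implication that connectivity of $F\circ K$ forces connectivity of $F$, I argue contrapositively: by definition of the lexicographic product, every edge of $F\circ K$ either projects onto an edge of $F$ or joins two vertices lying over a common vertex of $F$, so every walk of $F\circ K$ projects onto a walk of $F$; consequently, if $F$ is disconnected then no vertex lying over one component of $F$ can be joined in $F\circ K$ to a vertex lying over another, so $F\circ K$ is disconnected. For the converse, let $F$ be connected with $n(F)\ge 2$ and take $(f_1,k_1),(f_2,k_2)\in V(F\circ K)$. If $f_1\ne f_2$, lift an $f_1,f_2$-path of $F$ to a walk of $F\circ K$ whose last vertex is $(f_2,k_2)$. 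If $f_1=f_2=f$, pick a neighbour $f'$ of $f$ in $F$ (it exists since $F$ is connected and has at least two vertices) and use the walk $(f,k_1),(f',k_2),(f,k_2)$; its two consecutive pairs are edges of $F\circ K$ because $ff'\in E(F)$. Thus $F\circ K$ is connected.

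I do not expect a substantial obstacle. The one point that genuinely needs care is the mismatch between the hypothesis of the proposition ($G$ non-trivial) and that of Theorem~\ref{thm:lexicographic} ($G$ isolate-free), which is exactly why the isolated-vertex case is treated by hand before the theorem is invoked. Degenerate situations — $H$ edgeless, or $G\pow{p}$ edgeless on at least two vertices — require nothing extra, since the auxiliary fact places no condition on $K$, and any edgeless graph on at least two vertices is disconnected.
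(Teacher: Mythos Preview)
Your argument is correct and takes a genuinely different route from the paper. The paper proceeds by a direct case analysis on $p$: it treats $p=1$ via the folklore fact on connectivity of lexicographic products, handles $p=2$ by tracking where shortest paths of length $2$ can end (using the distance formula), and dispatches $p\ge 3$ by observing that shortest paths of length $p$ in $G\circ H$ project to shortest paths in $G$. You instead reduce everything to the structural identification of $(G\circ H)\pow{p}$ as a lexicographic product over $G\pow{p}$ (Theorem~\ref{thm:lexicographic}) together with the single auxiliary fact that $F\circ K$ is connected if and only if $F$ is connected when $n(F)\ge 2$. This is cleaner and more uniform, and your explicit treatment of the isolated-vertex case closes a gap that the paper's proof leaves implicit (the paper's distance arguments tacitly rely on $G$ being isolate-free).

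One small wrinkle: Theorem~\ref{thm:lexicographic} is only proved in the paper for $p\ge 2$ (its ``otherwise'' clause is established in the proof only for $p\ge 3$, and indeed the formula is false for $p=1$ when $H$ has an edge). So your appeal to it for $p=1$ is formally unjustified. This is harmless, since for $p=1$ the proposition \emph{is} your auxiliary fact applied directly with $F=G$ and $K=H$; just say so separately rather than routing it through Theorem~\ref{thm:lexicographic}.
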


\begin{proof}
The assertion for $p=1$ follows, since $G\circ H$ is connected if and only if $G$ is connected. 

Let $p=2$. Suppose that $G\pow{2}$ is not connected. Note that any shortest path of length $2$ from a vertex in $^{g}\!H$ either ends in the same layer or in a layer $^{g'}\!H$, where $gg'\in G\circ H\pow{2}$. Hence $G\pow{2}$ is not connected, implies that $(G\circ H)\pow{p}$ is not connected., Assume conversely that $G\pow{2}$ is connected. Let $(g,h)$ and $(g,h')$ be arbitrary vertices from $^{g}\!H$. If $hh'\notin E(H)$, then $d_{G\circ H}[(g,h),(g,h')]=2$, which implies that $(g,h)$ and $(g,h')$ are in the same component of $G\pow{2}$. Now, let $hh'\in E(H)$. Since $G\pow{2}$ is connected, $\rad(G)\ge 2$. Hence, there exists a vertex $g'\in V(G)$ with $d_G(g,g')=2$. Then, $d_{G\circ H}[(g,h),(g',h)]=d_{G\circ H}[(g',h),(g,h')]=2$, which implies that $(g,h)$ and $(g,h')$ are in the same component of $(G\circ H)\pow{p}$. The above two cases imply that all vertices from $^{g}\!H$ are in the same component of $(G\circ H)\pow{p}$. 
Because $G\pow{p}$ is connected, inductive argument yields that $(G\circ H)\pow{p}$ is connected. 

Finally, let $p\ge 3$. The projection to $G$ of any shortest path in $G\circ H$ of length $p$ is a shortest path in $G$ of the same length. From this observation the assertion follows immediately.
\end{proof}

\begin{proposition}
Let $G$ and $H$ be connected graphs. Then,
\begin{itemize}
\item[(a)] $(G\cp H)\pow{2}$ is connected if and only if one of $G$ or $H$ is non-bipartite.
\item[(b)] $(G\times H)\pow{2}$ is connected if and only if $G\pow{2}$ and $H\pow{2}$ are connected.
\end{itemize}
\end{proposition}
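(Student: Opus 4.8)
The plan is to treat the two parts separately, exploiting the distance formulas of Lemma~\ref{lem:dist} together with the structural formulas for the exact distance-$2$ graphs established in Theorems~\ref{thm:cart} and~\ref{thm:direct}, and to reduce both statements to the bipartite dichotomy of Lemma~\ref{lem:zieg}.

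For part (a), I would first dispose of the case where both $G$ and $H$ are bipartite: then $G\cp H$ is bipartite, so by Lemma~\ref{lem:zieg}(i) the graph $(G\cp H)\pow{2}$ is disconnected. For the converse, suppose without loss of generality that $G$ is non-bipartite, so $G$ contains an odd cycle and hence $G$ has two adjacent vertices $g,g'$ joined also by a path of odd length (equivalently $g,g'$ lie at distance $1$ and are connected by a walk of length $2$ through a common vertex after one step along the odd cycle); more to the point, in a non-bipartite connected graph $G$ the graph $G\dpow{2}$ is connected and in fact $G\pow{2}\uplus G$ is connected, since one can get from any vertex to any neighbour via a walk of length $2$ around an odd closed walk. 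I would use this to show $(G\cp H)\pow{2}$ is connected: by Theorem~\ref{thm:cart}, $(G\cp H)\pow{2}=(G\pow{2}\cp H\pow{0})\uplus(G\pow{0}\cp H\pow{2})\uplus(G\pow{1}\times H\pow{1})$. The first summand contributes, inside each layer $V(G)\times\{h\}$, a copy of $G\pow{2}$; the middle summand connects $(g,h)$ to $(g',h')$ whenever $gg'\in E(G)$ and $hh'\in E(H)$. Combining these, one walks freely along $G\pow{2}$ within a layer and, using an edge of $G$ together with an edge of $H$ (here connectivity of $H$ is used to guarantee $H$ has an edge unless $H=K_1$, a case handled directly), crosses between layers; the odd cycle in $G$ lets one ``flip parity'' so that the union of $G\pow{2}$ with the cross edges indeed reaches every vertex. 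The remaining routine point is to verify this reachability argument carefully, chasing the parity of $d_G(g_1,g_2)+d_H(h_1,h_2)$.

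For part (b), the forward direction is easy: $G\pow{2}$ is an induced subgraph of $(G\times H)\pow{2}$ on any set $V(G)\times\{h\}$ only after we check no edges leave, but more cleanly, one observes that if $G\pow{2}$ is disconnected then the ``$G$-coordinate parity-class'' structure forces $(G\times H)\pow{2}$ to be disconnected; this follows from Theorem~\ref{thm:direct}, since every summand $G\dpow{2}\cp H\dpow{2}$, $G\dpow{2}\times H\pow{2}$, $G\pow{2}\times H\dpow{2}$ moves the $G$-coordinate either not at all, by a length-$2$ walk, or by a vertex at $G$-distance $2$, hence always within a single connected component of $G\dpow{2}$ (note $G\dpow 2\supseteq G\pow 2$ but has the same vertex classes reachable). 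So if $x,y$ lie in different components of $G\pow{2}$ (equivalently of $G\dpow 2$ when $G$ is considered up to the relevant equivalence) then $(x,h)$ and $(y,h)$ are separated. The converse is the substantive direction: assuming $G\pow{2}$ and $H\pow{2}$ are both connected, show $(G\times H)\pow{2}$ is connected. I would again use the formula $(G\times H)\pow{2}=(G\dpow{2}\cp H\dpow{2})\uplus(G\dpow{2}\times H\pow{2})\uplus(G\pow{2}\times H\dpow{2})$. Within a fixed first coordinate one moves in $H\dpow{2}$, within a fixed second coordinate one moves in $G\dpow{2}$, and since $G\pow{2}\subseteq G\dpow{2}$ and $H\pow{2}\subseteq H\dpow{2}$, connectivity of $G\pow{2}$ and $H\pow{2}$ gives connectivity of $G\dpow{2}$ and $H\dpow{2}$; hence from $(g_1,h_1)$ one reaches $(g_2,h_1)$ through the $G\dpow{2}$-layer and then $(g_2,h_2)$ through the $H\dpow{2}$-layer, provided these layers are genuinely present as subgraphs — which is exactly what the first summand $G\dpow{2}\cp H\dpow{2}$ provides.

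The main obstacle I anticipate is the careful bookkeeping of \emph{which} union summand supplies \emph{which} edges, and in particular making sure the ``mixed'' layers one walks along are really induced by $G\dpow{2}\cp H\dpow{2}$ rather than only by the $\times$-summands (the latter alone need not be connected even when $G\pow 2,H\pow 2$ are). In part (a) the analogous subtlety is that $G\pow{2}$ by itself may be disconnected (e.g. $G$ an odd cycle of length $\ge 7$ gives $G\pow 2\cong G$, connected, but $G=C_5$ gives $G\pow 2\cong C_5$ too — fine — whereas a general non-bipartite $G$ could have $G\pow 2$ disconnected), so one genuinely must use the cross edges from the $G\pow 1\times H\pow 1$ term together with an odd cycle to re-link the components; isolating and proving this ``odd-cycle relinking'' claim is the crux.
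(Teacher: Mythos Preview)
Your treatment of (a) is correct but harder than needed. The paper simply notes, via Theorem~\ref{thm:cart}, that the summand $G\pow{1}\times H\pow{1}=G\times H$ is a \emph{spanning} subgraph of $(G\cp H)\pow{2}$ and then cites Weichsel's theorem: for connected $G,H$, the direct product $G\times H$ is connected precisely when not both factors are bipartite. Your ``odd-cycle relinking'' argument would, if carried out, amount to reproving Weichsel's theorem; a one-line citation replaces it. Your handling of the both-bipartite case via Lemma~\ref{lem:zieg} is the natural necessity argument.

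For (b), your backward implication matches the paper's: it records that $G\pow{2}\boxtimes H\pow{2}$ is a spanning subgraph of $(G\times H)\pow{2}$ (using Theorem~\ref{thm:direct} together with $E(G\pow{2})\subseteq E(G\dpow{2})$) and appeals to the fact that a strong product is connected iff both factors are. Your forward implication, however, has a genuine gap. You claim that $x,y$ lying in different components of $G\pow{2}$ is ``equivalently'' their lying in different components of $G\dpow{2}$, and that $G\dpow{2}$ ``has the same vertex classes reachable'' as $G\pow{2}$. This is false: for $G=K_3$ one has $G\pow{2}$ edgeless while $G\dpow{2}\cong K_3$ is connected. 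Since every summand in Theorem~\ref{thm:direct} moves the first coordinate only within a component of $G\dpow{2}$, your separation argument tracks $G\dpow{2}$-components, not $G\pow{2}$-components. The paper's own proof is equally brief on this direction (the spanning-subgraph observation yields only the backward implication), and in fact $G=H=K_3$ gives $(K_3\times K_3)\pow{2}\cong K_3\cp K_3$, which is connected although $K_3\pow{2}$ is not; so the forward implication as stated cannot be recovered along either line without an additional hypothesis.
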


\begin{proof} 
(a) Theorem~\ref{thm:cart} implies that $G\times H$ is a spanning subgraph of $(G\cp H)\pow{2}$. The result now follows by Weichel's theorem~\cite{WEICHSEL} asserting that $G\times H$ is connected if and only if $G$ and $H$ are connected and at least one of them is not bipartite. 

(b) By Theorem~\ref{thm:direct}, $(G\times H)\pow{2}$ contains $G\pow{2}\boxtimes H\pow{2}$ as a spanning subgraph (because $E(G\pow{2})\subseteq E(G\dpow{2})$, for any graph $G$).
The claim now follows because the strong product is connected if and only if both factor graphs are connected, see~\cite{HAMMACK}.
\end{proof}

The connectivity of $(G\cp H)\pow{p}$ and of $(G\times H)\pow{p}$ where $p\ge 3$ seems an intriguing open question. In the next result we solve it for the particular case of {\em hypercubes} $Q_d$, where $Q_1 = K_2$ and $Q_d = Q_{d-1}\cp K_2$ for $d\ge 2$. 

\begin{theorem}
\label{thm:hypercubes-connectivity}
Let $d\ge 2$ and $1\le p < d$. Then $Q_d\pow{p}$ is connected if and only if $p$ is odd.  
\end{theorem}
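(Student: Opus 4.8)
The plan is to analyze connectivity of $Q_d\pow{p}$ by working with the natural coordinatization $V(Q_d) = \{0,1\}^d$, where $d_{Q_d}(x,y)$ equals the Hamming distance, i.e. the number of coordinates in which $x$ and $y$ differ. Thus two vertices are adjacent in $Q_d\pow{p}$ precisely when they differ in exactly $p$ coordinates. Since $Q_d$ is bipartite (with bipartition classes given by the parity of the coordinate sum), Lemma~\ref{lem:zieg}(i) immediately gives the ``only if'' direction: if $p$ is even, then $Q_d\pow{p}$ is disconnected. So the whole content is the ``if'' direction: for $p$ odd and $p < d$, the graph $Q_d\pow{p}$ is connected.

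For the ``if'' direction, I would show that from any vertex one can reach the all-zeros vertex $\mathbf{0}$ by a walk in $Q_d\pow{p}$. Equivalently, letting $S_x \subseteq \{1,\dots,d\}$ be the support (set of $1$-coordinates) of a vertex $x$, a single step in $Q_d\pow{p}$ replaces $S_x$ by the symmetric difference $S_x \triangle T$ for some $T$ with $|T| = p$. The goal is to drive $|S_x|$ down to $0$. The key subclaim is: if $|S_x| = s \ge 1$, we can, in one step, pass to a vertex whose support has size strictly smaller than $s$ whenever $s$ is large enough, and otherwise we need a two- or three-step maneuver. Concretely, choosing $T$ to consist of $\min(s,p)$ elements inside $S_x$ and the remaining $p - \min(s,p)$ elements outside $S_x$ (this is where $p < d$ is used, to guarantee enough coordinates are available outside a support of size $\le p$), one step changes $|S_x|$ from $s$ to $|s - 2\min(s,p)| + \bigl(p - \min(s,p)\bigr)$. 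I would check that iterating this, possibly with a short case analysis for small residual support sizes (like $s \in \{1,2,\dots\}$ near $p$), always eventually reaches $0$; the parity obstruction never bites here precisely because $p$ is odd, so each step flips the parity of $|S_x|$ and we are not trapped in a single parity class. An alternative, cleaner route: first show $Q_d\pow{1} = Q_d$ is connected and that for odd $p$ one can simulate a single Hamming-distance-$1$ move by a bounded-length walk in $Q_d\pow{p}$ (e.g. combine moves that toggle overlapping $p$-sets so the net effect is toggling one coordinate); this reduces everything to connectivity of $Q_d$ itself.

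I expect the main obstacle to be the boundary bookkeeping: making the reduction argument uniform across all support sizes $s$, especially the cases where $s$ is close to $p$ or close to $d$, and verifying that the constraint $p < d$ (rather than $p \le d$) is exactly what is needed to always have spare coordinates outside the current support. The case $p = d$ genuinely fails — $Q_d\pow{d}$ is a perfect matching — so the argument must visibly break there, and the write-up should make clear precisely where $p < d$ enters. The parity/odd hypothesis should be handled cleanly by observing that the simulation of a distance-$1$ move requires an odd number of coordinate toggles to have odd total, matching $p$ odd; for $p$ even this is impossible, consistent with Lemma~\ref{lem:zieg}.
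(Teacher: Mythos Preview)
Your proposal is correct, and your ``alternative, cleaner route''---simulating a single Hamming-distance-$1$ move by a bounded walk in $Q_d\pow{p}$---is exactly the paper's approach. The paper makes it explicit: using edge-transitivity of $Q_d$, it suffices to connect $0^d$ to some weight-$1$ vertex, and this is done via the concrete walk $x_0=0^d$, $x_1=1^p0^{d-p}$, $x_2=10^{p-1}10^{d-p-1}$, $x_3=01^{p-2}0^{d-p+1}$, then one more step to a vertex of weight $p\pm 1$, from which a weight-$1$ vertex is reached. The step $x_1\to x_2$ is exactly where $p<d$ is needed, matching your prediction. Your first route (iteratively shrinking the support) also works, but two small points: the displayed formula is off---the new support size after flipping $\min(s,p)$ ones and the rest zeros is simply $s+p-2\min(s,p)$---and the greedy choice can oscillate for small $s$ (e.g.\ $s=1$, $p=3$ gives $1\to 2\to 1\to\cdots$), so the ``short case analysis'' you anticipate is genuinely required; going directly to the distance-$1$ simulation, as the paper does, avoids that bookkeeping.
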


\begin{proof}
If $p$ is even, then $Q_d\pow{p}$ is disconnected by Lemma~\ref{lem:zieg}(a).

Assume now that $p$ is odd. The case $p=1$ is trivial, hence assume in the rest that $p\ge 3$. To prove that $Q_d\pow{p}$ is connected it suffices to show that in $Q_d\pow{p}$ there exists a path from the vertex $0^d$ to a vertex with exactly one bit $1$. Indeed, if this is proved, then since $Q_d$ is edge-transitive, every pair of adjacent vertices of $Q_d$ is connected by a path in $Q_d\pow{p}$. Consequently, as $Q_d$ is connected, $Q_d\pow{p}$ is also connected.

Clearly, the vertex $x_0 = 0^d$ is adjacent in $Q_d\pow{p}$ to the vertex $x_1 = 1^p0^{d-p}$, which is in turn adjacent to the vertex $x_2 = 10^{p-1}10^{d-p-1}$. By changing the first $p-1$ bits and the $(p+1)^{\rm st}$ bit of $x_2$ we arrive to the vertex $x_3 = 01^{p-2}0^{d-p+1}$. Since $p-2$ is odd, we can write $p = p_1 + p_2$, where $p_1 - p_2 = 1$. Let $x_4$ be a neighbor of $x_3$ in $Q\pow{p}$ obtained from $x_3$ by changing $p_2$ of its $1$s into $0$s, and hence $p-p_2$ zero bits of $x_3$ into $1$s. In this way $x_4$ contains $p_1 + (p-p_2) = p+1$ bits equal to $1$. Now $x_4$ is in $Q_d\pow{p}$ adjacent to $p+1$ vertices each of which has exactly one bit $1$.  
\end{proof}

%%%%%%%%%%%%%%%%%%%%%%%%%%%%%%%%%%%%%%%%%%%%%%%
%%%%%%%%%%%%%%%%%%%%%%%%%%%%%%%%%%%%%%%%%%%%%%%
\section{Exact distance graphs of hypercubes}
\label{sec:hypercubes}
%%%%%%%%%%%%%%%%%%%%%%%%%%%%%%%%%%%%%%%%%%%%%%%
%%%%%%%%%%%%%%%%%%%%%%%%%%%%%%%%%%%%%%%%%%%%%%%

In this section we first describe the structure of exact distance graphs of hypercubes by showing that they contains copies of some generalized Johnson graphs.
Afterward, we combine known results and new ones about the chromatic number of generalized Johnson graphs to derive upper bounds for the chromatic number of some exact distance graphs of hypercubes.

The {\em generalized Johnson graph} $J(n,k,i)$ (where $i\le k\le n$) is the graph with the set $\{A\subseteq \{1,\ldots,n\}:\ |A|=k\}$ and edge set $\{AB :\ |A\cap B| = i\}$. The family of generalized Johnson graphs includes Kneser graphs $K(n,k)=J(n,k,0)$ (which themselves include the odd graphs $J(2k+1,k,0)$) and the Johnson graphs $J(n,k,k-1)$)~\cite{AGON}.

\subsection{On the structure of exact distance graphs of hypercubes}

For even distance, the structure of the exact distance graph of the hypercube is known.
\begin{proposition}{\rm (\cite{ZIEGLER})}
\label{prop:q2}
$Q_n\pow{2p}=2 (Q_{n-1}\pow{2p}\uplus Q_{n-1}\pow{2p-1})$. 
\end{proposition}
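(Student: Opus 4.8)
The plan is to realize $Q_n$ as $Q_{n-1}\cp K_2$ and split the vertex set $V(Q_n)=V(Q_{n-1})\times\{0,1\}$ according to the last coordinate. For two vertices $(u,a)$ and $(v,b)$ of $Q_{n-1}\cp K_2$, Lemma~\ref{lem:dist}(i) gives $d_{Q_n}((u,a),(v,b))=d_{Q_{n-1}}(u,v)+d_{K_2}(a,b)$, and $d_{K_2}(a,b)$ is $0$ when $a=b$ and $1$ when $a\ne b$. So the pair is at distance $2p$ in $Q_n$ precisely in two mutually exclusive ways: either $a=b$ and $d_{Q_{n-1}}(u,v)=2p$, or $a\ne b$ and $d_{Q_{n-1}}(u,v)=2p-1$. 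The first case contributes, inside each of the two $Q_{n-1}$-layers, a copy of $Q_{n-1}\pow{2p}$ with no edges between the layers; the second case contributes, between the two layers, exactly the adjacencies of $Q_{n-1}\pow{2p-1}$ (vertex $(u,0)$ joined to $(v,1)$ iff $d_{Q_{n-1}}(u,v)=2p-1$).

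Next I would assemble these pieces. Writing the layers as $L_0$ and $L_1$, each isomorphic to $Q_{n-1}$, the graph $Q_n\pow{2p}$ decomposes as (edges inside $L_0$) $\cup$ (edges inside $L_1$) $\cup$ (edges between $L_0$ and $L_1$), which by the case analysis equals a disjoint-union-of-edge-sets description: $Q_{n-1}\pow{2p}$ on $L_0$, $Q_{n-1}\pow{2p}$ on $L_1$, plus a bipartite graph between $L_0$ and $L_1$ whose adjacency rule is that of $Q_{n-1}\pow{2p-1}$. The key structural point is then that this whole configuration splits into two connected components. Indeed $Q_{n-1}$ is bipartite; let its bipartition classes be $A$ and $B$. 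Because $2p$ is even and $2p-1$ is odd, every edge of $Q_{n-1}\pow{2p}$ stays within $A$ or within $B$ (Lemma~\ref{lem:zieg}(ii) applied to the odd case tells us the complementary fact, and directly for even powers the distance-$2p$ relation preserves the class), while every edge of $Q_{n-1}\pow{2p-1}$ goes between $A$ and $B$. Hence the vertex set of $Q_n\pow{2p}$ partitions into $(A\times\{0\})\cup(B\times\{1\})$ and $(B\times\{0\})\cup(A\times\{1\})$ with no edges between the two parts.

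Finally I would identify each part. Take the part $(A\times\{0\})\cup(B\times\{1\})$. Inside $A\times\{0\}$ we see the subgraph of $Q_{n-1}\pow{2p}$ induced on $A$; inside $B\times\{1\}$ we see the subgraph of $Q_{n-1}\pow{2p}$ induced on $B$; and between them we see the subgraph of $Q_{n-1}\pow{2p-1}$ induced by the bipartition pair $(A,B)$ — but since \emph{all} edges of $Q_{n-1}\pow{2p-1}$ run between $A$ and $B$, that induced bipartite subgraph is all of $Q_{n-1}\pow{2p-1}$. Likewise the two induced pieces of $Q_{n-1}\pow{2p}$ on $A$ and on $B$ together account for \emph{all} edges of $Q_{n-1}\pow{2p}$. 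Thus this component is exactly $Q_{n-1}\pow{2p}\uplus Q_{n-1}\pow{2p-1}$ (same vertex set $V(Q_{n-1})$, edge set the union), via the obvious bijection sending $(u,0)\mapsto u$ for $u\in A$ and $(v,1)\mapsto v$ for $v\in B$. The other component is isomorphic by the symmetric argument, giving $Q_n\pow{2p}=2\bigl(Q_{n-1}\pow{2p}\uplus Q_{n-1}\pow{2p-1}\bigr)$.

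The main obstacle I anticipate is bookkeeping rather than depth: one must be careful that the $\uplus$ on the right is formed on the single vertex set $V(Q_{n-1})$ and that the isomorphism of each component with $Q_{n-1}\pow{2p}\uplus Q_{n-1}\pow{2p-1}$ genuinely matches both the intra-layer edges (to $Q_{n-1}\pow{2p}$) and the inter-layer edges (to $Q_{n-1}\pow{2p-1}$) at once — this works only because the bipartiteness of $Q_{n-1}$ forces these two edge types onto complementary pairs of classes, so no edge is lost or double-counted when the two layers are folded together.
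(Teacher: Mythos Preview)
The paper does not supply a proof of Proposition~\ref{prop:q2}; it is quoted as a known result of Ziegler and stated without argument. Your proposal therefore cannot be compared line-by-line against a proof in the paper, but it is a correct and complete self-contained argument. The decomposition $Q_n=Q_{n-1}\cp K_2$ together with Lemma~\ref{lem:dist}(i) gives exactly the two cases you describe, and the use of the bipartition $A\cup B$ of $Q_{n-1}$ to split $Q_n\pow{2p}$ into two pieces (parity of distance forces $Q_{n-1}\pow{2p}$-edges to stay within a class and $Q_{n-1}\pow{2p-1}$-edges to cross) is precisely what is needed; the folding bijection $(u,0)\mapsto u$ on $A$, $(v,1)\mapsto v$ on $B$ then identifies each piece with $Q_{n-1}\pow{2p}\uplus Q_{n-1}\pow{2p-1}$ on the common vertex set $V(Q_{n-1})$, as you note. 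Nothing is missing.
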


For odd distance $n-1$ we prove the existence of the following isomorphism.

\begin{proposition}
For every positive even integer $n$, $Q_{n}\pow{n-1} \cong Q_{n}$.
\end{proposition}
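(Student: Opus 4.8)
The plan is to exhibit an explicit isomorphism $\varphi\colon V(Q_n)\to V(Q_n)$ and verify that it maps edges of $Q_n$ to pairs of vertices at distance exactly $n-1$ in $Q_n$, and conversely. Identify $V(Q_n)$ with $\{0,1\}^n$, where adjacency in $Q_n$ means Hamming distance $1$, and recall that in $Q_n\pow{n-1}$ two vertices are adjacent iff their Hamming distance is $n-1$. The natural candidate is the ``partial complementation'' map: fix the last coordinate and complement the first $n-1$ coordinates, i.e. $\varphi(x_1,\dots,x_{n-1},x_n)=(\overline{x_1},\dots,\overline{x_{n-1}},x_n)$. Since $\varphi$ is an involutive bijection, it suffices to check that $d_H(x,y)=1$ if and only if $d_H(\varphi(x),\varphi(y))=n-1$. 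Writing $x\oplus y$ for the coordinatewise XOR, we have $\varphi(x)\oplus\varphi(y)$ equal to $x\oplus y$ in the last coordinate and to $\overline{(x\oplus y)_j}$ in each of the first $n-1$ coordinates; hence $d_H(\varphi(x),\varphi(y)) = (n-1) - (\text{number of }1\text{'s among the first }n-1\text{ coords of }x\oplus y) + (x_n\oplus y_n)$. One then checks the two cases: if $x,y$ differ only in coordinate $n$, the right-hand side is $(n-1)-0+1 = n$, which is wrong — so this particular $\varphi$ does not work, and I would instead use the \emph{full} complementation composed with a shift, or rather the map $\varphi(x) = \overline{x}$ followed by flipping exactly one fixed coordinate. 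Cleaner: take $\varphi(x)=\overline{x}\oplus e_1$ where $e_1=(1,0,\dots,0)$; then $\varphi(x)\oplus\varphi(y)=x\oplus y$, which changes nothing — also wrong. The correct map is simply $\varphi(x)=\overline{x}$ composed with the observation that we want $d_H(x,y)=1 \iff d_H(\overline x, y)=n-1$, i.e. we should map one ``side'' by complementation; but complementation is not an automorphism trick here. So the right statement is: $d_H(x,y)=1 \iff d_H(x,\overline y)=n-1$, which is immediate since $d_H(x,\overline y)=n-d_H(x,y)$. This suggests defining $\varphi$ on the bipartition classes of $Q_n$ (recall $n$ is even, so $n-1$ is odd and by Lemma~\ref{lem:zieg}(ii) the bipartition is preserved): leave the even-weight class fixed and complement the odd-weight class. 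One must check $\varphi$ is well-defined as a bijection (complementation swaps weight parities when $n$ is odd but \emph{preserves} them when $n$ is even — good, $n$ is even) and that it carries edges to distance-$(n-1)$ pairs both ways; an edge joins an even-weight vertex $x$ to an odd-weight vertex $y$, and $d_H(x,\varphi(y))=d_H(x,\overline y)=n-d_H(x,y)=n-1$.

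The main steps in order: (1) set up coordinates and recall that since $n$ is even, bitwise complementation preserves weight parity, so the map $\varphi$ that fixes even-weight vertices and complements odd-weight vertices is a well-defined bijection of $V(Q_n)$; (2) show $\varphi$ sends each edge $xy$ of $Q_n$ (with $x$ even, $y$ odd) to a pair at Hamming distance exactly $n-1$, using $d_H(x,\overline y)=n-d_H(x,y)$; (3) show $\varphi$ is surjective onto the edge set of $Q_n\pow{n-1}$ — equivalently, every distance-$(n-1)$ pair arises this way: if $d_H(u,v)=n-1$ then $n-1$ odd forces $u,v$ to have the same weight parity (consistent with Lemma~\ref{lem:zieg}(ii)); if both are even, then $\varphi^{-1}(u)=u$, $\varphi^{-1}(v)=\overline v$ has odd weight, and $d_H(u,\overline v)=n-(n-1)=1$ so $u\overline v\in E(Q_n)$; the case both odd is symmetric; (4) conclude $\varphi$ is a graph isomorphism $Q_n\to Q_n\pow{n-1}$.

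I expect the only genuine subtlety is bookkeeping the parity: one must use $n$ even precisely to guarantee that complementation preserves the two sides of the bipartition, so that $\varphi$ is a bijection rather than merely a bijection between the two classes with their roles swapped (which would still give an isomorphism but requires the extra observation that complementation is itself a graph automorphism of $Q_n$, composable harmlessly). For $n$ odd the statement is in fact false in general, which is why the hypothesis appears, and the proof should make transparently clear where evenness of $n$ is used. Everything else — the identities $d_H(x,\overline y)=n-d_H(x,y)$ and the checking of the two directions — is a one-line computation, so no lengthy calculation is needed.
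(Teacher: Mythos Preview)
Your final map --- fix even-weight vertices, complement odd-weight ones --- is correct and considerably simpler than the paper's construction. The paper pairs the $n$ coordinates into $n/2$ two-bit words, classifies each vertex as ``type A'' or ``type B'' according to the parity of the number of words lying in $\{01,10\}$, and then selectively complements words depending on both the type of the vertex and the parity of the word; it must then verify that adjacent vertices in $Q_n\pow{n-1}$ lie in different types and trace through the cases. Your argument bypasses all of this bookkeeping: the single identity $d_H(x,\overline y)=n-d_H(x,y)$ does all the work, and the evenness of $n$ enters exactly once, to guarantee that complementation preserves weight parity so that $\varphi$ restricts to a bijection on each bipartition class.

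One local slip to fix in step~(3): you write that $d_H(u,v)=n-1$ with $n-1$ odd forces $u$ and $v$ to have the \emph{same} weight parity. It is the opposite --- an odd Hamming distance means different parities, and this is exactly what Lemma~\ref{lem:zieg}(ii) says (the bipartition is preserved, so edges of $Q_n\pow{n-1}$ run between the classes, not within them). Your ensuing computation ``$\varphi^{-1}(v)=\overline v$'' is only valid when $v$ has odd weight, so what you actually want is: $u$ even, $v$ odd, hence $\varphi^{-1}(u)=u$, $\varphi^{-1}(v)=\overline v$, and $d_H(u,\overline v)=n-(n-1)=1$. Alternatively, skip the converse direction entirely and finish as the paper does: both $Q_n$ and $Q_n\pow{n-1}$ are $n$-regular on $2^n$ vertices, so the edge-map from step~(2), being injective, is automatically a bijection.
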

\begin{proof}
For a vertex $x$ of $Q_n\pow{n-1}$, we denote by $x_{i,i+1}$, for $i\in\{1,\ldots,n-1\}$, the concatenation of the $i$th bit and $(i+1)$th bit of $x$. We say that $x_{i,i+1}$ is an odd word if $x_{i,i+1}\in \{01,10\}$, and otherwise $x_{i,i+1}$ is an even word (i.e., when $x_{i,i+1}\in \{00,11\}$). Next, if 
$\{x_{1,2},x_{3,4},\ldots, x_{n-1,n}\}$ contains an even number of odd words, then $x$ is said to be of type A. Otherwise, $x$ is said to be of type B.
We set the following function $f$, for $i\in\{0,\ldots,(n-2)/2\}$ from $\{0,1\}^n$ to $\{0,1\}^n$:
$$f(x)_{2i+1,2i+2}= \left\{
    \begin{array}{ll}
        x_{2i+1,2i+2}, & \text{ if } x_{2i+1,2i+2} \text{ is even and $x$ is of type A}; \\[2mm]
        \overline{x_{2i+1,2i+2}},& \text{ if } x_{2i+1,2i+2} \text{ is odd and $x$ is of type A};\\[2mm]
         \overline{x_{2i+1,2i+2}},& \text{ if } x_{2i+1,2i+2} \text{ is even and $x$ is of type B};\\[2mm]
         x_{2i+1,2i+2}, & \text{ otherwise}.\\
    \end{array}
\right.
$$

We first prove that $f$ is a bijective and, afterwards, that $f$ is an isomorphism between $Q_{n}\pow{n-1}$ and $Q_{n}$.
First, since $f(x)$ is of type A if and only if $x$ is of type A, it can be easily noticed that $x\neq x'$, for $x,x'\in V(Q_{n}\pow{n-1})$, implies $f(x)\neq f(x')$. Also, for every vertex $y$ of $Q_n$, there exists $x\in V(Q_{n}\pow{n-1})$ such that $f(x)=y$. Thus, $f$ is bijective.

Second, since $n-1$ is odd, every adjacent vertices $x$ and $x'$ in $Q_{n}\pow{n-1}$ are in different classes ( $Q_{n}\pow{n-1}$ is bipartite). Suppose that $x$ and $x'$ differ in $n-1$ bits, i.e., have exactly one common bit $x_{k}$. Suppose that $x_{2i+1, 2i+2}$ contains the bit $x_k$. It can be easily observed that for each $j\in \{0,\ldots,(n-2)/2\}\setminus \{i\}$, $x_{2j+1,2j+2}$ is an even word if and only if $x'_{2j+1,2j+2}$ is an even word. Moreover, $x_{2i+1, 2i+2}$ is an even word if and only if $x'_{2i+1, 2i+2}$ is an odd word. Consequently, for each $j\in \{0,\ldots,(n-2)/2\}\setminus\{i\}$, $f(x)_{2j+1,2j+2}=f(x')_{2j+1,2j+2}$ and since $f(x)_{2i+1,2i+2}$ and $f(x')_{2i+1,2i+2}$ have exactly one common bit, $f(x)$ and $f(x')$ have $n-1$ common bits and, consequently, are adjacent in $Q_{n}$. Finally, if $x$ and $x'$ are not adjacent in $Q_{n}\pow{n-1}$, then $f(x)$ and $f(x')$ are not adjacent in $Q_n$, since both $Q_{n}\pow{n-1}$ and $Q_{n}$ are $n$-regular. Thus, $f$ is an isomorphism.
\end{proof}

The following isomorphism is well known, cf.~\cite{AGON}.

\begin{proposition}
\label{isojohnson}
If $n$, $k$, and $i$ are positive integers, then $J(n,k,i)\cong J(n,n-k,n-2k+i)$.
\end{proposition}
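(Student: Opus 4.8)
The statement to prove is the classical isomorphism $J(n,k,i)\cong J(n,n-k,n-2k+i)$, given via complementation of subsets.

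The plan is to exhibit the complementation map as the desired isomorphism and verify it preserves the adjacency relation. First I would define $\varphi\colon V(J(n,k,i))\to V(J(n,n-k,n-2k+i))$ by $\varphi(A) = \{1,\ldots,n\}\setminus A$. Since $|A| = k$ implies $|\varphi(A)| = n-k$, this is a well-defined map into the vertex set of $J(n,n-k,n-2k+i)$, and it is clearly a bijection (its own inverse up to the obvious identification, as $\varphi$ is an involution on subsets of $\{1,\ldots,n\}$ and restricts to a bijection between the $k$-subsets and the $(n-k)$-subsets).

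Next I would check that $\varphi$ preserves adjacency in both directions. The key computation is inclusion–exclusion: for any $A,B\subseteq\{1,\ldots,n\}$,
$$|\overline{A}\cap\overline{B}| = n - |A\cup B| = n - |A| - |B| + |A\cap B|.$$
Taking $|A| = |B| = k$ this gives $|\overline{A}\cap\overline{B}| = n - 2k + |A\cap B|$. Hence $|A\cap B| = i$ holds if and only if $|\overline{A}\cap\overline{B}| = n-2k+i$, which is precisely the adjacency condition in $J(n,n-k,n-2k+i)$. Therefore $AB\in E(J(n,k,i))$ if and only if $\varphi(A)\varphi(B)\in E(J(n,n-k,n-2k+i))$, so $\varphi$ is an isomorphism.

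There is essentially no obstacle here; the only points worth a sentence of care are that the parameters on the right-hand side make sense (one should note $n-2k+i\le n-k$, which follows from $i\le k$, and $n-2k+i\ge 0$, which is exactly the condition for $J(n,k,i)$ to possibly have an edge — if it is negative both graphs are edgeless and the statement is vacuous), and that $\varphi$ really does land in and surject onto the correct vertex set. Everything else is the one-line inclusion–exclusion identity above.
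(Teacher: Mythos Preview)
Your argument is correct and is precisely the standard complementation proof of this classical fact. The paper does not actually supply a proof of this proposition; it simply records the isomorphism as well known and cites \cite{AGON}, so your write-up in fact fills in what the paper omits.
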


The set of vertices of $Q_n$ having exactly $j$ bits $1$ will be denoted by $L_j^n$, or shortly $L_j$ when the hypercube $Q_n$ is understood from context.

\begin{proposition}\label{hypeven}
For every integer $n$ and even integer $p$, $p\le n$, the exact distance graph $Q_{n}\pow{p} $ contains $J(n,i,i-p/2)$ as an induced subgraph, for each $i\in\{p/2,\ldots,n -p/2 \}$. Moreover, all these induced subgraphs are pairwise vertex disjoint in $Q_{n}\pow{p}$.
\end{proposition}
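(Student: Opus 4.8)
The key observation is that for vertices $x, y \in V(Q_n)$ with $x \in L_i$ and $y \in L_j$, the Hamming distance $d_{Q_n}(x,y)$ equals $i + j - 2|S_x \cap S_y|$, where $S_x, S_y \subseteq \{1,\ldots,n\}$ denote the sets of positions carrying a bit $1$ in $x$ and $y$, respectively. The plan is to restrict attention to a single level $L_i$ and compute distances within it. If $x, y \in L_i$, then $|S_x| = |S_y| = i$ and $d_{Q_n}(x,y) = 2i - 2|S_x \cap S_y| = 2(i - |S_x \cap S_y|)$. Hence $d_{Q_n}(x,y) = p$ (with $p$ even) holds if and only if $|S_x \cap S_y| = i - p/2$. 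Identifying $L_i$ with the family of $i$-subsets of $\{1,\ldots,n\}$ via $x \mapsto S_x$, this is precisely the adjacency relation of $J(n, i, i - p/2)$; so the subgraph of $Q_n^{[\natural p]}$ induced on $L_i$ is isomorphic to $J(n, i, i - p/2)$.

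Next I would record the range of admissible $i$. For $J(n, i, i-p/2)$ to be a legitimate generalized Johnson graph we need the parameter $i - p/2$ to satisfy $0 \le i - p/2 \le i \le n$; the upper constraint $i \le n$ combined with requiring the complementary description to make sense (or simply that a pair of $i$-sets with intersection $i - p/2$ actually exists in an $n$-set, which needs $2i - (i-p/2) \le n$, i.e. $i \le n - p/2$) gives exactly $i \in \{p/2, \ldots, n - p/2\}$, matching the statement. Here $i - p/2 \ge 0$ forces $i \ge p/2$, and the existence of two such sets inside $\{1,\ldots,n\}$ forces $i \le n - p/2$; outside this range $L_i$ is either empty of the required intersection pattern or the parameter is negative, so no constraint is lost.

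Finally, for the disjointness claim: the vertex sets $L_{p/2}, L_{p/2+1}, \ldots, L_{n-p/2}$ are pairwise disjoint subsets of $V(Q_n)$, since a vertex of $Q_n$ belongs to exactly one level $L_j$ (its number of $1$s is determined). Therefore the induced copies of $J(n,i,i-p/2)$ sitting on distinct levels share no vertices, which is the asserted pairwise vertex-disjointness. I do not expect any real obstacle here; the only point requiring a little care is pinning down the exact index range and checking that the intersection-size condition translates faithfully into the edge set of the generalized Johnson graph (in particular that no edges of $Q_n^{[\natural p]}$ \emph{within} $L_i$ are missed and none are spurious), which follows directly from the distance formula $d_{Q_n}(x,y) = 2(i - |S_x \cap S_y|)$ displayed above.
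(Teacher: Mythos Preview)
Your argument is correct and follows essentially the same route as the paper: both reduce the claim to the observation that two vertices in the same level $L_i$ are at Hamming distance $p$ precisely when their supporting $i$-sets intersect in $i-p/2$ elements, which is exactly the adjacency relation of $J(n,i,i-p/2)$. Your write-up is in fact more careful than the paper's, since you explicitly justify the admissible range of $i$ and the pairwise disjointness of the levels, points the paper leaves implicit.
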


\begin{proof}
By changing $p/2$ bits $1$ and $p/2$ bits $0$ from a vertex of $L_{i}$, we obtain another vertex from $L_{i}$ with $i-p/2$ common bits $1$.
If we change $k$ bits $1$, with $k>p/2$ or $k<p/2$ from a vertex of $L_{i}$, then we obtain a vertex of $V(G)\setminus L_i$.
Thus, the vertices of $L_{i}^{n}$ induce the graph $J(n,i,i-p/2)$.
\end{proof}

\begin{remark}\label{hyprem}
For every integer $n$ and even integer $p$, where $p\le n$, the subgraph induced by $ \cup_{0\le j\le \lfloor n/2\rfloor} L_{2j}^{n}$ and the subgraph induced by $\cup_{0\le j\le \lfloor (n-1)/2\rfloor} L_{2j+1}^{n}$ are the two isomorphic connected components of $Q_{n}\pow{p}$. 
\end{remark}

This remark follows from two facts.
First, when $p$ is even there is no edges between a vertex containing an even number of bits $1$ and a vertex containing an odd number of bits $1$ (by parity).
Second, by inverting the bits $0$ and $1$, we have a trivial isomorphism between $ \cup_{0\le j\le \lfloor n/2\rfloor} L_{2j}^{n}$ and $\cup_{0\le j\le \lfloor (n-1)/2\rfloor} L_{2j+1}^{n}$.

\subsection{Colorings of the generalized Johnson graphs}

The determination of the chromatic number of Kneser graphs is a classical result of graph theory~\cite{BARANY,LOVASZ, mat-04}.
\begin{theorem}{\rm (\cite{BARANY,LOVASZ})}
\label{thm:lovasz}
For any integers $n$ and $k<n/2$, $\chi(J(n,k,0))=n-2k+2$.
\end{theorem}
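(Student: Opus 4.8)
The plan is to prove the two inequalities $\chi(J(n,k,0))\le n-2k+2$ and $\chi(J(n,k,0))\ge n-2k+2$ separately. The upper bound is an elementary explicit colouring, whereas the lower bound genuinely requires a topological input, namely the Borsuk--Ulam theorem in its Lyusternik--Shnirel'man covering form. (Throughout one uses $k<n/2$ only to guarantee that the relevant quantities are positive and the graph has edges.)

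For the upper bound I would exhibit a proper colouring of $J(n,k,0)$ with $n-2k+2$ colours. Colour a $k$-set $A\subseteq\{1,\ldots,n\}$ by $\min A$ whenever $\min A\le n-2k+1$, and assign all remaining $k$-sets --- exactly those contained in $\{n-2k+2,\ldots,n\}$ --- one additional colour. Two $k$-sets receiving the same colour $j\le n-2k+1$ both contain $j$, hence meet; and the last colour class is independent because any two $k$-subsets of the $(2k-1)$-element set $\{n-2k+2,\ldots,n\}$ must intersect, since $2k>2k-1$. This uses $(n-2k+1)+1=n-2k+2$ colours.

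For the lower bound I would run the sphere-covering argument. Put $d=n-2k+1$. First invoke Gale's lemma: there exist $n=2k+d$ points $x_1,\ldots,x_n$ on $S^{d}$, in general position, such that every open hemisphere of $S^{d}$ contains at least $k$ of them. Suppose for contradiction that $J(n,k,0)$ admits a proper colouring with $d$ colours. For $i=1,\ldots,d$ let $A_i$ be the set of $v\in S^{d}$ for which the open hemisphere centred at $v$ contains some colour-$i$ $k$-subset of $\{x_1,\ldots,x_n\}$; each $A_i$ is open. Set $A_{d+1}=S^{d}\setminus(A_1\cup\cdots\cup A_d)$, which is closed, so $A_1,\ldots,A_{d+1}$ cover $S^{d}$. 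By the Lyusternik--Shnirel'man--Borsuk theorem some $A_j$ contains an antipodal pair $v,-v$. If $j\le d$, the two disjoint open hemispheres at $v$ and $-v$ each contain a colour-$j$ $k$-set, and these two $k$-sets are disjoint, contradicting properness. If $j=d+1$, then neither open hemisphere at $v$ contains any $k$-subset, so each contains at most $k-1$ of the points, forcing at least $n-2(k-1)=d+1$ of them onto the great subsphere $\{x:\langle x,v\rangle=0\}\cong S^{d-1}$, i.e.\ into a $d$-dimensional subspace --- contradicting general position. Hence $\chi(J(n,k,0))\ge d+1=n-2k+2$.

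The main obstacle is the lower bound: ruling out a $k$-set colouring with few colours has no short purely combinatorial proof, and one must borrow either the Borsuk--Ulam theorem or its combinatorial shadow (Tucker's lemma). The two facts I would take off the shelf are Gale's lemma (existence of the general-position configuration on $S^{d}$ with the hemisphere property) and the covering version of Borsuk--Ulam; granting these, the remaining work --- verifying that the $A_i$ are open and that antipodal points translate into disjoint monochromatic $k$-sets or into too many coplanar points --- is routine bookkeeping.
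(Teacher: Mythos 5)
The paper does not prove this statement at all: it is the Lov\'asz--Kneser theorem, quoted from \cite{BARANY,LOVASZ} (note that $J(n,k,0)$ is exactly the Kneser graph $K(n,k)$), so there is no in-paper argument to compare against. Your proof is essentially correct and follows B\'ar\'any's topological approach in Greene's simplified form. The upper bound via $\min A$ and the final colour class on the $(2k-1)$-element tail is the standard greedy colouring and is fine. For the lower bound, two remarks. First, your invocation of Gale's lemma is arithmetically off: with your $d=n-2k+1$ you have $2k+d=n+1$, not $n$, so the configuration you describe is not the Gale configuration for these parameters. Second, and more to the point, you never actually use the hemisphere property of Gale's lemma: your case $j=d+1$ handles precisely the situation where an open hemisphere contains fewer than $k$ points, and disposes of it by general position. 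So all you need is that there exist $n$ points on $S^{d}$ with no $d+1$ of them on a common linear hyperplane, which is trivially true for generic points --- this is exactly Greene's 2002 streamlining of B\'ar\'any's proof, and it makes the argument self-contained modulo the Lyusternik--Shnirel'man--Borsuk covering theorem (in the version allowing $d+1$ sets each of which is open or closed, which you correctly need since $A_{d+1}$ is closed while the others are open). I would simply delete the reference to Gale's lemma and replace it with ``choose $n$ points of $S^{d}$ in general position''; with that correction the proof is complete and correct.
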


On the other hand, it is not known much about the chromatic number of generalized Johnson graphs and related graph classes. We state a few known bounds and values in this area.

\begin{theorem}{\rm (\cite{BOBU})}
\label{thm:bobu}
We have $\chi(J(6,3,1))=6$ and $\chi(J(8,4,1))=5$.
\end{theorem}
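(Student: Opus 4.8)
The final statement to prove is Theorem~\ref{thm:bobu}, asserting that $\chi(J(6,3,1))=6$ and $\chi(J(8,4,1))=5$. Since this is attributed to reference \cite{BOBU} rather than proved in the paper, I will describe how one would verify these two values.

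\medskip

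The plan is to handle each value separately by establishing matching lower and upper bounds. For $\chi(J(6,3,1))=6$: the graph $J(6,3,1)$ has the $\binom{6}{3}=20$ three-element subsets of $\{1,\ldots,6\}$ as vertices, with two adjacent when their intersection has size exactly $1$. I would first observe via Proposition~\ref{isojohnson} that $J(6,3,1)\cong J(6,3,1)$ (the self-complementary-type symmetry $n-2k+i = 6-6+1=1$), which already signals a highly symmetric structure. For the \emph{upper bound} $\chi\le 6$, I would exhibit an explicit partition of the $20$ triples into $6$ independent sets; a natural candidate is to group triples by a cleverly chosen invariant — for instance, pairing each triple $A$ with its complement $\bar A$ (note $|A\cap\bar A|=0$, so complements are non-adjacent), giving $10$ complementary pairs, and then showing these $10$ pairs can be $6$-colored so that no two pairs receiving the same color contain adjacent triples. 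For the \emph{lower bound} $\chi\ge 6$, the cleanest route is to find a clique of size $6$, or failing that, to use a fractional/Lovász-type argument: since the independence number $\alpha(J(6,3,1))$ can be computed (an independent set is a family of triples pairwise meeting in $0$ or $2$ elements), we get $\chi\ge 20/\alpha$; if $\alpha=3$ this already forces $\chi\ge 7$, so more likely $\alpha=4$ giving $\chi\ge 5$, and then one extra combinatorial argument rules out $5$ colors. I expect the main obstacle here to be the lower bound: showing $5$ colors do not suffice requires either a careful case analysis or an appeal to the structure of maximum independent sets and a counting/parity contradiction.

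\medskip

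For $\chi(J(8,4,1))=5$: the graph has the $\binom{8}{4}=70$ four-subsets of $\{1,\ldots,8\}$, adjacent when intersecting in exactly $1$ element. By Proposition~\ref{isojohnson}, $J(8,4,1)\cong J(8,4,n-2k+i)=J(8,4,1)$ again self-paired. For the \emph{upper bound} $\chi\le 5$, I would construct an explicit $5$-coloring; one promising approach is to use a coloring derived from a linear functional over $\mathbb{Z}_5$ or from the action of a transitive group (e.g., assign to a $4$-subset $A$ the value $\sum_{a\in A} a \bmod 5$ or a similar algebraic rule, then check that no monochromatic class contains a pair with intersection exactly $1$ — if the naive rule fails, perturb it on the classes where it fails). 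For the \emph{lower bound} $\chi\ge 5$, a clique of size $5$ would suffice if one exists; a natural candidate is a "sunflower-free" family of $4$-sets pairwise meeting in exactly one point, such as $4$-subsets built from blocks of a resolvable design on $8$ points — if five such sets exist pairwise intersecting in one element, we are done, otherwise one falls back on $\chi\ge 70/\alpha$ after computing $\alpha(J(8,4,1))$ and supplementing with a short argument to close the gap.

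\medskip

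The hardest part overall will be the \emph{lower bounds}, since they typically require knowing the exact independence numbers $\alpha(J(6,3,1))$ and $\alpha(J(8,4,1))$ (which themselves are nontrivial extremal set-theory computations about families of $k$-sets avoiding intersection size $i$) and then, when the fractional bound $n\choose k$$/\alpha$ falls just short, supplying an ad hoc argument — often a parity or double-counting obstruction, or an exhaustive but structured case check — to eliminate the borderline number of colors. The upper bounds, by contrast, reduce to guessing the right explicit coloring (algebraic or orbit-based) and then verifying a finite list of intersection conditions, which is routine once the right pattern is found. Given that this theorem is quoted from \cite{BOBU}, in the paper itself I would simply cite it; the above is the verification strategy one would follow to reconstruct the proof.
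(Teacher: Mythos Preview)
The paper provides no proof of Theorem~\ref{thm:bobu}; it is stated with attribution to \cite{BOBU} and used as a black box. You correctly recognize this in your final sentence, so there is nothing to compare your sketch against in the paper itself.

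That said, since you offered a verification strategy, a brief remark: your outline is reasonable in spirit but remains a plan rather than a proof. For the upper bounds you never actually produce a coloring (``if the naive rule fails, perturb it'' is not a construction), and for the lower bounds you speculate about $\alpha$ without computing it and leave the decisive step (``rule out $5$ colors'', ``close the gap'') unspecified. In particular, for $J(6,3,1)$ the clique number is $4$, not $6$, so the clique route you mention first will not give $\chi\ge 6$ directly; the actual lower bound in \cite{BOBU} requires a more delicate argument. None of this matters for the present paper, which only needs the statement.
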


\begin{theorem} {\rm (\cite{BOBU})}
For any positive integers $n$ and $i<n/2$, $i+2 \le \chi(J(n,n/2,i))\le 2 \binom{2i+2}{i+1} $.
\end{theorem}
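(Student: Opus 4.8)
The plan is to prove the two inequalities by separate arguments, the lower bound being the harder one.

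\emph{Lower bound: $\chi(J(n,n/2,i))\ge i+2$.} The natural first move is to look for a clique of size $i+2$. Writing $m=2i+2$, the $i+2$ sets $\{1,\dots,i+2\}\setminus\{j\}$, $j=1,\dots,i+2$, are $(i+1)$-subsets of $[2i+2]$ meeting pairwise in exactly $i$ points, so $\omega(J(2i+2,i+1,i))\ge i+2$ and the bound holds when $n=2i+2$. For larger $n$ a clique of this size need not exist (already $J(8,4,1)$ is triangle-free), so I would pass to a topological argument patterned on Lovász's proof that $\chi(K(m,k))=m-2k+2$: build a $\mathbb{Z}_2$-equivariant map from the $i$-sphere, with the antipodal action, into the box complex of $J(n,n/2,i)$, and conclude $\chi\ge i+2$ from the Lovász--Borsuk--Ulam inequality. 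In the map, a point of $S^i$ would encode a balanced signed bipartition of a fixed $(2i+2)$-subset $T\subseteq[n]$ together with a pivot coordinate; the two sides, completed by the elements outside $T$ split in a fixed way, give a pair of $(n/2)$-sets adjacent in $J(n,n/2,i)$ by construction, which supplies the required simplex. A purely combinatorial alternative is to bound the independence number from above---$\alpha(J(n,n/2,i))\le \binom{n}{n/2}/(i+2)$ would already yield $\chi\ge\chi_f\ge i+2$ for the fractional chromatic number $\chi_f$---using Frankl--Wilson/Deza--Frankl estimates for families of $(n/2)$-sets omitting the intersection value $i$. I expect the topological version to be cleaner, and verifying the requisite $\mathbb{Z}_2$-coindex (equivalently, the connectivity of the box complex) is the main obstacle of the whole proof.

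\emph{Upper bound: $\chi(J(n,n/2,i))\le 2\binom{2i+2}{i+1}$.} Here I would construct an explicit proper colouring by sliding a window of length $2i+2$. For $A\in\binom{[n]}{n/2}$ and $t\in\{1,\dots,n\}$ put $W_t=\{t,t+1,\dots,t+2i+1\}$ with indices read modulo $n$; consecutive windows differ by one element, so $|A\cap W_t|$ changes by at most $1$ as $t$ increases, whereas $\sum_{t=1}^{n}|A\cap W_t|=(2i+2)|A|=(2i+2)(n/2)$, i.e.\ the average of $|A\cap W_t|$ over $t$ equals $i+1$. A discrete intermediate-value argument then produces a window with $|A\cap W_t|=i+1$; let $W(A)$ be the first such one in cyclic order. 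Colour $A$ by the pair $\big(\pi(A),\beta(A)\big)$, where $\pi(A)$ records which of the $2i+2$ positions of $W(A)$ lie in $A$ (an $(i+1)$-subset of $\{0,\dots,2i+1\}$, hence $\binom{2i+2}{i+1}$ choices) and $\beta(A)\in\{0,1\}$ is one extra bit. Since the setup is cyclic, no divisibility hypothesis on $n$ is needed.

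To check properness, suppose $A$ and $B$ are adjacent, so $|A\cap B|=i$. If they received the same pattern positioned at the same place, then $A$ and $B$ agree on the $i+1$ elements of that common trace, forcing $|A\cap B|\ge i+1$, a contradiction; hence every monochromatic conflict comes from an adjacent pair whose first balanced windows sit at \emph{different} positions, and the bit $\beta$ is present precisely to separate these. Taking $\beta(A)$ to be a suitable parity of the starting position of $W(A)$, and exploiting that two $(n/2)$-sets meeting in only $i$ points cannot have their unique first balanced windows too close, should yield the clean factor $2$; if that bookkeeping turns out too tight, a safe fallback is to let $\beta$ be the residue of the window position modulo a small fixed integer, which still bounds $\chi$ by $O\!\big(\binom{2i+2}{i+1}\big)$. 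The delicate point of this half is exactly the verification that a single bit $\beta$ suffices.
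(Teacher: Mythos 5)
This statement is quoted from Bobu--Kupriyanov \cite{BOBU}; the paper gives no proof of it, so your proposal can only be judged on its own terms, and as it stands both halves have genuine gaps. For the lower bound you correctly observe that a clique of size $i+2$ exists only in the extremal case $n=2i+2$ (indeed $J(8,4,1)$ is triangle-free while $\chi(J(8,4,1))=5$), but the replacement you offer is a plan, not a proof: the $\mathbb{Z}_2$-equivariant map from $S^i$ into the box complex is never constructed, and you yourself flag the computation of the coindex/connectivity as the unresolved main obstacle. The fractional alternative is in the same state --- the inequality $\alpha(J(n,n/2,i))\le\binom{n}{n/2}/(i+2)$ is asserted as something Frankl--Wilson/Deza--Frankl ``would'' give, but no such bound is derived, and forbidding a single intersection value is exactly the regime where these estimates require care. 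So the inequality $\chi\ge i+2$ is not established for any $n>2i+2$.

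For the upper bound, the sliding-window setup is sound: the averaging plus discrete intermediate-value argument does produce a window $W_t$ with $|A\cap W_t|=i+1$, and two adjacent sets cannot share both the position and the trace of a balanced window, since that would force $|A\cap B|\ge i+1$. The failure point is precisely the one you flag: nothing prevents two adjacent sets from having their \emph{first} balanced windows at distinct positions of equal parity with identical traces, and the heuristic that such windows ``cannot be too close'' has no justification (closeness is not even the issue --- positions $t$ and $t+2$, or $t$ and $t+100$, are equally problematic). So the single bit $\beta$ is not shown to separate any conflicting pair, and the fallback of taking the window position modulo a fixed integer only yields a constant strictly larger than $2$ unless that integer is $2$, which is the unproved case. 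Consequently neither the constant $2$ nor, in fact, any finite bound is actually established by the argument as written; to salvage it you would need either a proof that adjacent sets with equal traces have first balanced windows of opposite parity, or a different device for the second coordinate (for instance, basing the colour on a single fixed $(2i+2)$-window and handling the sets whose trace on it is unbalanced separately).
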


This latter result was recently extended and improved by Balogh, Cherkashin and Kiselev~\cite{BALOGH} who presented a upper bound which is quadratic on $i$ even for the generalized Kneser graph.

We define the {\em generalized Kneser graph} $K(n,k,i)$, where $i\le k\le n$, as the graph with vertex set $\{A\subseteq \{1,\ldots,n\}:\ |A|=k\}$ and edge set $\{AB :\ |A\cap B|\le i\}$. 
For homogeneity reasons, the generalized Kneser graphs are defined slightly differently than in \cite{BALOGH,JARAFI} (there is a shift for the third parameter).
Note that the generalized Johnson graph $J(n,k,i)$ is a subgraph of  $K(n,k,i)$. Consequently, $\chi(J(n, k,i))\le \chi(K(n, k,i))$.
The following are known results about the chromatic number of generalized Kneser graphs.

\begin{theorem}{\rm (\cite{JARAFI})}
For every positive integers $n$, $k$ and $i$, $\chi(K(n, k,i))\le \binom{ n-2k+2(i+1) }{i+1}$.
\end{theorem}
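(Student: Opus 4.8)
The plan is to read $\chi(K(n,k,i))$ as the least number of parts needed to partition the $k$-subsets of $\{1,\dots,n\}$ into families that are independent in $K(n,k,i)$, i.e.\ families in which any two members meet in at least $i+1$ points, and then to exhibit an explicit such partition with exactly $\binom{n-2k+2(i+1)}{i+1}$ parts. Write $t=i+1$ and $m=n-2k+2t$. One may assume $t\le k$ and $m\ge t$: in the remaining degenerate cases $K(n,k,i)$ is either complete (this happens precisely when $i=k$, and then the bound reads $\binom{n+2}{k+1}\ge\binom{n}{k}$, which follows from two applications of Pascal's identity) or edgeless (this happens when $m<t$, i.e.\ $2k-n>i$), and the inequality is immediate. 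Under the standing assumptions the colours will be the $t$-element subsets of $\{1,\dots,m\}$, of which there are $\binom{m}{t}=\binom{n-2k+2(i+1)}{i+1}$.

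For a $k$-subset $A=\{a_1<a_2<\dots<a_k\}$ of $\{1,\dots,n\}$ set $c_j=m-t+j$ for $1\le j\le t$ and colour $A$ by
$$\phi(A)=\bigl\{\,\min(a_j,c_j)\ :\ 1\le j\le t\,\bigr\}.$$
The first (routine) step is to check that $\phi(A)$ really is a $t$-subset of $\{1,\dots,m\}$: the numbers $\min(a_j,c_j)$ are strictly increasing in $j$, since both $a_j$ and $c_j$ are, and they lie between $1$ and $c_t=m$.

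The heart of the proof is the claim that $\phi(A)=\phi(B)$ implies $|A\cap B|\ge t$. I would argue as follows. Write $B=\{a'_1<\dots<a'_k\}$ and $\phi(A)=\phi(B)=\{b_1<\dots<b_t\}$, so $b_j=\min(a_j,c_j)=\min(a'_j,c_j)$ for every $j$. Call an index $j$ \emph{capped} if $b_j=c_j$, and let $J$ be the set of capped indices. If $j\notin J$ then $b_j<c_j$, which forces $a_j=b_j=a'_j$; hence the indices outside $J$ already contribute $t-|J|$ common elements, which settles the case $J=\varnothing$. If $J\ne\varnothing$, let $j_0=\min J$. The $j_0-1$ uncapped indices below $j_0$ yield common elements $a_1=a'_1,\dots,a_{j_0-1}=a'_{j_0-1}$, all strictly below $c_{j_0}=b_{j_0}$. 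Since $j_0$ is capped for both $A$ and $B$, we get $a_{j_0}\ge c_{j_0}$ and $a'_{j_0}\ge c_{j_0}$, so the tails $\{a_{j_0},\dots,a_k\}$ and $\{a'_{j_0},\dots,a'_k\}$ are two $(k-j_0+1)$-subsets of the interval $I=\{c_{j_0},\dots,n\}$, which has exactly $2k-t-j_0+1$ elements by the choice $m=n-2k+2t$. Inclusion–exclusion then gives at least $2(k-j_0+1)-(2k-t-j_0+1)=t-j_0+1$ common elements among the tails, and these lie in $I$, hence are distinct from the $j_0-1$ common elements found earlier. Altogether $|A\cap B|\ge(j_0-1)+(t-j_0+1)=t$, proving the claim.

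It then follows that two $k$-subsets of the same colour meet in at least $t=i+1>i$ points, so they are non-adjacent in $K(n,k,i)$; hence $\phi$ is a proper colouring and $\chi(K(n,k,i))\le\binom{m}{t}=\binom{n-2k+2(i+1)}{i+1}$. I expect the only real difficulty to be the bookkeeping in the claim: reverse-engineering $m$ so that the interval $I$ has precisely the size making the inclusion–exclusion count equal $t-j_0+1$, and then verifying carefully that the common elements coming from the tails are disjoint from those coming from the uncapped prefix. The degenerate cases ($i=k$, or $n$ too small for $K(n,k,i)$ to have an edge) are routine and I would dispose of them at the outset.
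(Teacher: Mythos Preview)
The paper does not supply its own proof of this theorem; it is simply quoted from \cite{JARAFI} without argument, so there is no paper proof to compare your attempt against.

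On its own merits, your main argument is correct and is essentially the standard one: the colouring $\phi(A)=\{\min(a_j,c_j):1\le j\le t\}$ with $c_j=m-t+j$ is well defined as a $t$-subset of $\{1,\dots,m\}$, and your prefix/tail inclusion--exclusion count cleanly yields $|A\cap B|\ge t$ whenever $\phi(A)=\phi(B)$. The bookkeeping (size of the interval $I$, disjointness of the two batches of common elements, the need for $t\le k$ so that the tails fit in $I$) all checks out.

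One small correction in your handling of the degenerate range. First, $m<t$ is equivalent to $2k-n>i+1$, not $2k-n>i$ as you wrote. More importantly, when $m<t$ the right-hand side of the stated inequality is $\binom{m}{t}=0$, while the (edgeless but non-empty) graph has chromatic number $1$; so the inequality as literally stated actually \emph{fails} there, and ``the inequality is immediate'' is not correct. This is a defect in the theorem's stated level of generality rather than in your construction---the source presumably assumes something like $n\ge 2k-i-1$, i.e.\ $m\ge t$---but you should flag that hypothesis rather than claim the bound holds unconditionally.
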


\begin{theorem}{\rm (\cite{JARAFI})}
\label{thm:jarafi}
For any $0< i+1< k<n$, we have $\chi(K(n+2,k+1,i))\le \chi(K(n,k,i)))$.
In particular, for $k\ge 3$:
\begin{itemize}
\item $\chi(K(2k,k,1))\le \chi(K(6,3,1))\le 6$;
\item $\chi(K(2k+1,k,1))\le \chi(K(7,3,1))\le 9$;
\item $\chi(K(2k+2,k,i))\le \chi(K(8,3,1))$. 
\end{itemize}
\end{theorem}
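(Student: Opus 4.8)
The statement consists of a recursive inequality, $\chi(K(n+2,k+1,i))\le\chi(K(n,k,i))$, together with the three ``in particular'' consequences obtained by iterating it, so the plan has two stages.

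For the recursion the plan is to construct an explicit graph homomorphism
\[
\varphi\colon K(n+2,k+1,i)\longrightarrow K(n,k,i),
\]
since a homomorphism into $K(n,k,i)$ composed with an optimal proper colouring of $K(n,k,i)$ produces a proper colouring of $K(n+2,k+1,i)$ with $\chi(K(n,k,i))$ colours. Viewing $[n+2]$ as $[n]\cup\{n+1,n+2\}$, for a $(k+1)$-subset $A$ I would write $A_0=A\cap[n]$ and $t=|A\cap\{n+1,n+2\}|$, so that $|A_0|=k+1-t$, and set
\[
\varphi(A)=\begin{cases}
A_0\setminus\{\max A_0\}, & t=0,\\
A_0, & t=1,\\
A_0\cup\{\max([n]\setminus A_0)\}, & t=2.
\end{cases}
\]
Because $k<n$ this is well defined and $|\varphi(A)|=k$ always. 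The idea behind the definition is that for $t\in\{0,1\}$ one has $\varphi(A)\subseteq A$, so replacing $A$ by $\varphi(A)$ can only shrink pairwise intersections, while for $t=2$ a single index is \emph{added}, and taking the \emph{largest} available index is exactly what keeps this enlargement under control.

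The real work is the verification that $|A\cap B|\le i$ forces $|\varphi(A)\cap\varphi(B)|\le i$ (this automatically gives $\varphi(A)\neq\varphi(B)$ since $i<k$, so edges go to edges). I would split into cases according to the pair $(t_A,t_B)$. If $t_A,t_B\in\{0,1\}$ then $\varphi(A)\cap\varphi(B)\subseteq A\cap B$ and there is nothing to prove. The cases that carry content are those involving a set of type $2$: there one must bound the effect of the freshly added index $M=\max([n]\setminus A_0)$, and the crucial elementary fact is that every element of $[n]$ exceeding $M$ already lies in $A_0$; using this one shows that whenever $M$ happens to land in $\varphi(B)$, either it coincides with (or is absorbed by) the $\max$ deleted on the other side, or there is correspondingly one more ``free'' slot in $A\cap B$. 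Carrying out this case analysis cleanly — in particular the type-$2$ versus type-$2$ subcase, which needs the inequality $|A_0\cap B_0|\le i-2$ — is the main obstacle; the rest is bookkeeping.

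Finally, the three displayed inequalities follow by iterating the recursion $k-3$ times, which is legitimate because the hypothesis $0<i+1<k<n$ is inherited at each step (as long as the middle parameter does not drop below $3$): one reaches $K(6,3,1)$, $K(7,3,1)$, and $K(8,3,1)$ respectively. The values $\chi(K(6,3,1))\le 6$ and $\chi(K(8,3,1))\le 15$ are then instances of the bound $\chi(K(n,k,i))\le\binom{n-2k+2(i+1)}{i+1}$ recorded above (namely $\binom{4}{2}$ and $\binom{6}{2}$), while $\chi(K(7,3,1))\le 9$ needs a small separate argument improving the value $\binom{5}{2}=10$ that the general bound supplies.
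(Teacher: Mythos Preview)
The paper does not prove this theorem at all: it is quoted verbatim from the reference \cite{JARAFI}, so there is no ``paper's own proof'' to compare against. Your task is therefore really to reproduce (or replace) the argument of Jarafi and Alipour.

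Your homomorphism $\varphi$ is correct and is essentially the standard construction. The verification goes through exactly as you outline: for $(t_A,t_B)\in\{0,1\}^2$ one has $\varphi(A)\cap\varphi(B)\subseteq A\cap B$; for $(2,1)$ and $(2,2)$ the loss of one or two elements of $\{n+1,n+2\}$ from $A\cap B$ compensates for the at most one or two new coincidences created by $M_A,M_B$; and for $(2,0)$ the observation that any $x>M_A$ in $[n]$ lies in $A_0$ forces $\max B_0\in A_0\cap B_0$ whenever $M_A\in B_0\setminus\{\max B_0\}$, so the added element is offset by the deleted one. The iteration down to $K(6,3,1)$, $K(7,3,1)$, $K(8,3,1)$ is also fine, and $\chi(K(6,3,1))\le\binom{4}{2}=6$ follows from the cited binomial bound.

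There is one genuine gap. You correctly note that the binomial bound gives only $\chi(K(7,3,1))\le\binom{5}{2}=10$, and you defer the improvement to $9$ to ``a small separate argument'' that you do not supply. Since the inequality $\chi(K(7,3,1))\le 9$ is part of the statement you are proving, this cannot be left open: you must either exhibit an explicit $9$-colouring of the $35$ vertices of $K(7,3,1)$ or give some other argument. (Also, a minor remark: the third bullet in the statement carries no numerical bound on $\chi(K(8,3,1))$, so your comment about $\binom{6}{2}=15$ is superfluous there; the paper handles $K(8,3,1)$ separately in Proposition~\ref{prp:newbound}.)
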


In the following proposition, we give an upper bound on the chromatic number of $K(8,3,1)$.
Note that by Theorem~\ref{thm:jarafi} this upper bound implies the same upper bound on $\chi(J(2k+2,k,i))$, for $0< i+1< k<n$ and $k\ge 3$.

\begin{proposition}
\label{prp:newbound}
We have $\chi(K(8,3,1))\le 12$.
\end{proposition}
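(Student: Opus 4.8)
The goal is to exhibit a proper coloring of $K(8,3,1)$ using at most $12$ colors. The vertices are the $\binom{8}{3}=56$ triples of $\{1,\ldots,8\}$, and two triples are adjacent when they share at most one element, i.e.\ they are \emph{non}-adjacent exactly when they share two elements. So a color class must be a family of triples that pairwise intersect in exactly two elements (or are equal). The plan is to classify such families: if three triples pairwise meet in two points, then either they all contain a common pair $\{a,b\}$ (a ``sunflower'' with core of size $2$), or they are the four triples inside a fixed $4$-set — but four triples on a $4$-set pairwise intersect in $2$ elements as well. Hence every independent set of $K(8,3,1)$ is contained either in a ``book'' $B_{ab}=\{\,\{a,b,x\}:x\notin\{a,b\}\,\}$ (which has $6$ triples) or in a ``tetrahedron'' $T_S=\binom{S}{3}$ for a $4$-set $S$ (which has $4$ triples). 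I would first prove this structural dichotomy carefully, as it is the conceptual core.

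Given the dichotomy, the task reduces to covering all $56$ triples by $12$ sets, each a sub-family of some book or some tetrahedron. A natural attempt is to use tetrahedra: the $56$ triples must be partitioned (or just covered) by $4$-sets' triple-systems, but a single $4$-set only covers $4$ triples, so one needs $14$ tetrahedra to cover everything with tetrahedra alone, which is too many. Books are more efficient ($6$ triples each), but $\binom{8}{2}=28$ pairs give $28$ books and each triple lies in $3$ books, so a minimum cover by books is a covering-design question. The right approach is a hybrid: I would take a Steiner-type or near-resolvable family of books. Concretely, partition the $\binom{8}{2}=28$ edges of $K_8$ into perfect matchings — $K_8$ is $1$-factorable into $7$ perfect matchings $M_1,\ldots,M_7$, each with $4$ edges. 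For a single perfect matching $M=\{a_1b_1,a_2b_2,a_3b_3,a_4b_4\}$, the four books $B_{a_ib_i}$ are pairwise ``almost disjoint'': a triple $\{a_i,b_i,x\}$ could also contain $a_jb_j$ only if $x\in\{a_j,b_j\}$. So the four books of one $1$-factor together cover $4\cdot 6 - (\text{overlaps})$ triples; I expect each $1$-factor's four books to cover a large fraction of the $56$ triples, and I would check that three well-chosen $1$-factors (giving $12$ books, hence $12$ colors) cover all $56$ triples, using the fact that each triple $\{x,y,z\}$ spans three pairs, and three $1$-factors hit $3\cdot 4=12$ of the $28$ pairs in a spread-out way.

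To make the counting honest: there are $\binom{8}{3}=56$ triples; a triple $\{x,y,z\}$ is covered by book $B_{ab}$ iff $\{a,b\}\subset\{x,y,z\}$. So a set of books $\{B_e : e\in F\}$, $F\subseteq E(K_8)$, covers all triples iff every triple contains at least one edge of $F$, i.e.\ iff $F$ is an edge set such that $\overline{F}$ (the complement in $K_8$) is triangle-free. By Turán's theorem the maximum triangle-free graph on $8$ vertices is $K_{4,4}$ with $16$ edges, so $|\overline F|\le 16$ forces $|F|\ge 28-16=12$. Thus \emph{exactly} $12$ books suffice, and one must take $F$ to be the complement of a copy of $K_{4,4}$: split $\{1,\ldots,8\}=A\cup B$ with $|A|=|B|=4$, and let $F$ consist of all $\binom{4}{2}+\binom{4}{2}=12$ edges inside $A$ or inside $B$. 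Then a triple missing $F$ would be an independent set of $K_A\cup K_B$, i.e.\ have at most one vertex in each part — impossible for a $3$-set. So $\{B_e:e\in F\}$ is a cover of all $56$ triples by $12$ books, and since each book is an independent set of $K(8,3,1)$, assigning color $e$ to each triple in $B_e$ (breaking ties arbitrarily) gives a proper $12$-coloring, proving $\chi(K(8,3,1))\le 12$.

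**Main obstacle.** The structural dichotomy (every independent set lies in a book or a tetrahedron) is the step requiring genuine care, but it is not actually \emph{needed} for the upper bound — the Turán/complement-of-$K_{4,4}$ construction gives the $12$-book cover directly, and books are independent sets by definition of the intersection condition. So the real content is just verifying that the $12$ edges inside the two halves form a cover, which is the short Turán observation above; the only thing to be careful about is that ties (a triple lying in two chosen books) are broken consistently so that color classes remain independent — which they do automatically, since each chosen color class is a \emph{sub}family of a single book.
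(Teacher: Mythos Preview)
Your proof is correct and is in fact cleaner than the paper's. Both produce a proper $12$-coloring, but by different constructions. The paper writes down an explicit coloring that uses four full ``books'' $B_{12},B_{34},B_{56},B_{78}$ (colors $1$--$4$), four partial books $B_{14},B_{23},B_{58},B_{67}$ restricted to the third element lying in the opposite half (colors $5$--$8$), and four ``tetrahedra'' $\binom{S}{3}$ for the $4$-sets $S\in\{\{1,3,5,7\},\{1,3,6,8\},\{2,4,5,7\},\{2,4,6,8\}\}$ (colors $9$--$12$), and then checks by hand that every triple receives a color and that each class is independent. Your construction instead uses only books: taking $F$ to be the $12$ edges lying inside the two halves of a balanced bipartition of $\{1,\ldots,8\}$, every triple contains an edge of $F$ by pigeonhole, so the $12$ books $\{B_e:e\in F\}$ cover $V(K(8,3,1))$, and since each $B_e$ is independent any choice function gives a proper $12$-coloring. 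The Tur\'an observation that $\overline{F}$ must be triangle-free, hence $|F|\ge 28-16=12$, additionally shows your cover is minimum among book-only covers; the paper's ad hoc coloring offers no such optimality statement. Your preliminary discussion of the independent-set dichotomy and the $1$-factorization attempt is not needed for the bound, as you yourself note, and could be dropped without loss.
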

\begin{proof}
We claim that the mapping $c:V(K(8,3,1))\rightarrow \{1,\ldots, 12\}$, defined by 
$$c(A)= \left\{
    \begin{array}{cl}
        i, & \text{ if } \{2i-1,2i\}\subseteq A,\ 1\le i\le 4;\\
        5, &\text{ if } A=\{1,4,j\},\ j\in\{5,6,7,8\};\\
        6, &\text{ if } A=\{2,3,j\},\ j\in\{5,6,7,8\};\\
        7, &\text{ if } A=\{j,5,8\},\ j\in\{1,2,3,4\}; \\
        8, &\text{ if } A=\{j,6,7\},\ j\in\{1,2,3,4\}; \\
        9, &\text{ if } A\subseteq\{1,3,5,7\}; \\
        10, & \text{ if } A\subseteq\{1,3,6,8\}; \\
        11, &\text{ if } A\subseteq\{2,4,5,7\}; \\
        12, &\text{ otherwise (if } A\subseteq\{2,4,6,8\});
    \end{array}
\right.
$$
is a proper coloring of $K(8,3,1)$ with twelve colors.

We start by proving that $c$ is well defined (i.e., every vertex $A$ of $K(8,3,1)$ receives a unique color by the above definition). First note that at least two elements of $A$ are either in $\{1,2,3,4\}$ or in $\{5,6,7,8\}$. Suppose, without loss of generality, that at least two elements of $A$ are in $\{1,2,3,4\}$.
If $\{1,2\}\subseteq A$, $\{3,4\}\subseteq A$, $\{1,4\}\subseteq A$ or $\{2,3\}\subseteq A$, then $c(A)\in\{1,2,3,4,5,6\}$. Consequently, by excluding this case, either $\{1,3\}\subseteq A$ or $\{2,4\}\subseteq A$, and consequently $A$ has a color among $\{9,10,11,12\}$.

Now, we prove that for any two adjacent vertices $A$ and $B$ of $K(8,3,1)$, we have $c(A)\neq c(B)$. If $c(A)=c(B)$ and $c(A)\in\{1,2,3,4,5,6,7,8\}$, then $A$ and $B$ have two common elements and are thus not adjacent.
Since any two vertices, which are subsets of a set of size $4$, have two elements in common, we infer that $c(A)=c(B)$ and $c(A)\in\{9,10,11,12\}$ implies that $A$ and $B$ are not adjacent.
\end{proof}

\subsection{Colorings of exact distance graphs of hypercubes}
Bounds or exact values are known for the chromatic number of exact distance-$p$ graph of the hypercube. We skip mentioning numerous results about the chromatic number of $Q_n\pow{2}$ since by Proposition~\ref{prop:q2} it is in relation with the chromatic number of the second power of the hypercube.

\begin{theorem}{\rm (\cite{PAYAN,ZIEGLER})}
If $n$ is an odd integer, then $\chi(Q_{n}\pow{n-1})=  4.$
\end{theorem}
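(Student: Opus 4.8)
The plan is to exploit the isomorphism $Q_n\pow{n-1}\cong Q_n$ established in the proposition just above (for even $n$), together with Lemma~\ref{lem:zieg}(ii) for the parity behaviour, so that the computation reduces to the chromatic number of a hypercube. Since $n$ is odd, $n-1$ is even and we are in the regime governed by Proposition~\ref{prop:q2}: write $n-1=2p$, so that $Q_n\pow{n-1}=Q_n\pow{2p}=2\bigl(Q_{n-1}\pow{2p}\uplus Q_{n-1}\pow{2p-1}\bigr)$. Here $n-1$ is even, and $Q_{n-1}\pow{2p}=Q_{n-1}\pow{n-1}\cong Q_{n-1}$ by the preceding proposition (applied to the even integer $n-1$). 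The term $Q_{n-1}\pow{2p-1}=Q_{n-1}\pow{n-2}$ has no edges, since the diameter of $Q_{n-1}$ is $n-1$ but more to the point $n-2>\rad(Q_{n-1})=\lceil (n-1)/2\rceil$ once $n\ge 5$ (for $n=3$ one checks the tiny cases $Q_3\pow{2}$ directly). Hence each of the two connected components of $Q_n\pow{n-1}$ is isomorphic to $Q_{n-1}$ with some isolated vertices attached by the $\uplus$, so $\chi(Q_n\pow{n-1})=\chi(Q_{n-1})=2$ — which is wrong, so this naive route fails, and the honest argument must instead use the isomorphism $Q_n\pow{n-1}\cong Q_n$ directly.

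So the clean approach is: first invoke the proposition that $Q_n\pow{n-1}\cong Q_n$ for even $n$; but here $n$ is odd. The correct statement to use is Lemma~\ref{lem:zieg}(ii): since $Q_n$ is bipartite and $n-1$ is even when $n$ is odd — no, $n-1$ is even, so Lemma~\ref{lem:zieg}(i) applies and $Q_n\pow{n-1}$ is disconnected. The components are described by Remark~\ref{hyprem} with $p=n-1$: one component induced by the even levels, one by the odd levels, and they are isomorphic. So it suffices to show that one such component, say the subgraph $C$ induced on $\bigcup_j L_{2j}^n$, has chromatic number exactly $4$. For the lower bound, note that by Proposition~\ref{hypeven} with $p=n-1$, the graph $Q_n\pow{n-1}$ contains $J(n,i,i-(n-1)/2)$ as an induced subgraph for $i\in\{(n-1)/2,\dots,(n+1)/2\}$; taking $i=(n+1)/2$ gives $J(n,(n+1)/2,1)$, and one should identify a suitable Kneser-type subgraph of large chromatic number inside it, or argue directly that $C$ contains an odd cycle and cannot be $3$-coloured. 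For the upper bound one exhibits an explicit $4$-colouring, most transparently by again using $Q_n\pow{n-1}\cong Q_n$: the hypercube $Q_n$ is bipartite, hence $2$-colourable, so its two copies give a $4$-colouring of $2Q_n$.

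Concretely, I would run the argument as follows. Since $n$ is odd, $n-1$ is even; by the proposition above (stated for even integers) applied to the even integer $n-1$ we do not get what we want, so instead I reason on $Q_n\pow{n-1}$ itself. By Remark~\ref{hyprem}, $Q_n\pow{n-1}=2C$ where $C$ is one of the two isomorphic components. It remains to prove $\chi(C)=4$, equivalently $\chi(Q_n\pow{n-1})=4$. Upper bound: partition $V(Q_n)$ by parity of weight into $E$ (even) and $O$ (odd); within $E$, the map $x\mapsto f(x)$ of the proposition above restricts to a bijection onto a hypercube on the same vertex set, realizing the component as $\cong Q_{n-1}$ (the parity class of an $n$-cube is a half-cube, and the argument in the proof of $Q_n\pow{n-1}\cong Q_n$ shows the exact-distance adjacency becomes hypercube adjacency on $n-1$ coordinates); since $Q_{n-1}$ is bipartite, $\chi(C)\le 2$, and putting the two components on disjoint colour sets gives $\chi(Q_n\pow{n-1})\le 4$ — again too small. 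The resolution, which I believe is the intended one, is that the two components are \emph{not} simply two copies of $Q_{n-1}$: the relevant isomorphism $Q_n\pow{n-1}\cong Q_n$ holds for \emph{even} $n$, and for \emph{odd} $n$ the components genuinely have chromatic number $4$ because each contains $J(n, (n+1)/2, 1)$, and $\chi(J(2k+1,k,1))\ge 4$ (it contains the odd graph $J(2k+1,k,0)=K(2k+1,k,0)$ as a spanning subgraph? no — rather one uses Theorem~\ref{thm:bobu} / Theorem~\ref{thm:lovasz} style lower bounds).

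The honest summary of the main obstacle: the delicate point is the \emph{lower bound} $\chi(Q_n\pow{n-1})\ge 4$, since the trivial bounds (it is $n$-regular, contains triangles) only give $\ge 3$; one must locate a structure of chromatic number $4$ — naturally an odd graph or small Kneser graph $J(n,(n+1)/2,1)$ supplied by Proposition~\ref{hypeven} — inside it. The upper bound $\le 4$ is the easier half and follows from an explicit colouring: colour a vertex $x$ by the pair $\bigl(\text{weight}(x)\bmod 2,\ \sigma(x)\bigr)\in\{0,1\}^2$ where $\sigma(x)$ is a second $\{0,1\}$-invariant (for instance the parity of the number of ``odd blocks'' $x_{2i+1,2i+2}$, exactly the type A/B distinction from the proof of $Q_n\pow{n-1}\cong Q_n$), and verify that vertices at distance exactly $n-1$ always differ in at least one coordinate of this pair. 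This gives $\chi(Q_n\pow{n-1})\le 4$, and combined with the lower bound yields the claimed equality.
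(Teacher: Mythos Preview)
The paper does not prove this theorem; it is quoted as a result of Payan and Ziegler with no argument given. So there is no ``paper's own proof'' to compare against, and I can only assess whether your proposal constitutes a proof.

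It does not. What you have written is a sequence of attempted reductions, each of which you yourself flag as failing, and the final sketch is neither complete nor correct. Concretely:

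\textbf{The decomposition is mishandled.} In $Q_n\pow{n-1}=2\bigl(Q_{n-1}\pow{n-1}\uplus Q_{n-1}\pow{n-2}\bigr)$ you have swapped the two pieces. The isomorphism $Q_m\pow{m-1}\cong Q_m$ (for even $m$) applied with $m=n-1$ gives $Q_{n-1}\pow{n-2}\cong Q_{n-1}$, not $Q_{n-1}\pow{n-1}\cong Q_{n-1}$; the latter is simply the antipodal perfect matching. Your companion claim that $Q_{n-1}\pow{n-2}$ is edgeless because ``$n-2>\rad(Q_{n-1})=\lceil (n-1)/2\rceil$'' is wrong twice over: the hypercube is vertex-transitive, so $\rad(Q_{n-1})=\diam(Q_{n-1})=n-1$, and $Q_{n-1}\pow{n-2}\cong Q_{n-1}$ certainly has edges. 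Correcting the swap gives the right picture --- each component of $Q_n\pow{n-1}$ is a copy of $Q_{n-1}$ with the antipodal matching added --- but you never analyse that graph.

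\textbf{The proposed $4$-colouring is undefined.} The invariant ``type A/B'' pairs the coordinates as $(x_1,x_2),(x_3,x_4),\ldots,(x_{n-1},x_n)$ and therefore makes sense only for even $n$; here $n$ is odd. You would need a different second $\{0,1\}$-invariant and an actual verification that vertices at Hamming distance $n-1$ always differ in it.

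\textbf{There is no lower bound.} The generalized Johnson subgraph you reach via Proposition~\ref{hypeven} is $J(n,(n+1)/2,1)\cong J(n,(n-1)/2,0)$ (Proposition~\ref{isojohnson}), the odd graph, whose chromatic number by Theorem~\ref{thm:lovasz} is $n-(n-1)+2=3$, not $4$. So the inequality $\chi(Q_n\pow{n-1})\ge 4$ does not drop out of the tools in this paper; it is precisely the non-trivial content of the cited references, and you have not supplied an argument for it.
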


\begin{theorem}{\rm (\cite{ZIEGLER})}
We have $\chi(Q_{6}\pow{4})= 7$, $\chi(Q_{7}\pow{4})= 8$,  $\chi(Q_{8}\pow{4})= 8$, $\chi(Q_{8}\pow{6})\le 8$ and $\chi(Q_{9}\pow{6})\le 16$.
\end{theorem}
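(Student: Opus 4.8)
The plan is to split each of the five claims into an upper bound (a colouring) and a lower bound (a non‑colourability argument), and in every case to first pass to a single connected component. By Proposition~\ref{prop:q2} together with Remark~\ref{hyprem}, for even $p$ the graph $Q_n\pow{p}$ is a disjoint union of two isomorphic connected pieces, so $\chi(Q_n\pow{p})=\chi\bigl(Q_{n-1}\pow{p}\uplus Q_{n-1}\pow{p-1}\bigr)$; iterating this peels the hypercube down one dimension at a time. Two cheap further inputs are useful: (i) since $Q_{n-1}$ embeds isometrically in $Q_n$, the graph $Q_{n-1}\pow{p}$ is an \emph{induced} subgraph of $Q_n\pow{p}$, hence $\chi(Q_{n-1}\pow{p})\le\chi(Q_n\pow{p})$; and (ii) for odd $p$ the graph $Q_m\pow{p}$ is bipartite by Lemma~\ref{lem:zieg}, so $\chi(Q_m\pow{p})\le 2$.

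The two ``$\le$'' statements then come almost for free, using (ii) and the product‑colouring inequality $\chi(G\uplus H)\le\chi(G)\,\chi(H)$ (colour a vertex by the pair of its colours in $G$ and in $H$). Indeed the component of $Q_7\pow{6}$ is $Q_6\pow{6}\uplus Q_6\pow{5}$ with both factors bipartite (the first is even a perfect matching), so $\chi(Q_7\pow{6})\le 4$; the component of $Q_8\pow{6}$ is $Q_7\pow{6}\uplus Q_7\pow{5}$ with $\chi(Q_7\pow{5})\le 2$, giving $\chi(Q_8\pow{6})\le 8$; and the component of $Q_9\pow{6}$ is $Q_8\pow{6}\uplus Q_8\pow{5}$, whence $\chi(Q_9\pow{6})\le \chi(Q_8\pow{6})\cdot 2\le 16$.

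The three exact values for distance $4$ are where the work lies. For the lower bounds, Proposition~\ref{hypeven} (with $n=6$, $p=4$) and Theorem~\ref{thm:bobu} show that $Q_6\pow{4}$ contains $J(6,3,1)$ as an induced subgraph, so $\chi(Q_6\pow{4})\ge 6$; by the monotonicity in (i), $\chi(Q_7\pow{4})\ge\chi(Q_6\pow{4})$ and $\chi(Q_8\pow{4})\ge\chi(Q_7\pow{4})$, so in each case only one extra colour must be certified. For the upper bounds one would exhibit explicit colourings of the relevant components, built layer by layer: colour each weight‑layer $L_j$ optimally using the patterns behind Theorems~\ref{thm:lovasz} and \ref{thm:bobu} (recalling from Proposition~\ref{hypeven} and Proposition~\ref{isojohnson} which generalized Johnson graph each layer carries), then reconcile the colourings across consecutive layers using the fact that a weight‑$i$ set is adjacent to a weight‑$(i+2)$ set only when they meet in exactly $i-2$ coordinates (and likewise for weight‑$(i+1)$ neighbours). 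Concretely this means producing one $7$‑colouring of a $32$‑vertex graph for $Q_6\pow{4}$, and $8$‑colourings of Cayley graphs of $\mathbb{Z}_2^6$ and $\mathbb{Z}_2^7$ for $Q_7\pow{4}$ and $Q_8\pow{4}$.

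The main obstacle is exactly these sharp distance‑$4$ statements, and it has a structural root: the target value $7$ is not a power of $2$, so no colouring coming from a coset partition of $\mathbb{Z}_2^{n-1}$ by a linear subspace can achieve it — the colour classes of an optimal colouring must be genuinely non‑affine independent sets. Hence the upper bound forces an ad hoc non‑linear construction, while the matching lower bound cannot be read off any clean subgraph (the clique number here is only $4$) or linear‑algebraic invariant; one is left with a rigidity argument — classify the possible optimal colourings of the Johnson layer $J(6,3,1)$ (which has small independence number) and show that none of them extends to the weight‑$1$ and weight‑$5$ layers with only $6$ colours — and for $n=7,8$ this case analysis grows large enough that it is most naturally settled by computer search, which is the route taken in~\cite{ZIEGLER}.
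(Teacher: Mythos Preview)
The paper does not prove this theorem; it is quoted verbatim from~\cite{ZIEGLER} as background (indeed, the paper's own contribution is to \emph{improve} the last two inequalities, bringing $\chi(Q_8\pow{6})$ down to $7$ and $\chi(Q_9\pow{6})$ down to $15$, see Table~\ref{sum}). So there is no proof in the paper to compare your attempt against.

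On the merits of your sketch itself: the derivation of the two ``$\le$'' claims via repeated use of Proposition~\ref{prop:q2}, Lemma~\ref{lem:zieg}(ii), and the product bound $\chi(G\uplus H)\le\chi(G)\,\chi(H)$ is correct and self-contained; this is a clean way to obtain $\chi(Q_8\pow{6})\le 8$ and $\chi(Q_9\pow{6})\le 16$.

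For the three exact values at distance~$4$, however, what you have written is a plan rather than a proof. Your lower bound $\chi(Q_6\pow{4})\ge 6$ via the induced copy of $J(6,3,1)$ falls one short of the target~$7$; you do not actually exhibit a $7$-colouring (or the $8$-colourings for $n=7,8$); and you close by saying that both the missing lower bound and the constructions are ``most naturally settled by computer search, which is the route taken in~\cite{ZIEGLER}''. That is an honest assessment, but it means your proposal for the three equalities is not an independent argument --- it is, like the paper itself, a deferral to~\cite{ZIEGLER}.
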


\begin{theorem}{\rm ([\cite{FWFULIXI})}
We have $\chi(Q_{n}\pow{d})\le 2^{\lceil \log_2(1+\binom{n-1}{d-1}) \rceil}$.
\end{theorem}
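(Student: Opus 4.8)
The plan is to obtain a proper colouring of $Q_n\pow{d}$ as a group homomorphism $c\colon\mathbb{F}_2^n\to\mathbb{F}_2^r$, where $r=\lceil\log_2(1+\binom{n-1}{d-1})\rceil$, so that the number of colours used is exactly $2^r$. Identify $V(Q_n)$ with $\mathbb{F}_2^n$, so that two vertices are adjacent in $Q_n\pow{d}$ precisely when their difference (equivalently, their symmetric difference as subsets) has Hamming weight $d$. Any $\mathbb{F}_2$-linear map $c(x)=Hx$, with $H$ an $r\times n$ matrix over $\mathbb{F}_2$, satisfies $c(x)-c(y)=H(x\oplus y)$, and the latter is the sum of those columns of $H$ indexed by the support of $x\oplus y$. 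Hence $c$ is a proper colouring of $Q_n\pow{d}$ if and only if no $d$ columns of $H$ sum to $0$ in $\mathbb{F}_2^r$ (equivalently, the linear code with parity-check matrix $H$ has no codeword of weight exactly $d$).

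The heart of the argument is to construct such an $H$, which I would do greedily, in the spirit of the Gilbert--Varshamov bound. Build the columns $v_1,\dots,v_n\in\mathbb{F}_2^r$ one at a time, maintaining the invariant that no $d$ of the chosen columns sum to $0$. Suppose $v_1,\dots,v_j$ have been chosen with $j\le n-1$. A choice of $v_{j+1}$ violates the invariant only if $v_{j+1}=\sum_{i\in S}v_i$ for some $(d-1)$-subset $S\subseteq\{1,\dots,j\}$; there are at most $\binom{j}{d-1}\le\binom{n-1}{d-1}<2^r$ such forbidden values, so a legal $v_{j+1}$ exists. Any $d$-subset of $\{1,\dots,j+1\}$ either lies inside $\{1,\dots,j\}$ (its columns do not sum to $0$ by the invariant) or consists of $v_{j+1}$ together with $d-1$ of the earlier columns (excluded by the choice of $v_{j+1}$), so the invariant persists. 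After $n$ steps we have the desired $H$.

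Putting this together, $c(x)=Hx$ is a proper colouring of $Q_n\pow{d}$ with at most $2^r=2^{\lceil\log_2(1+\binom{n-1}{d-1})\rceil}$ colours, which is the stated bound; this holds regardless of whether $Q_n\pow{d}$ is connected, since $c$ is defined on all of $V(Q_n)$. The only delicate point is the bookkeeping in the greedy step: one must check that the number of forbidden values is at most $\binom{j}{d-1}$, and that taking $r=\lceil\log_2(1+\binom{n-1}{d-1})\rceil$ is exactly what makes $2^r>\binom{n-1}{d-1}$ hold at every stage $j\le n-1$. The degenerate cases ($d=1$, or $j<d-1$ so that no constraints arise) are immediate. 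Beyond that I do not expect a genuine obstacle: once the reformulation ``$c(x)=Hx$ is proper iff no $d$ columns of $H$ sum to $0$'' is in place, the construction is routine.
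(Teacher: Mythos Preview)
The paper does not prove this theorem; it is quoted from Fu, Ling and Xing and stated without argument. Your proposal is nevertheless a correct self-contained proof. The reformulation---a linear map $c(x)=Hx$ is a proper colouring of $Q_n\pow{d}$ if and only if no $d$ columns of $H$ sum to zero over $\mathbb{F}_2$---is exactly right, and the Gilbert--Varshamov style greedy construction then goes through as you describe: at each step $j\le n-1$ there are at most $\binom{j}{d-1}\le\binom{n-1}{d-1}<2^r$ forbidden values for $v_{j+1}$, so a legal next column always exists, and the invariant that no $d$ chosen columns sum to zero is preserved. The boundary cases you flag ($d=1$, or $j<d-1$) cause no difficulty. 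Your argument is the standard coding-theoretic one for bounds of this shape and is essentially what appears in the cited source, even though the present paper omits it.
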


Table~\ref{sum} illustrates the upper bounds obtained in this section for small values of $n$. It can be observed that we have improved the results from Ziegler on $Q_{8}\pow{6}$ and $Q_{9}\pow{6}$. 
The lower bounds from Table~\ref{sum} are obtained by using Theorem~\ref{thm:lovasz} (by Proposition \ref{hypeven}, $Q_{n}\pow{p}$ contains $J(n,p/2,0))$ as induced graph).

\begin{table}
   \caption{\label{sum} Bounds on $\chi(Q_{n}\pow{p})$. Bold numbers represent  exact values, a pair $a$-$b$ represents a lower bound and an upper bound on $\chi(Q_{n}\pow{p})$.}
\begin{center}
\begin{tabular}{|c|c|c|c|c|}
\hline
$n \backslash p$ & \ \ \ \ \ \ 4 \ \ \ \ \ \ & \ \ \ \ \ \ 6 \ \ \ \ \ \ & \ \ \ \ \ \ 8 \ \ \ \ \ \ &  \ \ \ \ \ \ 10 \ \ \ \ \ \ \\ 
\hline
6 & \textbf{7} \cite{PAYAN} & \textbf{2} & & \\
7 & \textbf{8} \cite{ZIEGLER} & \textbf{4} \cite{ZIEGLER} & &  \\
 8 & \textbf{8} \cite{ZIEGLER} & 4-7 & \textbf{2} &     \\
  9 & \textbf{8} \cite{ZIEGLER} & 5-15 & 4-8 &    \\
  10 &   & 6-26 & 5-15 & \textbf{2}  \\
  \hline
\end{tabular}
\end{center}
\end{table}

Using the structural tools of the previous subsection, we derive new results about the chromatic number of $Q_{n}\pow{d}$ for $n-d \le 4$ improving some of the above results. (The situation when $n-d>4$ could be handled in a similar way.)
Recall that when $d$ is odd, $Q_{n}\pow{d}$ is bipartite, hence in the following we only consider even $d$.

\begin{theorem}
\label{thm:n_n-2}
If $n\ge 4$ is an even positive integer, then $$\chi(Q_{n}\pow{n-2})\le \chi(J(n,n/2,1))+2.$$
\end{theorem}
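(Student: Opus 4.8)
The plan is to exhibit an explicit proper coloring of $Q_n\pow{n-2}$ using $\chi(J(n,n/2,1))+2$ colors, by combining an optimal coloring of the generalized Johnson graph $J(n,n/2,1)$ with a two-coloring that handles everything outside the ``middle layer.'' By Remark~\ref{hyprem}, it suffices to color one of the two isomorphic connected components of $Q_n\pow{n-2}$, say the one induced by $\bigcup_{j} L_{2j}^n$ (the even-weight vertices); the other component is colored identically via the bit-complementation isomorphism. So throughout I work inside the even component.

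The key structural input is Proposition~\ref{hypeven} with $p=n-2$: for each $i\in\{n/2-1,n/2,n/2+1\}$ the layer $L_i^n$ induces a copy of $J(n,i,i-(n-2)/2)$ in $Q_n\pow{n-2}$, and these induced subgraphs are pairwise vertex-disjoint. Since $p=n-2$ is even, $n$ is even, so $n/2$ is an integer and the relevant layers are $L_{n/2-1}$, $L_{n/2}$, $L_{n/2+1}$ (weights of the same parity as... wait — one must check parity). Here is the first thing to verify: the even component contains $L_{n/2}$ exactly when $n/2$ is even, and contains $L_{n/2\pm 1}$ exactly when $n/2$ is odd; in either case the even component meets the ``middle three'' layers in either $L_{n/2}$ alone, or in $L_{n/2-1}\cup L_{n/2+1}$. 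I would organize the argument so that this case distinction is transparent, the point being that the middle layer $L_{n/2}$ induces $J(n,n/2,1)$, while the off-middle layers $L_{n/2\pm 1}$ induce $J(n,n/2-1,0)\cong J(n,n/2+1,0)$, a Kneser graph whose chromatic number is $n-2(n/2-1)+2=4$ by Theorem~\ref{thm:lovasz} — which is at most $\chi(J(n,n/2,1))+2$ for all $n\ge 4$ by the known lower bound $\chi(J(n,n/2,1))\ge n/2+1\ge 3$.

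Now the coloring. First I would show that in $Q_n\pow{n-2}$ (restricted to the even component), there are \emph{no} edges between $L_i$ and $L_j$ when $|i-j|\notin\{0,2\}$, and moreover that the only ``long-range'' adjacencies are between $L_i$ and $L_{i\pm 2}$; combined with disjointness from Proposition~\ref{hypeven}, the even component decomposes, as far as its adjacency structure grouped by layers is concerned, into the middle piece (on $L_{n/2}$, or on $L_{n/2-1}\cup L_{n/2+1}$) plus the remaining layers $L_i$ with $i<n/2-1$ or $i>n/2+1$. On the middle piece use an optimal coloring of $J(n,n/2,1)$ with colors $\{1,\dots,\chi(J(n,n/2,1))\}$ (or, in the off-middle parity case, a $4$-coloring of the Kneser graph, recoloring into a subset of those colors padded by the two extra ones). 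For the remaining layers, I claim the two colors $\{c+1,c+2\}$ (where $c=\chi(J(n,n/2,1))$) suffice: color $L_i$ with $c+1$ if $i\equiv n/2-2\pmod 4$ and with $c+2$ if $i\equiv n/2\pmod 4$ — i.e. alternate the two new colors on successive layers of the even component, so that any two layers receiving the same new color differ in weight by at least $4$ and hence, by the no-long-range-edges claim, are non-adjacent. One also needs that within a single off-middle layer $L_i$ there are no edges of $Q_n\pow{n-2}$ at all: a vertex of weight $i<n/2-1$ has all its $Q_n\pow{n-2}$-neighbors at weights $i-(n-2), i-(n-4),\dots$ intersected with $[0,n]$, and for $i\le n/2-2$ one checks the only feasible neighbor-weights are $i$-preserving moves requiring $\min(i,n-i)\ge (n-2)/2$, i.e. $i\in\{n/2-1,n/2,n/2+1\}$ — so indeed off-middle layers are independent sets. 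Finally one checks edges between the middle piece and its two neighboring layers are automatically bichromatic because those neighboring layers get new colors $c+1,c+2$ disjoint from $\{1,\dots,c\}$.

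The main obstacle I anticipate is the bookkeeping in the parity case split: deciding cleanly whether the even component contains the single middle layer $L_{n/2}$ or the pair $L_{n/2-1},L_{n/2+1}$, and then checking that in the latter case a proper coloring of $J(n,n/2-1,0)\cong J(n,n/2+1,0)$ (a Kneser graph, $4$ colors by Theorem~\ref{thm:lovasz}) extends — together with the alternating two-coloring of the outer layers — to a proper coloring of the whole component, in particular that the two copies $L_{n/2-1}$ and $L_{n/2+1}$, which \emph{are} joined by edges in $Q_n\pow{n-2}$, can be colored consistently (they should be viewed as a single induced copy of the Kneser graph on $L_{n/2-1}\cup L_{n/2+1}$ with the cross edges being exactly the $|A\cap B|=0$ pairs, which is again $J(n,n/2\mp1,0)$ up to the standard isomorphism). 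Everything else — disjointness, the independence of off-middle layers, the absence of long-range edges — follows directly from Lemma~\ref{lem:dist}-style weight counting and Proposition~\ref{hypeven}, and needs only routine verification.
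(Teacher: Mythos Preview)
Your central structural claim is wrong, and the error breaks the coloring. You assert that in $Q_n\pow{n-2}$ there are no edges between $L_i$ and $L_j$ when $|i-j|\notin\{0,2\}$, and you build the alternating two-coloring of the outer layers on this. But if $u\in L_i$ and $d_{Q_n}(u,v)=n-2$, then one changes $a$ ones to zeros and $n-2-a$ zeros to ones, with $\max(0,i-2)\le a\le i$; hence $v\in L_{n-i-2}\cup L_{n-i}\cup L_{n-i+2}$. The adjacency between layers is a \emph{reflection} around $n/2$, not a local $\pm 2$ step. Concretely, for $n=8$ the vertex $11000000\in L_{2}$ is adjacent (Hamming distance $6$) to $01111110\in L_{6}$, so $L_{n/2-2}$ and $L_{n/2+2}$ are joined by edges; under your rule both receive color $c+1$, and the coloring is improper.

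The fix is exactly the paper's argument, and it is simpler than what you attempted. Work in the component containing $L_{n/2}$ (this dispenses with your parity case split entirely, since the two components are isomorphic). Color $L_{n/2}$ optimally as $J(n,n/2,1)$. For any $u\in L_i$ with $i\le (n-4)/2$, the computation above gives every neighbor weight at least $n-i-2\ge n/2$, so the union $\bigcup_{i\le (n-4)/2}L_i$ is an independent set in $Q_n\pow{n-2}$ and can take a single new color; symmetrically $\bigcup_{i\ge (n+4)/2}L_i$ is independent and takes the other new color. No alternation, no mod-$4$ bookkeeping, and no separate treatment of $L_{n/2\pm 1}$ is needed.
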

\begin{proof}
Note that, by Remark~\ref{hyprem}, one connected component of $Q_{n}\pow{n-2}$ contains the vertices of both $L_{(n-2)/2}$  and $L_{(n+2)/2}$ and the other one the vertices of $L_{n/2}$.  

It is possible to color the vertices of $L_{n/2}$ with $\chi(J(n,n/2,1))$ colors.
Note that if two vertices $u$ and $v$ differ in exactly $n-2$ bits, $u\in L_{i}$, for $i\le (n-4)/2$, then it implies $v\in L_{j}$ for  $j\ge n/2$.
Consequently there is no edge between two vertices with less than $(n-4)/2$ bits $1$. Similarly, there is no edge between two vertices with more than $(n+4)/2$ bits $1$. Finally, we use two new colors to color all the vertices in $L_i$, for $i\le (n-4)/2$ with the same color and to color all the vertices in $L_i$, for $i\ge (n+4)/2$ with the same color.
\end{proof}

\begin{corollary}
If $n\ge 4$ is an even positive integer, then $\chi(Q_{n}\pow{n-2})\le  8$. In addition, $\chi(Q_{8}\pow{6})\le 7$.
\end{corollary}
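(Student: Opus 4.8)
The plan is to deduce the corollary directly from Theorem~\ref{thm:n_n-2}, which reduces everything to bounding $\chi(J(n,n/2,1))$ for even $n\ge 4$: once we know $\chi(J(n,n/2,1))\le 6$ in all these cases, the chain $\chi(Q_n\pow{n-2})\le\chi(J(n,n/2,1))+2\le 8$ gives the first claim, while a sharper estimate of $\chi(J(8,4,1))$ will give the improved bound for $Q_8\pow{6}$.

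First I would treat the generic range $n\ge 6$. Here $n/2\ge 3$, so the generalized Kneser graph $K(n,n/2,1)=K\bigl(2(n/2),n/2,1\bigr)$ is exactly of the form covered by the first bullet of Theorem~\ref{thm:jarafi}, which yields $\chi(K(n,n/2,1))\le\chi(K(6,3,1))\le 6$. Since $J(n,n/2,1)$ is a subgraph of $K(n,n/2,1)$, we get $\chi(J(n,n/2,1))\le 6$. (For $n=6$ one may alternatively invoke Theorem~\ref{thm:bobu} directly, which even gives $\chi(J(6,3,1))=6$.) Substituting into Theorem~\ref{thm:n_n-2} gives $\chi(Q_n\pow{n-2})\le 8$ for every even $n\ge 6$.

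It remains to handle $n=4$, where the hypothesis $0<i+1<k$ of Theorem~\ref{thm:jarafi} fails for $k=n/2=2$, so the Kneser reduction does not apply. This case is immediate by inspection: $J(4,2,1)$ is the line graph of $K_4$, i.e. the octahedron $K_{2,2,2}$, so $\chi(J(4,2,1))=3\le 6$, and Theorem~\ref{thm:n_n-2} gives $\chi(Q_4\pow{2})\le 5\le 8$. Finally, for the additional statement take $n=8$: by Theorem~\ref{thm:bobu} we have $\chi(J(8,4,1))=5$, and hence Theorem~\ref{thm:n_n-2} improves the bound to $\chi(Q_8\pow{6})\le 5+2=7$.

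The corollary is thus a formal consequence of Theorem~\ref{thm:n_n-2} together with the cited results on the chromatic numbers of generalized Johnson and Kneser graphs; there is no substantial difficulty. The only point deserving care is that the reduction in Theorem~\ref{thm:jarafi} requires $n/2\ge 3$, so the boundary value $n=4$ must be dispatched by the explicit octahedron identification rather than by the general machinery.
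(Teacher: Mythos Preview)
Your proof is correct and follows the same approach as the paper, combining Theorem~\ref{thm:n_n-2} with Theorem~\ref{thm:jarafi} for the general bound and with Theorem~\ref{thm:bobu} for the improved bound on $Q_8\pow{6}$. You are in fact more careful than the paper's own argument: the paper simply cites Theorem~\ref{thm:jarafi} for the first assertion without isolating the boundary case $n=4$, whereas you correctly observe that the hypothesis $k\ge 3$ fails there and dispatch it via the explicit identification $J(4,2,1)\cong K_{2,2,2}$.
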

\begin{proof}
The first assertion follows by combining Theorem~\ref{thm:n_n-2} (left bound) with Theorem~\ref{thm:jarafi}. The second assertion follows by combining Theorem~\ref{thm:n_n-2} (right bound) with Theorem~\ref{thm:bobu}.
\end{proof}

\begin{theorem}
\label{thm:n_n-3}
If $n\ge 5$ is an odd positive integer, then 
$$\chi(Q_{n}\pow{n-3})\le  \chi(J(n,(n-3)/2,0))+\chi(J(n,(n-1)/2,1))+1\le  \chi(K(7,3,1))+6.$$
\end{theorem}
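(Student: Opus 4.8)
The plan is to analyze the connected components of $Q_n\pow{n-3}$ separately using Remark~\ref{hyprem}. Since $n$ is odd and $n-3$ is even, $Q_n\pow{n-3}$ splits into two isomorphic components: one induced by $\cup_j L_{2j}^n$ (the ``even'' component) and one induced by $\cup_j L_{2j+1}^n$ (the ``odd'' component). First I would record which levels $L_i$ can actually be joined by an edge in $Q_n\pow{n-3}$: two vertices at distance exactly $n-3$ lie in levels $L_i$ and $L_j$ with $|i-j|$ of the same parity as $n-3$, and the crude count forces $i+j \ge n-3$ and $i+j\le n+3$; combined with the parity constraint this pins the ``interesting'' levels near the middle, namely $L_{(n-3)/2}$, $L_{(n-1)/2}$, $L_{(n+1)/2}$, $L_{(n+3)/2}$, while all levels far from the middle form independent sets (no two vertices in very low levels, or in very high levels, can be at distance $n-3$).

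Next I would color each component. By Proposition~\ref{hypeven} the vertices of a single level $L_i^n$ induce $J(n,i,i-(n-3)/2)$ inside $Q_n\pow{n-3}$, so $L_{(n-3)/2}^n$ induces $J(n,(n-3)/2,0)$ and $L_{(n+1)/2}^n$ induces $J(n,(n+1)/2,2)\cong J(n,(n-1)/2,1)$ by Proposition~\ref{isojohnson} (and likewise $L_{(n-1)/2}$, $L_{(n+3)/2}$ give the complementary Johnson graphs). One of the two components contains the levels $L_{(n-3)/2}$ and $L_{(n+1)/2}$ among the ``middle'' levels, the other contains $L_{(n-1)/2}$ and $L_{(n+3)/2}$; by the isomorphism of the two components it suffices to color one of them, say the one containing $L_{(n-3)/2}$ and $L_{(n+1)/2}$. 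Color $L_{(n-3)/2}$ with a palette of size $\chi(J(n,(n-3)/2,0))$ and $L_{(n+1)/2}$ with a disjoint palette of size $\chi(J(n,(n-1)/2,1))$ (using the isomorphism above), then use one extra color for all remaining vertices of that component — these are all vertices in levels $L_i$ with $i\le (n-5)/2$ or $i\ge (n+5)/2$ in the even/odd selection, which by the level analysis form an independent set, and crucially no such vertex is adjacent to a vertex of $L_{(n-3)/2}$ or $L_{(n+1)/2}$ of the wrong... — here I must check that the single extra color class really is independent from itself, which follows because any edge of $Q_n\pow{n-3}$ has at least one endpoint in a ``middle'' level. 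This gives the first inequality $\chi(Q_n\pow{n-3})\le \chi(J(n,(n-3)/2,0))+\chi(J(n,(n-1)/2,1))+1$.

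For the second inequality I would bound each term. The Kneser term is handled by Theorem~\ref{thm:lovasz}: $\chi(J(n,(n-3)/2,0)) = n-2\cdot\frac{n-3}{2}+2 = 5$. For $\chi(J(n,(n-1)/2,1))$, since $J(n,(n-1)/2,1)$ is a subgraph of $K(n,(n-1)/2,1)$, Theorem~\ref{thm:jarafi} (the $\chi(K(2k+1,k,1))\le\chi(K(7,3,1))$ line, applied with $k=(n-1)/2$) gives $\chi(J(n,(n-1)/2,1))\le \chi(K(7,3,1))$. Adding, $5 + \chi(K(7,3,1)) + 1 = \chi(K(7,3,1)) + 6$, as claimed. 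The main obstacle I anticipate is the careful bookkeeping of exactly which levels land in which component and verifying that the ``leftover'' vertices getting the single extra color genuinely induce an independent set and are non-adjacent to everything colored from the two Johnson-graph palettes — i.e., that distance exactly $n-3$ between a far-from-middle vertex and a near-middle vertex is impossible; this is a short parity-plus-counting argument but it is where an off-by-one could creep in. Everything else is assembling cited bounds.
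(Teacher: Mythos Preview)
Your overall strategy matches the paper's: split into the two isomorphic components, identify the induced Johnson graphs on the middle levels $L_{(n-3)/2}$ and $L_{(n+1)/2}$ via Propositions~\ref{hypeven} and~\ref{isojohnson}, and handle the outer levels cheaply. Your derivation of the second inequality is also correct and is exactly how the paper's bound is meant to be read.

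The gap is precisely where you suspected. The assertion that ``any edge of $Q_n\pow{n-3}$ has at least one endpoint in a middle level'' is false, so the union of the low and high outer levels is \emph{not} an independent set and cannot all receive a single extra color. Concretely, for $n=9$ take $u=100000000\in L_1=L_{(n-7)/2}$ and $v=101111110\in L_7=L_{(n+5)/2}$; then $d_{Q_9}(u,v)=6=n-3$, so $uv$ is an edge of $Q_9\pow{6}$ with both endpoints outside $\{L_3,L_5\}=\{L_{(n-3)/2},L_{(n+1)/2}\}$. In general, if $u\in L_i$ and $v\in L_j$ are at distance exactly $n-3$ then $i+j\in\{n-3,n-1,n+1,n+3\}$, and nothing prevents $i\le (n-7)/2$ together with $j\ge (n+5)/2$.

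The paper achieves the same color count with an asymmetric split: use the one \emph{new} color only for the high outer levels $L_i$ with $i>(n+3)/2$ (these are pairwise non-adjacent since then $i+j>n+3$), and color all low outer levels $L_i$ with $i<(n-3)/2$ by \emph{reusing} one fixed color from the $L_{(n-3)/2}$ palette. This is legitimate because the low outer levels are pairwise non-adjacent ($i+j<n-3$) and no low outer vertex is adjacent to any vertex of $L_{(n-3)/2}$ (since then $i+j\le (n-7)/2+(n-3)/2=n-5<n-3$). Low outer vertices \emph{can} be adjacent to $L_{(n+1)/2}$ or to high outer vertices, but those carry colors from the other palette or the new color, so no conflict arises. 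With this adjustment your count $\chi(J(n,(n-3)/2,0))+\chi(J(n,(n-1)/2,1))+1$ goes through.
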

\begin{proof}
Note that one connected component of $Q_{n}\pow{n-3}$ contains the vertices of both $L_{(n-3)/2}$ and $L_{(n+1)/2}$ and the other one the vertices of  both $L_{(n-1)/2}$ and $L_{(n+3)/2}$.
By Proposition \ref{hypeven} and its proof, the vertices from $L_{(n-3)/2}$ induce the graph $J(n,(n-3)/2,0)$ and the vertices from $L_{(n+1)/2}$ induce the graph $J(n,(n-1)/2,1)$. Also, by Proposition \ref{isojohnson}, we have $J(n,(n-3)/2,0)\cong J(n,(n+3)/2,3)$ and $J(n,(n-1)/2,1)\cong J(n,(n+1)/2,2)$.

It is possible to color the vertices of $L_{(n-3)/2}$ with $\chi(J(n,(n-3)/2,0))$ colors and to color the vertices of $L_{(n+1)/2}$ with $\chi(J(n,(n-1)/2,1))$ colors.

Note that for every two vertices $u$ and $v$ differing in exactly $n-3$ bits, $u\in L_{i}$, for $i\le (n-3)/2$, we have $v\in L_{j}$ for  $j\ge(n-3)/2$.
Consequently there is no edge between two vertices with less than $(n-3)/2$ bits $1$. Similarly, there is no edge between two vertices with more than $(n+3)/2$ bits $1$. Thus we can use just one new color for all the vertices in $L_i$, for $i> (n+3)/2$. Finally, note that no vertex of $L_{(n-3)/2}$ is adjacent to a vertex of $L_{i}$ for $i<(n-3)/2$, hence it is possible to re-use a color used for $L_{(n-3)/2}$ to color all the vertices in $L_i$, for $i< (n-3)/2$. 
\end{proof}

Combining Theorem~\ref{thm:n_n-3} with Theorem~\ref{thm:jarafi} we infer the following bound. 

\begin{corollary}
If $n\ge 5$ is an odd positive integer, then $\chi(Q_{n}\pow{n-3})\le  15$.
\end{corollary}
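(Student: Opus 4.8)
The plan is to substitute the bound from Theorem~\ref{thm:n_n-3} directly and then estimate each term using the results on generalized Kneser graphs that we have already collected. Concretely, Theorem~\ref{thm:n_n-3} gives
\[
\chi(Q_{n}\pow{n-3})\le \chi(K(7,3,1))+6
\]
for every odd $n\ge 5$, so the whole task reduces to bounding $\chi(K(7,3,1))$ by $9$. The second bullet of Theorem~\ref{thm:jarafi} states precisely that $\chi(K(7,3,1))\le 9$, so combining the two inequalities yields $\chi(Q_{n}\pow{n-3})\le 9+6=15$, which is the claim.

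First I would recall the chain of inequalities inside Theorem~\ref{thm:n_n-3}: the left-hand bound is $\chi(J(n,(n-3)/2,0))+\chi(J(n,(n-1)/2,1))+1$ and the right-hand bound replaces this by $\chi(K(7,3,1))+6$; the latter replacement is itself justified in the proof of that theorem via Theorem~\ref{thm:lovasz} (giving $\chi(J(n,(n-3)/2,0))=3$) together with the monotonicity chain $\chi(J(n,(n-1)/2,1))\le\chi(K(n,(n-1)/2,1))\le\chi(K(7,3,1))$ coming from Proposition~\ref{isojohnson} and Theorem~\ref{thm:jarafi}. So for the corollary itself there is essentially nothing to prove beyond quoting the right-hand side of Theorem~\ref{thm:n_n-3} and then invoking the explicit numerical bound $\chi(K(7,3,1))\le 9$ from Theorem~\ref{thm:jarafi}.

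The only mildly delicate point is bookkeeping: one must make sure that the hypothesis ``$n\ge 5$ odd'' used in Theorem~\ref{thm:n_n-3} matches the hypothesis of the corollary (it does, verbatim), and that the parameters entering $K(7,3,1)$ fall in the admissible range $0<i+1<k<n$ of Theorem~\ref{thm:jarafi}, i.e.\ $i=1$, $k=3$, $n=7$, so $0<2<3<7$ holds. I would also note in passing that this bound $15$ improves on the generic bound $2^{\lceil\log_2(1+\binom{n-1}{d-1})\rceil}$ of the Fu--Fu--Li--Xie theorem for the relevant small cases (e.g.\ it gives the entry for $Q_9\pow 6$ in Table~\ref{sum}).

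There is no real obstacle here; the content of the corollary is entirely absorbed into Theorem~\ref{thm:n_n-3} and Theorem~\ref{thm:jarafi}. If anything, the ``hard part'' was already done in proving Theorem~\ref{thm:n_n-3} (the careful layer-by-layer coloring argument splitting $Q_n\pow{n-3}$ into contributions from $L_{(n-3)/2}$ and $L_{(n+1)/2}$ plus the re-use of colors on the extremal layers) and in establishing the monotonicity of $\chi(K(n,k,i))$ along the shift $(n,k)\mapsto(n+2,k+1)$. Thus the proof of the corollary is a one-line deduction: apply Theorem~\ref{thm:n_n-3}, then Theorem~\ref{thm:jarafi}, and conclude $\chi(Q_{n}\pow{n-3})\le 15$.
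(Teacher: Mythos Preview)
Your proof is correct and matches the paper's approach exactly: combine the right-hand bound of Theorem~\ref{thm:n_n-3} with the bound $\chi(K(7,3,1))\le 9$ from Theorem~\ref{thm:jarafi}. One small slip in your aside: by Theorem~\ref{thm:lovasz}, $\chi(J(n,(n-3)/2,0))=n-2\cdot\tfrac{n-3}{2}+2=5$, not $3$ (and Proposition~\ref{isojohnson} is not needed for the monotonicity chain), but this does not affect the corollary since you correctly invoke the already-established bound $\chi(K(7,3,1))+6$.
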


\begin{theorem}
\label{thm:n_n-4}
If $n\ge 6$ is an even positive integer, then 
$$\begin{array}{lcl} 
\chi(Q_{n}\pow{n-4}) &\le&  \min\{2\chi(J(n,(n-4)/2,0))+\chi(J(n,n/2,2)),  2\chi(J(n,(n-2)/2,1))+2 \} \\[3mm]
&\le& 2\chi(K(8,3,1))+2.
\end{array}$$
\end{theorem}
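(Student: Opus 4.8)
The plan is to follow the template of the proofs of Theorems~\ref{thm:n_n-2} and~\ref{thm:n_n-3}: I would build two proper colorings of $Q_{n}\pow{n-4}$ and keep the smaller one. Since $n-4$ is even, Remark~\ref{hyprem} gives that $Q_{n}\pow{n-4}$ consists of two isomorphic connected components (an isomorphism is obtained by flipping one fixed coordinate of $Q_n$, which swaps the two weight-parity classes), so it suffices to color one component; being isomorphic, \emph{either} component may be used, and hence both counts obtained below bound $\chi(Q_{n}\pow{n-4})$. By Proposition~\ref{hypeven} together with Proposition~\ref{isojohnson}, in $Q_{n}\pow{n-4}$ the level $L_{n/2}$ induces $J(n,n/2,2)$, each of $L_{(n-4)/2}$ and $L_{(n+4)/2}$ induces $J(n,(n-4)/2,0)$ (for the latter via $J(n,(n+4)/2,4)\cong J(n,(n-4)/2,0)$), and each of $L_{(n-2)/2}$ and $L_{(n+2)/2}$ induces $J(n,(n-2)/2,1)$; the first three of these lie in one component and the last two in the other.

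For the bound $2\chi(J(n,(n-4)/2,0))+\chi(J(n,n/2,2))$ I would color the component containing $L_{n/2}$, coloring the three central levels $L_{(n-4)/2}$, $L_{n/2}$, $L_{(n+4)/2}$ properly with three pairwise disjoint palettes of sizes $\chi(J(n,(n-4)/2,0))$, $\chi(J(n,n/2,2))$, $\chi(J(n,(n-4)/2,0))$. Using that adjacency of $A$ and $B$ in $Q_{n}\pow{n-4}$ forces $|A\cap B|=(|A|+|B|-(n-4))/2$ subject to $0\le|A\cap B|\le\min(|A|,|B|)$ and $|A\cup B|\le n$, one checks that every other level of this component is an independent set, and that a vertex with fewer than $(n-4)/2$ ones has all its neighbors among vertices with at least $n/2$ ones — in particular none in $L_{(n-4)/2}$ — with the mirror statement on the other side. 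Hence all vertices with fewer than $(n-4)/2$ ones can reuse a single color from the palette of $L_{(n-4)/2}$ and all vertices with more than $(n+4)/2$ ones a single color from the palette of $L_{(n+4)/2}$, and the result is a proper coloring with $2\chi(J(n,(n-4)/2,0))+\chi(J(n,n/2,2))$ colors.

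For the bound $2\chi(J(n,(n-2)/2,1))+2$ I would color the other component, whose two central levels $L_{(n-2)/2}$ and $L_{(n+2)/2}$ each induce $J(n,(n-2)/2,1)$; color them properly with two disjoint palettes of size $\chi(J(n,(n-2)/2,1))$. The same estimate shows that every other level is an independent set, and that the whole set of vertices with fewer than $(n-2)/2$ ones, as well as the whole set of those with more than $(n+2)/2$ ones, is an independent set; but this time two such sets can be disjoint while still at distance $|A|+|B|=n-4$, so an outer vertex may be adjacent to \emph{both} central levels and no palette can be recycled. I would therefore spend one new color on all vertices with fewer than $(n-2)/2$ ones and one new color on all vertices with more than $(n+2)/2$ ones, giving a proper coloring with $2\chi(J(n,(n-2)/2,1))+2$ colors. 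Combining the two colorings yields the asserted minimum.

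For the final inequality, $J(n,(n-2)/2,1)$ is a subgraph of $K(n,(n-2)/2,1)=K(2k+2,k,1)$ with $k=(n-2)/2$, so for $n\ge 8$ (hence $k\ge 3$) Theorem~\ref{thm:jarafi} yields $\chi(J(n,(n-2)/2,1))\le\chi(K(8,3,1))$ and therefore $2\chi(J(n,(n-2)/2,1))+2\le 2\chi(K(8,3,1))+2$; the one remaining value $n=6$ is handled directly. I expect the level-adjacency bookkeeping in the two middle paragraphs to be the crux: one must determine exactly which pairs of levels of $Q_{n}\pow{n-4}$ are joined by an edge, since this is what decides whether the outer levels can recycle a palette (in the component of $L_{n/2}$) or need new colors (in the other component), and one must check separately that the various level ranges do not degenerate for the smallest admissible value $n=6$.
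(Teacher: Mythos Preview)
Your proposal is correct and follows essentially the same approach as the paper: build two colorings, one per component, by properly coloring the central levels via the generalized Johnson graphs they induce and then handling the outer levels according to the level-adjacency constraints, and finish the second inequality via Theorem~\ref{thm:jarafi}. Your write-up is in fact more careful than the paper's own proof, which in the second part omits explicit mention of coloring $L_{(n+2)/2}$ and contains several evident typos (e.g.\ ``$n-6$ bits'' for ``$n-4$ bits'', and the non-integer $(n-3)/2$); your level-adjacency bookkeeping fills exactly those gaps.
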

\begin{proof}
Note that one connected component of $Q_{n}\pow{n-4}$ contains the vertices of both $L_{(n-4)/2}$, $L_{n/2}$ and $L_{(n+4)/2}$ and the other one the vertices of both $L_{(n-2)/2}$ and $L_{(n+2)/2}$.  
First, we begin by proving that $\chi(Q_{n}\pow{n-4})\le 2\chi(J(n,(n-4)/2,0))+\chi(J(n,n/2,2))$.
By Proposition \ref{hypeven}, the vertices from $L_{(n-4)/2}$ induce the graph $J(n,(n-4)/2,0)$, the vertices from $L_{(n-2)/2}$ induce the graph $J(n,(n-2)/2,1)$ and the vertices from $L_{n/2}$ induce the graph $J(n,n/2,2)$. By Proposition \ref{isojohnson}, $J(n,(n-4)/2,0)\cong J(n,(n+4)/2,4)$ and $J(n,(n-2)/2,1)\cong J(n,(n+2)/2,3)$. Consequently, it is possible to color the vertices of $L_{(n-4)/2}$, $L_{n/2}$, and $L_{(n+4)/2}$ with $2\chi(J(n,(n-4)/2,0))+\chi(J(n,n/2,2))$ colors.
Note that for vertices $u$ and $v$ differing in exactly $n-4$ bits, $u\in L_{i}$, for $i\le (n-4)/2$, we have $v\in L_{j}$ for  $j\ge(n-4)/2$.
Consequently there is no edge between two vertices with less than $(n-4)/2$ bits $1$. Similarly, there is no edge between two vertices with more than $(n+4)/2$ bits $1$. Finally, it is possible to re-use a color used for $L_{(n-4)/2}$ to color all the vertices in $L_i$, for $i< (n-2)/2$  and to re-use a color used for $L_{(n+4)/2}$ to color all the vertices in $L_i$, for $i> (n+2)/2$.

Second, we prove that $\chi(Q_{n}\pow{n-4})\le 2\chi(J(n,(n-2)/2,1))+2$.
It is possible to color the vertices of $L_{(n-2)/2}$ with $\chi(J(n,(n-2)/2,1))$ colors.
Note that for every two vertices $u$ and $v$ differing in exactly $n-6$ bits, $u\in L_{i}$, for $i\le (n-6)/2$, we have $v\in L_{j}$ for $j\ge (n-3)/2$.
Consequently there is no edge between two vertices with less than $(n-6)/2$ bits $1$. Similarly, there is no edge between two vertices with more than $(n+6)/2$ bits $1$. Finally, we use two new colors to color all the vertices in $L_i$, for $i\le (n-6)/2$, with the same color and to color all the vertices in $L_i$, for $i\ge (n+4)/2$ with the same color.
\end{proof}

Combining Proposition~\ref{prp:newbound} with Theorem~\ref{thm:n_n-4} we get

\begin{corollary}
If $n\ge 6$ is an even positive integer, then $\chi(Q_{n}\pow{n-4})\le  26$.
\end{corollary}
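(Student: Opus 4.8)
The plan is to obtain this as an immediate corollary of the two preceding results, so essentially all of the work has already been done. Theorem~\ref{thm:n_n-4} already establishes, for every even $n\ge 6$, that
$$\chi(Q_n\pow{n-4})\ \le\ 2\chi(K(8,3,1))+2,$$
the passage from the generalized Johnson graphs appearing in the layered coloring to the single generalized Kneser graph $K(8,3,1)$ being handled by the containment $J(n,k,i)\subseteq K(n,k,i)$ together with the reduction $\chi(K(2k+2,k,i))\le\chi(K(8,3,1))$ of Theorem~\ref{thm:jarafi}. Hence the only thing still needed is a concrete numerical upper bound on $\chi(K(8,3,1))$.

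That is precisely the content of Proposition~\ref{prp:newbound}, which exhibits an explicit proper $12$-coloring of $K(8,3,1)$ and so gives $\chi(K(8,3,1))\le 12$. Substituting this into the displayed inequality yields
$$\chi(Q_n\pow{n-4})\ \le\ 2\cdot 12+2\ =\ 26$$
for every even $n\ge 6$, which is the claim.

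Since both ingredients are already in hand, I expect no genuine obstacle: the corollary is a one-line substitution, and the conceptual effort sits entirely in Theorem~\ref{thm:n_n-4} (the layered-coloring reduction to small Johnson/Kneser graphs) and in Proposition~\ref{prp:newbound} (the hand-crafted $12$-coloring). The only place worth a brief check is the boundary value $n=6$, where $n-4=2$ and the relevant Johnson graph is $J(6,2,1)$, the line graph of $K_6$, with chromatic number $5$; but Theorem~\ref{thm:n_n-4} is stated for all even $n\ge 6$, so this case is already subsumed, and indeed $2\cdot 5+2=12\le 26$ as well.
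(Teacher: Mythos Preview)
Your proposal is correct and follows exactly the paper's approach: the corollary is obtained by substituting the bound $\chi(K(8,3,1))\le 12$ from Proposition~\ref{prp:newbound} into the inequality $\chi(Q_n\pow{n-4})\le 2\chi(K(8,3,1))+2$ of Theorem~\ref{thm:n_n-4}. The extra sanity check at $n=6$ is harmless but unnecessary, since Theorem~\ref{thm:n_n-4} already covers that case.
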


\section*{Acknowledgement}

This work was performed with the financial support of the bilateral project "Distance-constrained and game colorings of graph products" (BI-FR/18-19-Proteus-011).

B.B. and S.K. acknowledge the financial support from the Slovenian Research Agency (research core funding No.\ P1-0297 and project Contemporary invariants in graphs No.\ J1-9109).

\end{document}